\documentclass[11pt]{article}

\usepackage[hidelinks]{hyperref}
\hypersetup{
  colorlinks   = true, 
  urlcolor     = blue, 
  linkcolor    = blue, 
  citecolor   = red 
}
\usepackage{amsmath,amsthm,amssymb}
\usepackage{graphicx}
\usepackage{bbm}
\newcommand{\sy}{\boldsymbol{\Psi}}
\newcommand{\py}{\boldsymbol{\Phi}}
\newcommand{\T}{\mathbb{T}^N}

\usepackage{xcolor}
\usepackage[margin=1in]{geometry} 
\setlength{\headheight}{15pt}
\usepackage{enumerate,mathtools,mathrsfs,bm,graphicx}
\usepackage[numbers, super]{natbib}
\usepackage[hidelinks]{hyperref}
\newcommand{\N}{\mathbb{N}}									

\newcommand{\R}{\mathbb{R}}

\newcommand{\vertiii}[1]{{\left\vert\kern-0.25ex\left\vert\kern-0.25ex\left\vert #1 
    \right\vert\kern-0.25ex\right\vert\kern-0.25ex\right\vert}}
    

\newcommand{\overbar}[1]{\mkern 1.5mu\overline{\mkern-1.5mu#1\mkern-1.5mu}\mkern 1.5mu}
\newcommand{\inner}[2]{\left\langle #1, #2 \right\rangle}
\newcommand{\floor}[1]{\left \lfloor{#1}\right \rfloor}		
\DeclarePairedDelimiter\abs{\lvert}{\rvert}					
\newcommand{\norm}[1]{\left\Vert #1 \right\Vert}

\newtheorem{theorem}{Theorem}[section]
\newtheorem{corollary}{Corollary}[theorem]

\newtheorem{lemma}[theorem]{Lemma}
\newtheorem{proposition}[theorem]{Proposition}
\newtheorem{remark}[theorem]{Remark}
\newtheorem{definition}[theorem]{Definition}

\newtheorem{assumption}[theorem]{Assumption}

\begin{document}
	\title{High Order Smoothness for Stochastic Navier-Stokes Equations with Transport and Stretching Noise on Bounded Domains}
	\author{Daniel Goodair\footnote{EPFL, 
    daniel.goodair@epfl.ch. Supported
    by the EPSRC Project 2478902.}}
	\date{\today} 
	\maketitle
\setcitestyle{numbers}	
\thispagestyle{empty}
\begin{abstract}
    We obtain energy estimates for a transport and stretching noise under Leray Projection on a 2D bounded convex domain, in Sobolev Spaces of arbitrarily high order. The estimates are taken in equivalent inner products, defined through powers of the Stokes Operator with a specific choice of Navier boundary conditions. We exploit fine properties of the noise in relation to the Stokes Operator to achieve cancellation of derivatives in the presence of the Leray Projector. As a result, we achieve an additional degree of regularity in the corresponding Stochastic Navier-Stokes Equation to attain a true strong solution of the original Stratonovich equation. Furthermore for any order of smoothness, we can construct a strong solution of a hyperdissipative version of the Stochastic Navier-Stokes Equation with the given regularity; hyperdissipation is only required to control the nonlinear term in the presence of a boundary. We supplement the result by obtaining smoothness without hyperdissipation on the torus, in 2D and 3D on the lifetime of solutions. 
\end{abstract}
	
\tableofcontents
\textcolor{white}{Hello}
\thispagestyle{empty}
\newpage

\setcounter{page}{1}
\addcontentsline{toc}{section}{Introduction}

\section*{Introduction}

We are concerned with the high order smoothness of solutions to the 2D incompressible Navier-Stokes equation under Stochastic Advection by Lie Transport (SALT) introduced in [\cite{holm2015variational}], given by
\begin{equation} \label{number2equationSALT}
    u_t - u_0 + \int_0^t\mathcal{L}_{u_s}u_s\,ds - \nu\int_0^t \Delta u_s\, ds + \int_0^t B(u_s) \circ d\mathcal{W}_s + \nabla \rho_t= 0
\end{equation}
where $u$ represents the fluid velocity, $\rho$ the pressure\footnote{The pressure term is a semimartingale, and an explicit form for the SALT Euler Equation is given in [\cite{street2021semi}] Subsection 3.3}, $\mathcal{W}$ is a Cylindrical Brownian Motion, $\mathcal{L}$ represents the nonlinear term and $B$ is a first order differential operator (the SALT Operator) properly introduced in Subsection \ref{sub salt}. This injection of noise into the system follows a variational principle as presented in [\cite{holm2015variational}], adding uncertainty in the transport of fluid parcels to reflect the unresolved scales. Indeed, the physical significance of such transport and advection noise in modelling, numerical analysis and data assimilation continues to be well documented, see [\cite{alonso2020modelling}, \cite{cotter2020data}, \cite{cotter2018modelling}, \cite{cotter2019numerically}, \cite{crisan2021theoretical}, \cite{dufee2022stochastic}, \cite{flandoli20212d}, \cite{flandoli2022additive}, \cite{holm2020stochastic}, \cite{holm2019stochastic}, \cite{lang2022pathwise}, \cite{van2021bayesian}, \cite{street2021semi}] as well as the recent book [\cite{flandoli2023stochastic}]. Intrinsic to this stochastic methodology is that $B$ is defined relative to a collection of functions $(\xi_i)$ which physically represent spatial correlations. These $(\xi_i)$ can be determined at coarse-grain resolutions from finely resolved numerical simulations, and mathematically are derived as eigenvectors of a velocity-velocity correlation matrix (see [\cite{cotter2020data}, \cite{cotter2018modelling}, \cite{cotter2019numerically}]).\\

The existence of strong solutions to (\ref{number2equationSALT}) on the torus is well understood, see [\cite{agresti2024stochastic}, 
  \cite{alonso2020well}, \cite{crisan2019solution},  \cite{crisan2021theoretical}, \cite{crisan2023well}, \cite{crisan2022spatial}, \cite{lang2023well}, \cite{lang2023analytical}, \cite{flandoli2021delayed}, \cite{tang2023stochastic}] for the stochastic Navier-Stokes equation and related models with transport and advection noise. On a bounded domain, the situation is drastically different. Prior to the author's work [\cite{goodair2023navier}, \cite{goodair20233d}], we are only aware of one existence result for analytically strong solutions to a fluid equation perturbed by a transport type noise. This was given in [\cite{brzezniak2021well}] where the authors assume that the gradient dependency is small relative to the viscosity, which is necessary in the It\^{o} case but avoids the technical complications of demanding a cancellation of derivatives in the presence of the boundary. For a Stratonovich transport noise such an assumption may not be necessary, as conversion to It\^{o} form yields a corrector term which may provide this cancellation of the top order derivative arising from the noise in energy estimates. This type of estimate has been well understood since the works [\cite{gyongy1989approximation}, \cite{gyongy2003splitting}, \cite{gyongy1992stochastic}] and was demonstrated for SALT noise in the presence of a boundary in [\cite{goodair20233d}]. The issue on a bounded domain is that such a control does not practically transfer to energy estimates of the stochastic fluid equation. As is classical, we work with a projected form of the equation \begin{equation} \label{projected strato Salt}
    u_t = u_0 - \int_0^t\mathcal{P}\mathcal{L}_{u_s}u_s\ ds - \nu\int_0^t A u_s\, ds  - \int_0^t \mathcal{P}Bu_s \circ d\mathcal{W}_s 
\end{equation}
to relieve ourselves of the pressure, where $\mathcal{P}$ is the Leray Projector onto the space of divergence-free functions with zero normal boundary component, and $A = -\mathcal{P}\Delta$. These operators and their properties are more thoroughly introduced in Subsection \ref{functional framework subsection}. Therefore one must obtain sufficient estimates on the noise under Leray Projection, inconsequential in $L^2$ however significant in higher order spaces as the Leray Projector is no longer self-adjoint. This elucidates the difficulty in recovering analytically strong solutions as opposed to weak solutions, as the energy norm of weak solutions arises from $L^2$ estimates. For existence and uniqueness results of weak solutions in two and three dimensions on a bounded domain, we refer the reader to [\cite{goodair2023navier}, \cite{goodair2023zero}, \cite{mikulevicius2005global}]. Moreover the contrast between torus and bounded domain is indicated, as on the torus the Leray Projector commutes with derivatives so is in fact self-adjoint in high order Sobolev Spaces. Its presence, therefore, does not affect the traditional estimates and cancellation for transport noise in such spaces.\\

In fact, the problem is deeper for the Navier-Stokes equation and viscous models more generally. Precision is required to extract the gain in regularity from the viscous term, restricting our energy estimates to be taken in inner products of powers of the Stokes Operator arising from its spectral theory. In many cases there is a subtlety to this. The equivalent $W^{1,2}$ inner product, begetting strong solutions, is simply the homogeneous counterpart both on the torus and for the classical no-slip boundary condition (where $u = 0$ on the boundary $\partial \mathscr{O}$). On the torus, the equivalence is true even for higher order Sobolev Spaces: one can see [\cite{robinson2016three}] Section 2.3 for these facts. The equivalence however, and more so our ability to use these inner products at all, is only valid in the right domain. For the torus this is always the divergence-free and zero-mean subspace, which is mapped to in Leray Projection so is automatically valid. On a bounded domain we require the divergence-free subspace with some boundary condition, one that must often be significantly more restrictive than the actual boundary condition required of the solution. For example with the no-slip boundary, the best sufficient condition that we have for the corresponding $W^{m,2}$ subspace is for the function to belong to $W^{m-1,2}_0$: far stronger than the $W^{1,2}_0$ requirement of the solution.\\

Even in the base regularity of strong solutions in $W^{1,2}$, this is a significant and presently insurmountable problem for the no-slip boundary condition. Transport noise aside, simply for a general Lipschitz multiplicative noise the existence of analytically strong solutions is unknown. This case is often considered resolved due to the work of Glatt-Holtz and Ziane in [\cite{glatt2009strong}], where indeed the existence of analytically strong solutions for a Lipschitz multiplicative noise is shown, however this noise is taken as a mapping from $W^{1,2}_{\sigma}$ (the subspace of divergence-free, zero-trace functions) into itself. If we were to take $B$ in (\ref{number2equationSALT}) as mapping $W^{1,2}$ into even compactly supported functions, we would still \textit{not} expect $\mathcal{P}B$ to map from $W^{1,2}_{\sigma}$ into itself: this is due to the fact that the Leray Projector destroys the zero-trace property in general. Thus, this assumption of Glatt-Holtz and Ziane is far more restrictive than a bounded noise at the level of the true equation (\ref{number2equationSALT}). Moreover we could remove the transport term in our SALT noise, leaving only a bounded stretching term, and this would not satisfy the assumptions in [\cite{glatt2009strong}]. The issue of noise not retaining the central $W^{1,2}_{\sigma}$ space has also been recognised in [\cite{agresti2024critical}] Remark 5.6, preventing strong solutions from being obtained in the variational framework.\\

Fortunately, a change to Navier boundary conditions facilitated the existence of analytically strong solutions in the author's paper [\cite{goodair2023navier}]; these are
\begin{equation} \label{navier boundary conditions}
    u \cdot \mathbf{n} = 0, \qquad 2(Du)\mathbf{n} \cdot \mathbf{\iota} + \alpha u\cdot \mathbf{\iota} = 0
\end{equation} 
where $\mathbf{n}$ is the unit outwards normal vector, $\mathbf{\iota}$ the unit tangent vector, $Du$ is the rate of strain tensor $(Du)^{k,l}:=\frac{1}{2}\left(\partial_ku^l + \partial_lu^k\right)$ and $\alpha \in C^2(\partial \mathscr{O};\R)$ represents a friction coefficient which determines the extent to which the fluid slips on the boundary relative to the tangential stress. These conditions were first proposed by Navier in [\cite{navier1822memoire}, \cite{navier1827lois}], and have been derived in [\cite{maxwell1879vii}] from the kinetic theory of gases and in [\cite{masmoudi2003boltzmann}] as a hydrodynamic limit. Furthermore they have proven viable for modelling rough boundaries as seen in [\cite{basson2008wall}, \cite{gerard2010relevance}, \cite{pare1992existence}]. The derivatives appearing in (\ref{navier boundary conditions}) mean that the space of functions satisfying Navier boundary conditions is not closed in $W^{1,2}$, such that the $W^{1,2}$ space induced from the spectral theory of the Stokes Operator does not necessitate the full Navier boundary conditions. This space, which we call $\bar{W}^{1,2}_{\sigma}$, is simply the subspace of divergence-free functions with zero normal boundary component. Furthermore the Leray Projector maps into this space, overcoming the issues imposed by the no-slip condition.\\

From the described picture, one can see the difficulty of contemplating higher order smoothness. As soon as we pass up to $W^{2,2}$, the full condition (\ref{navier boundary conditions}) would be necessary. As described in the no-slip setting, higher regularity enforces even more stringent boundary requirements. To solve this, we take a specific choice of the Navier boundary conditions owing to their relation with vorticity. Indeed if $w$ is the vorticity of the fluid and $\kappa \in C^2(\partial \mathscr{O};\R)$ denotes the curvature of the boundary then the conditions (\ref{navier boundary conditions}) are equivalent to \begin{equation} \label{its a new rep}
    u \cdot \mathbf{n} = 0, \qquad w = (2\kappa - \alpha)u \cdot \iota.
\end{equation}
The choice $\alpha = 2\kappa$, known as the `free boundary', is particularly appealing. The condition has been central to inviscid limit theory, stemming from [\cite{lions1969quelques}] in the deterministic setting and shown by the author in [\cite{goodair2023navier}] for this stochastic system. In the present work we exploit this characterisation of the boundary condition in terms of the curl, coupled with the property that the curl passes through the Leray Projector to make the boundary value of projected functions truly tractable. In particular, whilst choice of the driving spatial correlation $\xi_i$ to approach zero quickly at the boundary does not ensure that the corresponding noise $\mathcal{P}B_iu$ is zero at the boundary, it does ensure that $\textnormal{curl}\left(\mathcal{P}B_iu \right)$ is zero at the boundary hence satisfying Navier boundary conditions for $\alpha = 2\kappa$. For higher orders it becomes sufficient to prove that $\textnormal{curl}\left(A^k\mathcal{P}B_iu\right)$ is zero on the boundary, which follows similarly.\\

Of course we have only provided the idea as to why the noise is simply valued in the right space; obtaining the necessary cancellation of derivatives in energy estimates remains an obstacle. In essence we are tasked to control a term of the form $$ \inner{A^k\mathcal{P}B_i^2u}{A^ku} + \norm{A^{k}\mathcal{P}B_iu}^2.$$
These terms arise in energy estimates from the It\^{o}-Stratonovich corrector and quadratic variation of the stochastic integral respectively. Even for the torus, we believe that such an estimate is new. We are only aware of high order estimates for transport type noise in specific cases; firstly for the vorticity form, for example in [\cite{lang2023well}], where the Leray Projector is not present and hence the usual $W^{k,2}$ inner product can be used, and secondly where very strong and unexpected conditions for the full commutativity between $A$ and $B_i$ are assumed, for example in [\cite{crisan2022spatial}] (3.6). The evident concern is that presence of the Leray Projector blocks our ability to cancel derivatives with the densely defined adjoint $B_i^*$, containing a negative of the transport term. Our approach relies strongly on the structure of the transport and advection term $B_i$ as preserving gradients, facilitating the property that $\mathcal{P}B_i=\mathcal{P}B_i\mathcal{P}$, and iterating commutator estimates of $B_i$ and $\Delta$. Use of the commutator estimates does not provide complete cancellation of the derivative, only `half' of it, though the remaining derivative dependency can be isolated to be with an arbitrarily small constant. In particular, one can use the viscous term to control it and close the estimates.\\

As a result of these estimates we obtain, for the first time with a boundary, the existence of a strong solution to the true Stratonovich equation (\ref{projected strato Salt}). Despite the profusion of recent works around Stratonovich transport type noise, results have been almost exclusively for a corresponding It\^{o} form of the equation with only a conceptual understanding of their equivalence. Due to the unbounded nature of the noise, making this equivalence rigorous comes at the cost of an additional derivative; therefore to actually obtain a solution of the Stratonovich form, the form which is physically motivated, one must show an extra degree of regularity in the It\^{o} form. The rigorous conversion was demonstrated in [\cite{goodair2022stochastic}], see Theorem \ref{theorem for ito strat conversion} in the appendix. Therefore, pathwise regularity in $C\left([0,T];W^{2,2}\right) \cap L^2\left([0,T];W^{3,2}\right)$ is required to solve the Stratonovich form in $L^2$, which is the content of Theorem \ref{Stratotheorem2}. For this and our other results, the technical restriction that the domain must be convex (that is, the curvature $\kappa$ is non-negative at all points of the boundary) is required. This owes to the fact for $j$ odd, the inner product $\inner{A^{\frac{j}{2}}\cdot}{A^{\frac{j}{2}}\cdot}$ is only shown to be equivalent to usual $W^{j,2}$ inner product for $\alpha \geq \kappa$, but as we choose $\alpha = 2\kappa$ then we need $\kappa \geq 0$.\\

Our higher order noise estimates may suggest that we can construct solutions of (\ref{projected strato Salt}) in any $W^{k,2}$ space provided the initial condition is equally regular. We have not quite achieved this, suffering from an insufficient control on the Galerkin Projection of the nonlinear term. Indeed if the Galerkin Projections were uniformly bounded in $W^{k,2}$ on the range of the nonlinear term then we would have success, though we draw attention to the fact that these projections are not uniformly bounded in general; this has certainly been missed in some texts. A counterexample is given in the appendix, Lemma \ref{lemma for explosion of galerkin}. Furthermore, it seems to the author that the expected high order energy estimates for the deterministic system are actually not known on a bounded domain. We appreciate that interior regularity estimates are classical, see for example [\cite{fujita1964navier}, \cite{heywood1980navier}]. As an application of the high order noise estimates we instead construct regular solutions of a \textit{hyperdissipative} equation,
\begin{equation} \label{number2equationSALT hyper}
    u_t - u_0 + \int_0^t\mathcal{L}_{u_s}u_s\,ds + \nu\int_0^t (-\Delta)^{\beta} u_s\, ds + \int_0^t B(u_s) \circ d\mathcal{W}_s + \nabla \rho_t= 0
\end{equation}
where $\beta > 1$, $\beta \in \N$, or more precisely its projected version 
\begin{equation} \label{projected strato Salt hyperdiss}
  u_t = u_0 - \int_0^t\mathcal{P}\mathcal{L}_{u_s}u_s\ ds - \nu\int_0^t A^\beta u_s\, ds  - \int_0^t \mathcal{P}Bu_s \circ d\mathcal{W}_s
\end{equation}
where the property that $\mathcal{P}(-\Delta)^\beta = A^{\beta}$ is discussed in Subsection \ref{functional framework subsection}. It is stressed once more that hyperdissipation is only required to control the nonlinear term under Galerkin Projection, and not to control the noise. We illustrate this by showing that solutions of (\ref{projected strato Salt}) on the 2D or 3D torus retain the smoothness of the initial condition, which still appears relevant as the the estimates on the noise are new even on the torus. For the bounded domain the size of $\beta$ will be dependent on the desired regularity. In summary, our main results are:
\begin{enumerate}
    \item Let $\mathscr{O}$ be a 2D smooth bounded convex domain, with given $u_0$ in $W^{2,2}$ satisfying Navier boundary conditions (\ref{navier boundary conditions}) for $\alpha = 2\kappa$. Then there exists a process $u$ which belongs pathwise to $C\left([0,T];W^{2,2}\right) \cap L^2\left([0,T];W^{3,2}\right)$ and satisfies (\ref{projected strato Salt}) in $L^2$; see Theorem \ref{Stratotheorem2} for the complete statement.

    \item \label{labelous} Let $\mathscr{O}$ be a 2D smooth bounded convex domain, with given $u_0$ in $W^{m,2}$ for $m \in \N$, $m \geq 2$, satisfying Navier boundary conditions (\ref{navier boundary conditions}) for $\alpha = 2\kappa$ and further belonging to the domain of $A^{\frac{m}{2}}$. Then there exists a process $u$ which belongs pathwise to $C\left([0,T];W^{m,2}\right) \cap L^2\left([0,T];W^{2m-1,2}\right)$ and satisfies (\ref{projected strato Salt hyperdiss}) for $\beta = m-1$ in $L^2$; see Theorem \ref{Stratotheorem1} for the complete statement.
\end{enumerate}

Of course the first result is a special case of the second. We detail the plan of the paper now.

\begin{itemize}
    \item Section \ref{section preliminaries} introduces some elementary notation as well as classical results in the study of Navier-Stokes equations with Navier boundary conditions. The diffusion operator $B$ is properly introduced with its essential properties presented.

    \item Section \ref{section high order estimates} proves the high order noise estimates in spaces defined through the spectrum of the Stokes Operator. Characteristics of these spaces are thoroughly examined. We then prove the existence of solutions to the `fully hyperdissipative' case of (\ref{projected strato Salt hyperdiss}), where $\beta = m$ instead of $m-1$ in the previous item \ref{labelous}. Solutions are obtained through a variational framework developed in [\cite{goodair2024weak}].

    \item Section \ref{section existence of strong solutions} contains the statements and proofs of the main results. The key step is in demonstrating improved estimates to reduce the required hyperdissipation from the existence result of Section \ref{section high order estimates}. The It\^{o}-Stratonovich conversion is efficiently demonstrated due to the abstract result of [\cite{goodair2022stochastic}].

    \item Several appendices conclude this paper: Section \ref{section appendix I} presents the variational framework used to obtain solutions of the fully hyperdissipative equation, Section \ref{section maximal solution} states a second framework used to deduce higher order smoothness of solutions, Section \ref{Appendix II} gives the rigorous It\^{o}-Stratonovich conversion for an unbounded noise in infinite dimensions, and Section \ref{last appendix} contains the short proof that the Galerkin Projections explode unless restricted to a domain satisfying the boundary condition. 
\end{itemize}

\section{Preliminaries} \label{section preliminaries}

\subsection{Elementary Notation} \label{sub elementary notation}

In the following $\mathscr{O} \subset \R^2$ will be a smooth bounded domain endowed with Euclidean norm and Lebesgue measure $\lambda$. We consider Banach Spaces as measure spaces equipped with their corresponding Borel $\sigma$-algebra. Let $(\mathcal{X},\mu)$ denote a general topological measure space, $(\mathcal{Y},\norm{\cdot}_{\mathcal{Y}})$ and $(\mathcal{Z},\norm{\cdot}_{\mathcal{Z}})$ be separable Banach Spaces, and $(\mathcal{U},\inner{\cdot}{\cdot}_{\mathcal{U}})$, $(\mathcal{H},\inner{\cdot}{\cdot}_{\mathcal{H}})$ be general separable Hilbert spaces. We introduce the following spaces of functions. 
\begin{itemize}
    \item $L^p(\mathcal{X};\mathcal{Y})$ is the class of measurable $p-$integrable functions from $\mathcal{X}$ into $\mathcal{Y}$, $1 \leq p < \infty$, which is a Banach space with norm $$\norm{\phi}_{L^p(\mathcal{X};\mathcal{Y})}^p := \int_{\mathcal{X}}\norm{\phi(x)}^p_{\mathcal{Y}}\mu(dx).$$ In particular $L^2(\mathcal{X}; \mathcal{Y})$ is a Hilbert Space when $\mathcal{Y}$ itself is Hilbert, with the standard inner product $$\inner{\phi}{\psi}_{L^2(\mathcal{X}; \mathcal{Y})} = \int_{\mathcal{X}}\inner{\phi(x)}{\psi(x)}_\mathcal{Y} \mu(dx).$$ In the case $\mathcal{X} = \mathscr{O}$ and $\mathcal{Y} = \R^2$ note that $$\norm{\phi}_{L^2(\mathscr{O};\R^2)}^2 = \sum_{l=1}^2\norm{\phi^l}^2_{L^2(\mathscr{O};\R)}, \qquad \phi = \left(\phi^1, \dots, \phi^2\right), \quad \phi^l: \mathscr{O} \rightarrow \R.$$ We denote $L^p(\mathscr{O};\R^2)$ by simply $L^p$.
    
\item $L^{\infty}(\mathcal{X};\mathcal{Y})$ is the class of measurable functions from $\mathcal{X}$ into $\mathcal{Y}$ which are essentially bounded. $L^{\infty}(\mathcal{X};\mathcal{Y})$ is a Banach Space when equipped with the norm $$ \norm{\phi}_{L^{\infty}(\mathcal{X};\mathcal{Y})} := \inf\{C \geq 0: \norm{\phi(x)}_Y \leq C \textnormal{ for $\mu$-$a.e.$ }  x \in \mathcal{X}\}.$$ We denote $L^\infty(\mathscr{O};\R^2)$ by simply $L^\infty$.

      \item $C(\mathcal{X};\mathcal{Y})$ is the space of continuous functions from $\mathcal{X}$ into $\mathcal{Y}$.

      
    \item $C^m(\mathscr{O};\R)$ is the space of $m \in \N$ times continuously differentiable functions from $\mathscr{O}$ to $\R$, that is $\phi \in C^m(\mathscr{O};\R)$ if and only if for every $2$ dimensional multi index $\alpha = \alpha_1, \alpha_2$ with $\abs{\alpha}\leq m$, $D^\alpha \phi \in C(\mathscr{O};\R)$ where $D^\alpha$ is the corresponding classical derivative operator $\partial_{x_1}^{\alpha_1} \partial_{x_2}^{\alpha_2}$.
    
    \item $C^\infty(\mathscr{O};\R)$ is the intersection over all $m \in \N$ of the spaces $C^m(\mathscr{O};\R)$.
    
    \item $C^m_0(\mathscr{O};\R)$ for $m \in \N$ or $m = \infty$ is the subspace of $C^m(\mathscr{O};\R)$ of functions which have compact support.
    
    \item $C^m(\mathscr{O};\R^2), C^m_0(\mathscr{O};\R^2)$ for $m \in \N$ or $m = \infty$ is the space of functions from $\mathscr{O}$ to $\R^2$ whose component mappings each belong to $C^m(\mathscr{O};\R), C^m_0(\mathscr{O};\R)$. These spaces are simply denoted by $C^m, C^m_0$ respectively.
    
        \item $W^{m,p}(\mathscr{O}; \R)$ for $1 \leq p < \infty$ is the sub-class of $L^p(\mathscr{O}, \R)$ which has all weak derivatives up to order $m \in \N$ also of class $L^p(\mathscr{O}, \R)$. This is a Banach space with norm $$\norm{\phi}^p_{W^{m,p}(\mathscr{O}, \R)} := \sum_{\abs{\alpha} \leq m}\norm{D^\alpha \phi}_{L^p(\mathscr{O}; \R)}^p,$$ where $D^\alpha$ is the corresponding weak derivative operator. In the case $p=2$ the space $W^{m,2}(\mathscr{O}, \R)$ is Hilbert with inner product $$\inner{\phi}{\psi}_{W^{m,2}(\mathscr{O}; \R)} := \sum_{\abs{\alpha} \leq m} \inner{D^\alpha \phi}{D^\alpha \psi}_{L^2(\mathscr{O}; \R)}.$$
    
    \item $W^{m,\infty}(\mathscr{O};\R)$ for $m \in \N$ is the sub-class of $L^\infty(\mathscr{O}, \R)$ which has all weak derivatives up to order $m \in \N$ also of class $L^\infty(\mathscr{O}, \R)$. This is a Banach space with norm $$\norm{\phi}_{W^{m,\infty}(\mathscr{O}, \R)} := \sup_{\abs{\alpha} \leq m}\norm{D^{\alpha}\phi}_{L^{\infty}(\mathscr{O};\R^2)}.$$
    
   \item $W^{s,p}(\mathscr{O}; \R)$ for $0<s<1$ and $1 \leq p < \infty$ is the sub-class of functions $\phi \in L^p(\mathscr{O}, \R)$ such that $$\int_{\mathscr{O} \times \mathscr{O}} \frac{\abs{\phi(x)-\phi(y)}^p}{\abs{x-y}^{sp+2}}d\lambda(x,y) < \infty.$$ This is a Banach space with respect to the norm $$\norm{\phi}_{W^{s,p}(\mathscr{O}; \R)}^p= \norm{\phi}_{L^p(U;\R)}^p + \int_{\mathscr{O} \times \mathscr{O}} \frac{\abs{\phi(x)-\phi(y)}^p}{\abs{x-y}^{sp+2}}d\lambda(x,y).$$ For $p=2$ this is a Hilbert space with inner product $$\inner{\phi}{\psi}_{W^{s,2}(\mathscr{O}; \R)} = \inner{\phi}{\psi}_{L^2(\mathscr{O};\R)}+ \int_{\mathscr{O} \times \mathscr{O}} \frac{\big(\phi(x)-\phi(y)\big)\big(\psi(x)-\psi(y)\big)}{\abs{x-y}^{2s+2}}d\lambda(x,y).$$
    
    \item $W^{s,p}(\mathscr{O}; \R)$ for $1 \leq s < \infty,$ $s \not\in \N$ and $1 \leq p < \infty$ is, using the notation $\floor{s}$ to mean the integer part of $s$, the sub-class of $W^{\floor{s},p}(\mathscr{O}; \R)$ such that the distributional derivatives $D^\alpha\phi$ belong to $W^{s-\floor{s},p}(\mathscr{O}; \R)$ for every multi-index $\alpha$ such that $\abs{\alpha}=\floor{s}.$ This is a Banach space with norm $$\norm{\phi}_{W^{s,p}(\mathscr{O}; \R)}^p= \norm{\phi}_{W^{\floor{s},p}(\mathscr{O};\R)}^p + \sum_{\abs{\alpha}=\floor{s}} \int_{\mathscr{O} \times \mathscr{O}} \frac{\abs{D^\alpha\phi(x)-D^\alpha\phi(y)}^p}{\abs{x-y}^{(s-\floor{s})p+2}}dxdy$$ and a Hilbert space in the case $p=2$, with inner product $$\inner{\phi}{\psi}_{W^{s,2}(\mathscr{O}; \R)} = \inner{\phi}{\psi}_{W^{\floor{s},2}(\mathscr{O};\R)}+ \int_{\mathscr{O} \times \mathscr{O}} \frac{\big(D^\alpha\phi(x)-D^\alpha\phi(y)\big)\big(D^\alpha\psi(x)-D^\alpha\psi(y)\big)}{\abs{x-y}^{2(s-\floor{s})+2}}d\lambda(x,y).$$

 \item $W^{s,p}(\mathscr{O}; \R^2)$ for $s > 0$ and $1 \leq p < \infty$ is the Banach space of functions $\phi:\mathscr{O} \rightarrow \R^2$ whose components $(\phi^l)$ are each elements of the space $W^{s,p}(\mathscr{O}; \R).$ We denote this space by simply $W^{s,p}$. The associated norm is $$
    \norm{\phi}_{W^{s,p}}^p= \sum_{l=1}^2\norm{\phi^l}^p_{W^{s,p}(\mathscr{O}; \R)}
    $$ and similarly when $p=2$ this space is Hilbert with inner product $$\inner{\phi}{\psi}_{W^{s,2}} = \sum_{l=1}^2\inner{\phi^l}{\psi^l}_{W^{s,2}(\mathscr{O}, \R)}.$$
    
          \item $W^{m,\infty}(\mathscr{O}; \R^2)$ is the sub-class of $L^\infty(\mathscr{O}, \R^2)$ which has all weak derivatives up to order $m \in \N$ also of class $L^\infty(\mathscr{O}, \R^2)$. This is a Banach space with norm $$\norm{\phi}_{W^{m,\infty}} := \sup_{l \leq N}\norm{\phi^l}_{W^{m,\infty}(\mathscr{O}; \R)}.$$

    \item $W^{m,p}_0(\mathscr{O};\R), W^{m,p}_0(\mathscr{O};\R^2)$ for $m \in \N$ and $1 \leq p \leq \infty$ is the closure of $C^\infty_0(\mathscr{O};\R),C^\infty_0(\mathscr{O};\R^2)$ in $W^{m,p}(\mathscr{O};\R), W^{m,p}(\mathscr{O};\R^2)$.

    \item $\mathscr{L}(\mathcal{Y};\mathcal{Z})$ is the space of bounded linear operators from $\mathcal{Y}$ to $\mathcal{Z}$. This is a Banach Space when equipped with the norm $$\norm{F}_{\mathscr{L}(\mathcal{Y};\mathcal{Z})} = \sup_{\norm{y}_{\mathcal{Y}}=1}\norm{Fy}_{\mathcal{Z}}$$ and is simply the dual space $\mathcal{Y}^*$ when $\mathcal{Z}=\R$, with operator norm $\norm{\cdot}_{\mathcal{Y}^*}.$
    
     \item $\mathscr{L}^2(\mathcal{U};\mathcal{H})$ is the space of Hilbert-Schmidt operators from $\mathcal{U}$ to $\mathcal{H}$, defined as the elements $F \in \mathscr{L}(\mathcal{U};\mathcal{H})$ such that for some basis $(e_i)$ of $\mathcal{U}$, $$\sum_{i=1}^\infty \norm{Fe_i}_{\mathcal{H}}^2 < \infty.$$ This is a Hilbert space with inner product $$\inner{F}{G}_{\mathscr{L}^2(\mathcal{U};\mathcal{H})} = \sum_{i=1}^\infty \inner{Fe_i}{Ge_i}_{\mathcal{H}}$$ which is independent of the choice of basis.



\end{itemize}

We next give the probabilistic set up. Let $(\Omega,\mathcal{F},(\mathcal{F}_t), \mathbbm{P})$ be a fixed filtered probability space satisfying the usual conditions of completeness and right continuity. We take $\mathcal{W}$ to be a cylindrical Brownian motion over some Hilbert Space $\mathfrak{U}$ with orthonormal basis $(e_i)$. Recall (e.g. [\cite{lototsky2017stochastic}], Definition 3.2.36) that $\mathcal{W}$ admits the representation $\mathcal{W}_t = \sum_{i=1}^\infty e_iW^i_t$ as a limit in $L^2(\Omega;\mathfrak{U}')$ whereby the $(W^i)$ are a collection of i.i.d. standard real valued Brownian Motions and $\mathfrak{U}'$ is an enlargement of the Hilbert Space $\mathfrak{U}$ such that the embedding $J: \mathfrak{U} \rightarrow \mathfrak{U}'$ is Hilbert-Schmidt and $\mathcal{W}$ is a $JJ^*-$cylindrical Brownian Motion over $\mathfrak{U}'$. Given a process $F:[0,T] \times \Omega \rightarrow \mathscr{L}^2(\mathfrak{U};\mathscr{H})$ progressively measurable and such that $F \in L^2\left(\Omega \times [0,T];\mathscr{L}^2(\mathfrak{U};\mathscr{H})\right)$, for any $0 \leq t \leq T$ we define the stochastic integral $$\int_0^tF_sd\mathcal{W}_s:=\sum_{i=1}^\infty \int_0^tF_s(e_i)dW^i_s,$$ where the infinite sum is taken in $L^2(\Omega;\mathscr{H})$. We can extend this notion to processes $F$ which are such that $F(\omega) \in L^2\left( [0,T];\mathscr{L}^2(\mathfrak{U};\mathscr{H})\right)$ for $\mathbbm{P}-a.e.$ $\omega$ via the traditional localisation procedure. In this case the stochastic integral is a local martingale in $\mathscr{H}$. \footnote{A complete, direct construction of this integral, a treatment of its properties and the fundamentals of stochastic calculus in infinite dimensions can be found in [\cite{prevot2007concise}] Section 2.}

\subsection{Functional Framework} \label{functional framework subsection}

We now recap the classical functional framework for the study of the deterministic Navier-Stokes Equation. We formally define the operator $\mathcal{L}$ as well as the divergence-free and Navier boundary conditions. The nonlinear operator $\mathcal{L}$ is defined for sufficiently regular functions $f,g:\mathscr{O} \rightarrow \R^2$ by $\mathcal{L}_fg:= \sum_{j=1}^2f^j\partial_jg.$ Here and throughout the text we make no notational distinction between differential operators acting on a vector valued function or a scalar valued one; that is, we understand $\partial_jg$ by its component mappings $(\partial_lg)^l := \partial_jg^l$. For any $m \geq 1$, the mapping $\mathcal{L}: W^{m+1,2} \rightarrow W^{m,2}$ defined by $f \mapsto \mathcal{L}_ff$ is continuous (see [\cite{goodair2022navier}] Lemma 1.2). Some more technical properties of the operator are given at the end of this subsection. For the divergence-free condition we mean a function $f$ such that the property $$\textnormal{div}f := \sum_{j=1}^2 \partial_jf^j = 0$$ holds. We require this property and the boundary condition to hold for our solution $u$ at all times, understood in their traditional weak senses: that is for weak derivatives $\partial_j$ so $\sum_{j=1}^2 \partial_jf^j = 0$ holds as an identity in $L^2(\mathscr{O};\R)$ whilst the boundary condition is understood in terms of trace. To be precise we first define the restriction mapping on functions $f \in W^{1,2}(\mathscr{O};\R) \cap C(\bar{\mathscr{O}};\R)$ by the restriction of $f$ to the boundary $\partial \mathscr{O}$, which is then shown to be a bounded operator into $W^{\frac{1}{2},2}(\partial \mathscr{O};\R)$ (see Lemma \ref{bounded trace} and more classical sources of e.g. [\cite{evans2010partial}]). By the density of $C(\bar{\mathscr{O}};\R)$ then the trace operator is well defined on the whole of $W^{1,2}(\mathscr{O};\R)$ as a continuous linear extension of the restriction mapping, and furthermore on $W^{1,2}(\mathscr{O};\R^2)$ by the trace of the components. The rate of strain tensor $D$ appearing in (\ref{navier boundary conditions}) is a mapping $D: W^{1,2} \rightarrow L^{2}(\mathscr{O};\R^{2 \times 2})$ defined by $$f \mapsto \begin{bmatrix}
        \partial_1f^1 & \frac{1}{2}\left(\partial_1f^2 + \partial_2f^1\right)\\
        \frac{1}{2}\left(\partial_2f^1 + \partial_1f^2\right) & \partial_2f^2
    \end{bmatrix}
    $$
    or in component form, $(Df)^{k,l}:=\frac{1}{2}\left(\partial_kf^l + \partial_lf^k\right)$. Note that if $f \in W^{2,2}$ then $Df \in W^{1,2}(\mathscr{O};\R^{2 \times 2})$ so the trace of its components is well defined and henceforth we understand the boundary condition $$ 2(Df)\mathbf{n} \cdot \iota + \alpha f \cdot \iota = 0$$ on $\partial \mathscr{O}$ to be in this trace sense. The same is true for $f \cdot \mathbf{n} = 0$. We look to impose these conditions by incorporating them into the function spaces where our solution takes value, and will always assume that $\alpha \in C^2(\partial \mathscr{O};\R)$ so as to match the regularity required in [\cite{clopeau1998vanishing}].
\begin{definition}
We define $C^{\infty}_{0,\sigma}$ as the subset of $C^{\infty}_0$ of functions which are divergence-free. $L^2_\sigma$ is defined as the completion of $C^{\infty}_{0,\sigma}$ in $L^2$, whilst we introduce $\bar{W}^{1,2}_\sigma$ as the intersection of $W^{1,2}$ with $L^2_\sigma$ and $\bar{W}^{2,2}_{\alpha}$ by $$\bar{W}^{2,2}_{\alpha}:= \left\{f \in W^{2,2}\cap \bar{W}^{1,2}_{\sigma}: 2(Df)\mathbf{n} \cdot \iota + \alpha f \cdot \iota = 0 \textnormal{ on } \partial \mathscr{O}\right\}.$$
\end{definition}

\begin{remark} \label{new first labelled remark}
    $L^2_{\sigma}$ can be characterised as the subspace of $L^2$ of weakly divergence-free functions with normal component weakly zero at the boundary (see [\cite{robinson2016three}] Lemma 2.14). Moreover the complement space of $L^2_{\sigma}$ in $L^2$ is characterised as the subspace of $L^2$ of functions $f$ such that there exists a $g \in W^{1,2}(\mathscr{O};\R)$ with the property that $f = \nabla g$ (see [\cite{robinson2016three}] Theorem 2.16).
\end{remark}

\begin{remark}  \label{first labelled remark}
    $\bar{W}^{1,2}_{\sigma}$ is precisely the subspace of $W^{1,2}$ consisting of divergence-free functions $f$ such that $f \cdot \mathbf{n} = 0$ on $\partial \mathscr{O}$. Moreover as both $D: W^{2,2} \rightarrow W^{1,2}(\mathscr{O};\R^{2 \times 2})$ and the trace mapping $W^{1,2}(\mathscr{O};\R) \rightarrow L^2(\partial \mathscr{O} ; \R)$ are continuous, then $\bar{W}^{1,2}_{\sigma}$, $\bar{W}^{2,2}_{\alpha}$ are closed in the $W^{1,2}$, $W^{2,2}$ norms respectively.
\end{remark}

We note that the Poincar\'{e} Inequality holds for the component mappings of functions in $\bar{W}^{1,2}_{\sigma}$. The inequality (see e.g. [\cite{robinson2016three}] Theorem 1.9) holds for the component mapping $f^j$ of $f \in \bar{W}^{1,2}_{\sigma}$ if $$ \int_{\mathscr{O}}f^j d\lambda = 0,$$ and observe that via an approximation with the set $C^{\infty}_{0,\sigma}$ which is dense in $L^2_{\sigma}$, one can conclude that for all $g \in L^2_{\sigma}$ and $\psi \in W^{1,2}(\mathscr{O};\R)$, $$\inner{g}{\nabla \psi} = 0$$ (this is the statement of [\cite{robinson2016three}] Lemma 2.11). Moreover by choosing $\phi$ as the function $\phi(x^1,x^2) := x^j$, then $$\int_{\mathscr{O}}f^j d\lambda = \inner{f}{\nabla \phi} = 0$$ so the Poincar\'{e} Inequality holds, and as such we equip $\bar{W}^{1,2}_{\sigma}$ with the inner product $$\inner{f}{g}_1 := \sum_{j=1}^2 \inner{\partial_j f}{\partial_j g}$$ which is equivalent to the full $W^{1,2}$ one. We introduce the Leray Projector $\mathcal{P}$ as the orthogonal projection in $L^2$ onto $L^2_{\sigma}$. It is well known (see e.g. [\cite{temam2001navier}] Remark 1.6.) that for any $m \in \N$, $\mathcal{P}$ is continuous as a mapping $\mathcal{P}: W^{m,2} \rightarrow W^{m,2}$. Through $\mathcal{P}$ we define the Stokes Operator $A: W^{2,2} \rightarrow L^2_{\sigma}$ by $A:= -\mathcal{P}\Delta$. We understand the Laplacian as an operator on vector valued functions through the component mappings, $(\Delta f)^l := \Delta f^l$. From the continuity of $\mathcal{P}$ we have immediately that for $m \in \{0\} \cup \N$, $A: W^{m+2,2} \rightarrow W^{m,2}$ is continuous. We also note that $A = A\mathcal{P}$ as the Laplacian leaves the complement space of $L^2_{\sigma}$ invariant, see e.g. [\cite{goodair20233d}] Subsection 2.2 and Lemma 2.7, and by iterating this property then for $\beta \in \N$ we have that $\mathcal{P}(-\Delta)^\beta = A^{\beta}$.

\begin{lemma} \label{eigenfunctions for navier}
    There exists a collection of functions $(\bar{a}_k)$, $\bar{a}_k \in C^{\infty}(\bar{\mathscr{O}};\R^2) \cap \bar{W}^{2,2}_{\alpha}$, such that the $(\bar{a}_k)$ are eigenfunctions of $A$, are an orthonormal basis in $L^2_{\sigma}$ and a basis in $\bar{W}^{1,2}_{\sigma}$. Moreover the eigenvalues $(\bar{\lambda}_k)$ are strictly positive and approach infinity as $k \rightarrow \infty$.
\end{lemma}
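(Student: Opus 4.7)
The plan is to realise the Stokes operator with Navier boundary conditions as a self-adjoint operator on $L^2_\sigma$ with compact resolvent, then invoke the spectral theorem for compact self-adjoint operators and a standard bootstrap to obtain smoothness.

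\textbf{Step 1 (variational formulation and inverse).} I would begin by defining the continuous, symmetric bilinear form on $\bar{W}^{1,2}_{\sigma}$ given by $a(u,v) := \sum_{j=1}^2\inner{\partial_j u}{\partial_j v}$, which by the Poincar\'{e} inequality (established above) is coercive and equivalent to the full $W^{1,2}$ inner product on $\bar{W}^{1,2}_{\sigma}$. For each $g \in L^2_\sigma$, the map $v \mapsto \inner{g}{v}$ is a bounded linear functional on $\bar{W}^{1,2}_{\sigma}$, so by Lax--Milgram there is a unique $u \in \bar{W}^{1,2}_\sigma$ with $a(u,v) = \inner{g}{v}$ for all $v \in \bar{W}^{1,2}_\sigma$; set $Tg := u$. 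One then invokes the elliptic regularity theory for the Stokes system with Navier boundary conditions (as in, e.g., the references used in the paper, including \cite{clopeau1998vanishing}) to conclude that $T:L^2_\sigma \to \bar{W}^{2,2}_\alpha$ is continuous and that $ATg = g$, which, combined with uniqueness, identifies $T$ with $A^{-1}$ on $\bar{W}^{2,2}_\alpha$.

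\textbf{Step 2 (compactness, self-adjointness, positivity).} Viewed as a map $L^2_\sigma \to L^2_\sigma$, the operator $T$ factors through $\bar{W}^{1,2}_\sigma \hookrightarrow L^2_\sigma$, which by Rellich--Kondrachov is compact, so $T$ is compact. Symmetry of $a$ together with the characterisation $\inner{Tg}{h} = a(Tg,Th) = \inner{g}{Th}$ shows $T$ is self-adjoint, and coercivity gives $\inner{Tg}{g} = a(Tg,Tg) \geq 0$ with equality only for $g=0$, so $T$ is strictly positive.

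\textbf{Step 3 (spectral decomposition).} The spectral theorem for compact self-adjoint operators on the Hilbert space $L^2_\sigma$ yields a countable orthonormal basis $(\bar{a}_k)$ of eigenvectors of $T$ with eigenvalues $\mu_k > 0$ tending monotonically to $0$. Setting $\bar{\lambda}_k = \mu_k^{-1}$, each $\bar{a}_k = \mu_k^{-1}T\bar{a}_k \in \bar{W}^{2,2}_\alpha$ satisfies $A\bar{a}_k = \bar{\lambda}_k \bar{a}_k$, and $\bar{\lambda}_k \to \infty$. That $(\bar{a}_k)$ is also a basis of $\bar{W}^{1,2}_\sigma$ follows because the form $a(\cdot,\cdot)$ is equivalent to the $\bar{W}^{1,2}_\sigma$ inner product and the eigenfunctions, being orthogonal in $L^2_\sigma$ and eigenvectors of $T$, are also orthogonal for $a$; finite linear combinations are dense in $\bar{W}^{1,2}_\sigma$ by a standard argument using the spectral representation of $T^{1/2}$.

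\textbf{Step 4 (smoothness up to the boundary).} The main technical point I expect is promoting each $\bar{a}_k$ from $\bar{W}^{2,2}_\alpha$ to $C^\infty(\bar{\mathscr{O}};\R^2)$. I would iterate the elliptic regularity theory for the steady Stokes problem with Navier boundary conditions: from $\bar{a}_k \in W^{m,2}$ and the eigenvalue equation $-\Delta \bar{a}_k + \nabla p_k = \bar{\lambda}_k \bar{a}_k$ (for some associated pressure $p_k$) together with the Navier boundary condition, regularity theory gives $\bar{a}_k \in W^{m+2,2}$ provided $\bar{\lambda}_k \bar{a}_k \in W^{m,2}$, which is immediate by induction on $m$. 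Sobolev embedding in two dimensions then produces $\bar{a}_k \in C^\infty(\bar{\mathscr{O}};\R^2)$. This bootstrap is the main obstacle only in that it requires a regularity statement for the Stokes system under Navier boundary conditions on smooth bounded domains; once that is taken as a known ingredient, the remaining spectral arguments are entirely classical.
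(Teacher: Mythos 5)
Your overall strategy (Lax--Milgram for the inverse, compactness via Rellich, spectral theorem, elliptic bootstrap) is the classical one, and it is essentially how the result is established in the literature; the paper itself simply cites [\cite{clopeau1998vanishing}] Lemma 2.2 for the construction and only adds the observation that applying $\mathcal{P}$ to $-\Delta \bar{a}_k + \nabla\pi_k = \bar{\lambda}_k\bar{a}_k$ identifies the $(\bar{a}_k)$ as eigenfunctions of $A=-\mathcal{P}\Delta$, together with a remark on smoothness for a smooth boundary. So you are reconstructing the cited proof rather than invoking it, which is fine in principle.

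There is, however, a genuine gap in Step 1: the bilinear form you chose does not encode the Navier boundary condition with a general friction coefficient $\alpha$. By Lemma \ref{greens for navier}, for $f\in\bar{W}^{2,2}_{\alpha}$ and $\phi\in\bar{W}^{1,2}_{\sigma}$ one has $\inner{Af}{\phi} = \inner{f}{\phi}_1 + \inner{(\alpha-\kappa)f}{\phi}_{L^2(\partial\mathscr{O};\R^2)}$, so the form associated with $A$ on $\bar{W}^{2,2}_{\alpha}$ is $a_\alpha(u,v) = \inner{u}{v}_1 + \inner{(\alpha-\kappa)u}{v}_{L^2(\partial\mathscr{O};\R^2)}$, not $\inner{u}{v}_1$ alone. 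With your form, the natural boundary condition recovered from the weak formulation after elliptic regularity forces the boundary integral to vanish against all tangential test functions, i.e.\ it produces eigenfunctions in $\bar{W}^{2,2}_{\kappa}$ rather than $\bar{W}^{2,2}_{\alpha}$; the identity $ATg=g$ with $Tg\in\bar{W}^{2,2}_{\alpha}$ fails for $\alpha\neq\kappa$. The repair is routine but not cosmetic: work with $a_\alpha$, whose symmetry is clear; for $\alpha\geq\kappa$ (which covers the paper's choice $\alpha=2\kappa$ on a convex domain) coercivity on $\bar{W}^{1,2}_{\sigma}$ is immediate and your Steps 2--4 then go through, while for general $\alpha$ the boundary term only yields a G\aa rding inequality via the trace estimate (\ref{inequality from Lions}), so you must apply the spectral theorem to $(A+\lambda I)^{-1}$ for a suitable shift $\lambda$ and then argue separately for the claimed strict positivity of the $(\bar{\lambda}_k)$. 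As a minor point, the $L^2_\sigma$-orthogonality and $a_\alpha$-orthogonality of the eigenfunctions, plus the Bessel-type estimate $\sum_k\bar{\lambda}_k\inner{u}{\bar{a}_k}^2\leq a_\alpha(u,u)$, is the cleanest way to finish the basis claim in $\bar{W}^{1,2}_\sigma$, in place of the appeal to $T^{1/2}$.
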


\begin{proof}
    This is the content of [\cite{clopeau1998vanishing}] Lemma 2.2, where $(\bar{a}_k)$ is $(v_k)$ in their notation. The fact that this system consists of eigenfunctions of $A$ follows from (2.10) by taking $\mathcal{P}$ of the top line: that is, $$\bar{\lambda}_k\bar{a}_k = \mathcal{P}\left(\bar{\lambda}_k\bar{a}_k\right) = \mathcal{P}\left(-\Delta \bar{a}_k + \nabla \pi_k  \right) = A \bar{a}_k$$ using that $\mathcal{P}\nabla \pi_k = 0$ from Remark \ref{new first labelled remark}. Note that their result is stated for only $W^{3,2}$ regularity due to just a $C^2$ boundary, which is lifted to smooth eigenfunctions when the boundary is smooth (see for example, [\cite{temam2001navier}] Chapter 1 Section 2.6). 
\end{proof}

One can see that we are building a framework to parallel that of the classic no-slip case, though a significant difference comes in the presence of a boundary integral for Green's type identities. What we achieve now is the following, recalling $\kappa \in C^2(\partial \mathscr{O};\R)$ to be the curvature of $\partial \mathscr{O}$.

\begin{lemma} \label{greens for navier}
    For $f \in \bar{W}^{2,2}_{\alpha}$, $\phi \in \bar{W}^{1,2}_{\sigma}$, we have that $$\inner{\Delta f}{\phi}  = -\inner{f}{\phi}_1 + \inner{(\kappa - \alpha)f}{\phi}_{L^2(\partial \mathscr{O}; \R^2)}.$$
\end{lemma}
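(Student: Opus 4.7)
The plan is to apply Green's first identity and then reduce the resulting boundary term by combining the tangency conditions on $f$ and $\phi$ with the Navier boundary condition and a Frenet-type identity for the outward normal along $\partial \mathscr{O}$. By the standard integration by parts (first for smooth functions, then extended by density using continuity of the trace map on $W^{2,2}$ and $W^{1,2}$),
$$\inner{\Delta f}{\phi} = -\inner{f}{\phi}_1 + \int_{\partial \mathscr{O}} \big((\nabla f)\mathbf{n}\big) \cdot \phi\, dS,$$
where $(\nabla f)\mathbf{n}$ denotes the vector with components $\sum_j n^j \partial_j f^l$. Since $\phi \in \bar{W}^{1,2}_{\sigma}$, Remark \ref{first labelled remark} gives $\phi \cdot \mathbf{n} = 0$ on $\partial \mathscr{O}$ in the trace sense, so $\phi = (\phi \cdot \iota)\iota$ along the boundary; similarly $f = (f \cdot \iota)\iota$ there. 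The boundary integrand factors as $\big((\nabla f)\mathbf{n}\cdot \iota\big)(\phi \cdot \iota)$.

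I would then symmetrise the gradient so that the Navier boundary condition for $f$ becomes applicable. Directly in components,
$$2(Df)\mathbf{n}\cdot \iota = \sum_{k,l}(\partial_k f^l + \partial_l f^k) n^k \iota^l = (\nabla f)\mathbf{n}\cdot \iota + \sum_{k,l} n^k \iota^l \partial_l f^k,$$
and the assumption $f \in \bar{W}^{2,2}_{\alpha}$ makes the left-hand side equal to $-\alpha\, f \cdot \iota$ on $\partial \mathscr{O}$. Hence
$$(\nabla f)\mathbf{n}\cdot \iota = -\alpha\, f \cdot \iota - \sum_{k,l} n^k \iota^l \partial_l f^k.$$

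The main obstacle is to express the residual term $\sum_{k,l} n^k \iota^l \partial_l f^k$ in terms of the curvature. I would extend $\mathbf{n}$ smoothly to a neighbourhood of $\partial \mathscr{O}$ via the signed distance function (available by smoothness of the boundary) and differentiate the boundary identity $\sum_k f^k n^k = 0$ along the tangent direction:
$$0 = \sum_{k,l} \iota^l \partial_l(f^k n^k) = \sum_{k,l} \iota^l n^k \partial_l f^k + \sum_k f^k (\iota \cdot \nabla) n^k.$$
The Frenet-type identity $(\iota \cdot \nabla)\mathbf{n} = \kappa\, \iota$ on the plane curve $\partial \mathscr{O}$ (with the sign convention making $\kappa \geq 0$ on convex boundaries) then yields $\sum_{k,l} n^k \iota^l \partial_l f^k = -\kappa\, f \cdot \iota$, so that $(\nabla f)\mathbf{n}\cdot \iota = (\kappa - \alpha)\, f \cdot \iota$ on $\partial \mathscr{O}$.

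Finally, because $f$ and $\phi$ are both tangential on the boundary, $(f\cdot\iota)(\phi\cdot\iota) = f \cdot \phi$ there, so the boundary integral reduces to $\inner{(\kappa-\alpha)f}{\phi}_{L^2(\partial \mathscr{O}; \R^2)}$, giving the stated identity. The two delicate points are: (i) justifying Green's identity against a trace of a mere $W^{1,2}$ function, handled by density of smooth functions (e.g.\ finite linear combinations of the eigenfunctions of Lemma \ref{eigenfunctions for navier}) in $\bar{W}^{2,2}_{\alpha}$ and $\bar{W}^{1,2}_{\sigma}$ together with continuity of the trace operators; and (ii) the correct sign convention in the Frenet identity $\partial_\iota \mathbf{n} = \kappa\, \iota$, which is routine once the boundary is smooth.
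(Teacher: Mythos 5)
Your argument is correct, and it is essentially the computation that lies behind the paper's proof, which simply cites [\cite{kelliher2006navier}] equation (5.1) rather than deriving the identity. Each step checks out: Green's identity componentwise gives the boundary term $\int_{\partial\mathscr{O}}\big((\nabla f)\mathbf{n}\cdot\iota\big)(\phi\cdot\iota)\,dS$ once the tangency of $\phi$ is used; the symmetrisation $2(Df)\mathbf{n}\cdot\iota = (\nabla f)\mathbf{n}\cdot\iota + \sum_{k,l}n^k\iota^l\partial_l f^k$ is right; and differentiating $f\cdot\mathbf{n}=0$ tangentially with $\partial_\iota\mathbf{n}=\kappa\iota$ (the sign convention matching $\kappa\ge 0$ on convex boundaries, as the paper uses) gives $\sum_{k,l}n^k\iota^l\partial_l f^k=-\kappa f\cdot\iota$, hence $(\nabla f)\mathbf{n}\cdot\iota=(\kappa-\alpha)f\cdot\iota$. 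Two small remarks. First, your residual-term computation is exactly equivalent to the curl identity the paper states just before Corollary \ref{vorticity corollary} (namely $2(Df)\mathbf{n}\cdot\iota+2\kappa f\cdot\iota=\textnormal{curl}f$ on $\partial\mathscr{O}$, quoted from [\cite{clopeau1998vanishing}]): since $(\nabla f)\mathbf{n}\cdot\iota-\sum_{k,l}n^k\iota^l\partial_lf^k=\textnormal{curl}f$ when $\iota=\mathbf{n}^{\perp}$, you could have invoked that lemma and skipped the Frenet computation entirely. Second, for the density step you do not need approximants satisfying the boundary conditions: Green's identity for $f\in W^{2,2}$, $\phi\in W^{1,2}$ follows from approximating $\phi$ by arbitrary smooth functions in $W^{1,2}$ together with continuity of the trace maps, and the boundary manipulations are then identities between traces ($f|_{\partial\mathscr{O}}\in W^{3/2,2}(\partial\mathscr{O})$, so its tangential derivative is well defined); invoking the eigenfunction basis of Lemma \ref{eigenfunctions for navier} is more than is required. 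What your write-up buys over the paper's one-line citation is a self-contained proof and an explicit view of where the curvature term comes from; what the citation buys is brevity and delegation of the trace-regularity bookkeeping to the reference.
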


\begin{proof}
    This is demonstrated in [\cite{kelliher2006navier}] equation (5.1).
\end{proof}

Due to this boundary integral we will make great use of Lebesgue spaces and fractional Sobolev spaces defined on the boundary $\partial \mathscr{O}$. The spaces $L^p(\partial\mathscr{O};\R^2)$ can be defined precisely as in Subsection \ref{sub elementary notation} for $\partial \mathscr{O}$ equipped with its surface measure, and in fact the same is true for $W^{s,2}(\partial \mathscr{O};\R)$ with $0 < s < 1$ and hence $W^{s,2}(\partial \mathscr{O};\R^2)$ in the same manner. Indeed this definition is given in [\cite{grisvard2011elliptic}] pp.20, where it is shown to be equivalent to the often used definition locally as the space $W^{s,2}(\R;\R)$ via a coordinate transformation and partition of unity. The stability under a change of variables ensures results of H\"{o}lder's Inequality and (one dimensional) Sobolev Embeddings hold on the boundary for these spaces. The relation to the trace of functions in $\mathscr{O}$ is stated now.
\begin{lemma} \label{bounded trace}
    For $\frac{1}{2} <s < \frac{3}{2}$ the trace operator is bounded and linear from $W^{s,2}(\mathscr{O};\R)$ into $W^{s- \frac{1}{2},2}(\partial \mathscr{O};\R)$. 
\end{lemma}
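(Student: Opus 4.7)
The plan is the classical three-step reduction: localize near the boundary, flatten via charts to a half-space, then prove the half-space trace theorem by Fourier analysis. Since $\mathscr{O}$ is a smooth bounded domain in $\R^2$, its boundary $\partial \mathscr{O}$ admits a finite cover by open sets $U_1, \dots, U_M$ with smooth charts $\psi_i : U_i \to V_i \subset \R \times \R$ straightening $U_i \cap \partial \mathscr{O}$ onto $V_i \cap (\R \times \{0\})$ and sending $U_i \cap \mathscr{O}$ into $V_i \cap (\R \times \R_+)$. Adjoining an interior set $U_0 \Subset \mathscr{O}$ yields a smooth partition of unity $\{\chi_i\}_{i=0}^M$. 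Since $\chi_0 f$ has zero trace on $\partial \mathscr{O}$, it suffices to estimate each $\chi_i f$, which after transport becomes the compactly supported function $g_i := (\chi_i f) \circ \psi_i^{-1}$ on $\R \times \R_+$.

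First I would verify that multiplication by $\chi_i \in C^\infty_c$ and composition with the smooth bi-Lipschitz diffeomorphism $\psi_i^{-1}$ are bounded operations on $W^{s,2}$ for $0 < s < 2$, so that $\|g_i\|_{W^{s,2}(\R \times \R_+;\R)} \leq C \|f\|_{W^{s,2}(\mathscr{O};\R)}$. For integer $s \in \{0,1\}$ this is the Leibniz and chain rule. For fractional $s$ it follows from the Slobodeckij seminorm by a direct change of variables in the double integral, using that the Jacobian of $\psi_i^{-1}$ is bounded above and below.

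Next I would prove the flat trace estimate on $\R \times \R_+$. Extend $g_i$ to $\tilde g_i \in W^{s,2}(\R^2)$ by a bounded extension operator (Calder\'{o}n--Stein, simple here as $g_i$ is compactly supported). Using the Fourier characterization $\|u\|_{W^{s,2}(\R^2)}^2 \simeq \int_{\R^2}(1+|\xi|^2)^s |\hat u(\xi)|^2 \, d\xi$, the trace $\tau \tilde g_i(x) := \tilde g_i(x,0)$ has Fourier transform
\[
\widehat{\tau \tilde g_i}(\xi_1) = \frac{1}{2\pi}\int_\R \hat{\tilde g}_i(\xi_1,\xi_2)\, d\xi_2.
\]
By Cauchy--Schwarz with the weight $(1+|\xi|^2)^s$,
\[
\bigl|\widehat{\tau \tilde g_i}(\xi_1)\bigr|^2 \leq \left(\int_\R \frac{d\xi_2}{(1+\xi_1^2+\xi_2^2)^s}\right)\int_\R (1+|\xi|^2)^s |\hat{\tilde g}_i(\xi_1,\xi_2)|^2 \, d\xi_2.
\]
The first factor equals $C(1+\xi_1^2)^{\frac{1}{2}-s}$ precisely when $s > \tfrac{1}{2}$, converting the target weight $(1+\xi_1^2)^{s-1/2}$ on the boundary Fourier side into the bulk weight $(1+|\xi|^2)^s$. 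Integrating in $\xi_1$ and invoking the equivalent Fourier norm on $W^{s-1/2,2}(\R)$ produces $\|\tau \tilde g_i\|_{W^{s-1/2,2}(\R;\R)}^2 \leq C \|\tilde g_i\|_{W^{s,2}(\R^2;\R)}^2 \leq C\|g_i\|_{W^{s,2}(\R\times \R_+;\R)}^2$.

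Finally I would transfer back through the charts: by the definition of $W^{s-1/2,2}(\partial\mathscr{O};\R)$ via local coordinates and a partition of unity on $\partial \mathscr{O}$ (as in Grisvard), summing the pieces gives the desired inequality $\|f|_{\partial \mathscr{O}}\|_{W^{s-1/2,2}(\partial \mathscr{O};\R)} \leq C\|f\|_{W^{s,2}(\mathscr{O};\R)}$, first for $f \in C^\infty(\bar{\mathscr{O}};\R)$ and then by density on all of $W^{s,2}(\mathscr{O};\R)$. The main obstacle is bookkeeping the fractional Sobolev norms under diffeomorphisms and cutoff multiplication; the two sharp thresholds $s > \tfrac{1}{2}$ (so the $\xi_2$-integral converges) and $s < \tfrac{3}{2}$ (so $s-\tfrac{1}{2} < 1$, keeping the boundary index within the single-integral Slobodeckij regime) both emerge transparently from this Fourier computation.
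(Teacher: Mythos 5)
Your argument is correct, but it is genuinely different from what the paper does: the paper offers no proof at all, simply citing Theorem 1 of Ding's \emph{A proof of the trace theorem of Sobolev spaces on Lipschitz domains}, whereas you reconstruct the classical localize--flatten--Fourier proof. Your Cauchy--Schwarz computation on the half-space is the standard one and is sound: the $\xi_2$-integral of $(1+\xi_1^2+\xi_2^2)^{-s}$ evaluates to $C_s(1+\xi_1^2)^{\frac12-s}$ exactly when $s>\frac12$, and the chart/cutoff bookkeeping you describe is routine for a smooth domain. The main difference in scope is that the cited reference establishes the result on Lipschitz domains, where the window $\frac12<s<\frac32$ is genuinely sharp and where your smooth flattening charts are unavailable; your proof exploits the smoothness of $\partial\mathscr{O}$ assumed throughout the paper, which is all that is needed here. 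One small correction: the upper threshold $s<\frac32$ does \emph{not} emerge from your Fourier computation, which works for every $s>\frac12$; on a smooth domain the trace theorem continues to hold for $s\geq\frac32$ into suitably defined higher-order boundary spaces. The restriction in the lemma reflects either the Lipschitz setting of the cited source or, in your framework, merely the wish to keep $s-\frac12<1$ so that $W^{s-\frac12,2}(\partial\mathscr{O};\R)$ is the single Slobodeckij seminorm space defined in the paper; calling it a sharp threshold of the computation slightly misstates its origin, but this does not affect the validity of the proof for the stated range.
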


\begin{proof}
    See [\cite{ding1996proof}] Theorem 1. 
\end{proof}

We shall make use of this result for the characterisation of the fractional Sobolev spaces as interpolation spaces. In fact the renowned book of Adams [\cite{adams2003sobolev}] defines these spaces in this way (7.57, pp.250), and their equivalence is well understood (see for example [\cite{tartar2007introduction}] pp.83). A proof of this equivalence relies on a norm preserving extension operator for the space as defined by interpolation (which follows from the classical integer valued case, see [\cite{adams2003sobolev}] Theorem 5.24 and [\cite{stein1970singular}] chapter 6), the equivalence of the interpolation space on $\R^2$ with that defined by Fourier transformations (see [\cite{adams2003sobolev}] 7.63 pp.252), the further equivalence of this space with our definition on $\R^2$ ([\cite{demengel2012functional}] Proposition 4.17), and finally a norm preserving extension for this fractional Sobolev space ([\cite{di2012hitchhikers}] Theorem 5.4). Therefore there exists a constant $c$ such that for $f \in W^{1,2}$ and $0 < s < 1$, we have that \begin{equation} \label{interpolation}
    \norm{f}_{W^{s,2}} \leq c\norm{f}^{1-s}\norm{f}_{W^{1,2}}^s.
\end{equation}
In particular for $s=\frac{1}{2}$, if the result of Lemma \ref{bounded trace} were true in this limiting case (that is, an embedding of $W^{\frac{1}{2},2}(\mathscr{O};\R^2)$ into $L^2(\partial \mathscr{O};\R^2)$) then combined with (\ref{interpolation}) we would obtain \begin{equation}
    \label{inequality from Lions}
    \norm{f}_{L^2(\partial \mathscr{O};\R^2)}^2 \leq c\norm{f}\norm{f}_{W^{1,2}}.
\end{equation}
This inequality is in fact true and is classical in the study of our problem (see for example [\cite{lions1996mathematical}] pp.130, [\cite{kelliher2006navier}] equation (2.5)) though some additional machinery is required to prove it. In short the result can be achieved by showing that the trace operator is a continuous linear operator from an appropriate Besov space which similarly interpolates between $L^2$ and $W^{1,2}$ (see [\cite{adams2003sobolev}] Theorem 7.43 and Remark 7.45 for the trace embedding, and the Besov Spaces subchapter for the interpolation). Moving on we introduce the finite dimensional projections $(\bar{\mathcal{P}}_n)$, where $\bar{\mathcal{P}}_n$ is the orthogonal projection onto $\bar{V}_n:=\textnormal{span}\{\bar{a}_1, \dots, \bar{a}_n\}$ in $L^2$ (note that $\bar{V}_n$ is a Hilbert Space equipped with any $W^{k,2}$). That is, $\bar{\mathcal{P}}_n$ is given by $$\bar{\mathcal{P}}_n : f \mapsto \sum_{k=1}^n \inner{f}{\bar{a}_k}\bar{a}_k.$$

\newpage



    



\newpage

It will also be of use to us to consider the vorticity in this context, which we do by introducing the operator $\textnormal{curl}: W^{1,2} \rightarrow L^2(\mathscr{O};\R)$ by $$\textnormal{curl}: f \rightarrow \partial_1 f^2 - \partial_2 f^1.$$
A significant property in the study of vorticity is the following.
\begin{lemma} \label{lemma for curl and P}
    For all $f \in W^{1,2}$, $\textnormal{curl}(\mathcal{P}f) = \textnormal{curl}f$.
\end{lemma}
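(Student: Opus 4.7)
The plan is to decompose $f$ via the Helmholtz-type orthogonal decomposition of $L^2$ recorded in Remark \ref{new first labelled remark} and then show that the gradient part contributes nothing to the curl. Writing $f = \mathcal{P}f + (I-\mathcal{P})f$, the continuity of the Leray Projector $\mathcal{P}:W^{1,2} \to W^{1,2}$, which was noted in this subsection, ensures that both summands lie in $W^{1,2}$, so that the curl of each is a well-defined element of $L^2(\mathscr{O};\R)$. By linearity of $\textnormal{curl}$ it then suffices to prove that $\textnormal{curl}\left((I-\mathcal{P})f\right) = 0$.

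By the second part of Remark \ref{new first labelled remark}, the element $(I-\mathcal{P})f$, being in the $L^2$ complement of $L^2_{\sigma}$, can be written as $\nabla g$ for some $g \in W^{1,2}(\mathscr{O};\R)$. Since we already know $\nabla g = (I-\mathcal{P})f$ is itself in $W^{1,2}$, its components $\partial_1 g$ and $\partial_2 g$ are in $W^{1,2}(\mathscr{O};\R)$, so $g$ is automatically in $W^{2,2}(\mathscr{O};\R)$. For such a function the mixed weak partial derivatives coincide, $\partial_1\partial_2 g = \partial_2\partial_1 g$ in $L^2(\mathscr{O};\R)$, so that
\begin{equation*}
\textnormal{curl}(\nabla g) = \partial_1(\partial_2 g) - \partial_2(\partial_1 g) = 0.
\end{equation*}
Combining this with the decomposition yields $\textnormal{curl}f = \textnormal{curl}(\mathcal{P}f) + \textnormal{curl}(\nabla g) = \textnormal{curl}(\mathcal{P}f)$, as required.

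There is really no serious obstacle: the only subtlety is that Remark \ref{new first labelled remark} furnishes $g$ only with $W^{1,2}$ regularity, so one must observe the automatic upgrade to $W^{2,2}$ from the regularity of $(I-\mathcal{P})f$ before invoking the equality of mixed weak partials. An alternative route, if one prefers to sidestep even that bookkeeping, is to establish the identity first for $f \in C^{\infty}$ (where the Helmholtz decomposition yields a smooth gradient and the equality of mixed partials is classical) and then extend by density, using that $\mathcal{P}$ and $\textnormal{curl}$ are both continuous from $W^{1,2}$ into $W^{1,2}$ and $L^2(\mathscr{O};\R)$ respectively.
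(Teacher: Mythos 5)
Your proof is correct and follows essentially the same route as the paper's: decompose $f = \mathcal{P}f + (I-\mathcal{P})f$, invoke Remark \ref{new first labelled remark} to write the complement part as a gradient $\nabla g$, and observe that the curl of a gradient vanishes by equality of mixed weak partials. The only difference is that you spell out the regularity bookkeeping (the automatic upgrade of $g$ to $W^{2,2}$) that the paper leaves implicit.
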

\begin{proof}
     If $\phi = \nabla g \in W^{1,2}$ then $\textnormal{curl}\phi = \partial_1\partial_2g - \partial_2\partial_1g = 0$, which from Remark \ref{new first labelled remark} establishes that $\textnormal{curl}\left(\mathcal{P}f\right) = \textnormal{curl}\left([\mathcal{P} + \mathcal{P}^{\perp}]f \right) = \textnormal{curl}f$ where $\mathcal{P}^{\perp}$ is the complement projection $I - \mathcal{P}$ on $L^2$.
\end{proof}
The curl is intrinsically related to the Navier boundary conditions through $\kappa$, by the relation proven as Lemma 2.1 in [\cite{clopeau1998vanishing}] which we state here.
\begin{lemma}
    For all $f \in W^{2,2}$ with $f \cdot \mathbf{n} = 0$ on $\partial \mathscr{O}$, we have that $$2(Df)\mathbf{n} \cdot \iota +2\kappa f \cdot \iota - \textnormal{curl}f = 0$$ on $\partial \mathscr{O}$. 
\end{lemma}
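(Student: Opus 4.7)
The identity is pointwise on $\partial \mathscr{O}$, so the plan is to reduce everything to an algebraic manipulation at a fixed boundary point, combined with one tangential differentiation of the constraint $f\cdot\mathbf{n}=0$.

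First I would reduce to the case of smooth $f$. By density of $C^\infty(\bar{\mathscr{O}};\R^2)$ in $W^{2,2}$ and the boundedness of the trace map $W^{2,2} \rightarrow W^{3/2,2}(\partial\mathscr{O};\R^2)$, I can approximate $f$ by functions satisfying $f\cdot\mathbf{n}=0$ on $\partial\mathscr{O}$ in $W^{2,2}$ (projecting onto the tangential component on a collar of $\partial\mathscr{O}$ and using the smoothness of $\mathbf{n}$). Every term in the claimed identity depends continuously on $f\in W^{2,2}$ in the $L^2(\partial\mathscr{O};\R)$-norm (via the trace of $\nabla f$ in the case of $Df$ and $\textnormal{curl}\, f$), so it suffices to establish it for smooth $f$, where the pointwise evaluation on $\partial\mathscr{O}$ is unambiguous.

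Next, fix $x\in \partial\mathscr{O}$ and write $\mathbf{n}=(n^1,n^2)$ and $\iota=(-n^2,n^1)$. By symmetry of the strain tensor,
\[
2(Df)\mathbf{n}\cdot\iota \;=\; (\mathbf{n}\cdot\nabla f)\cdot\iota \;+\; (\iota\cdot\nabla f)\cdot\mathbf{n}.
\]
A direct expansion using $(n^1)^2+(n^2)^2=1$ gives the identity
\[
(\mathbf{n}\cdot\nabla f)\cdot\iota \;-\; (\iota\cdot\nabla f)\cdot\mathbf{n} \;=\; \partial_1 f^2 - \partial_2 f^1 \;=\; \textnormal{curl}\, f,
\]
which is the key algebraic observation: the antisymmetric part of the directional derivative in the frame $(\mathbf{n},\iota)$ recovers the curl because that frame is an orthogonal rotation of the standard basis.

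Then I use the boundary condition. Parametrising $\partial\mathscr{O}$ by arclength, say $\gamma(s)$, with $\gamma'(s)=\iota(s)$ and the Frenet relation $\mathbf{n}'(s)=\kappa(s)\iota(s)$ (for the counterclockwise, outward-normal convention), differentiating $f(\gamma(s))\cdot\mathbf{n}(s)\equiv 0$ in $s$ yields
\[
(\iota\cdot\nabla f)\cdot\mathbf{n} \;+\; \kappa\, f\cdot\iota \;=\; 0 \qquad \text{on } \partial\mathscr{O}.
\]
Substituting back gives $(\mathbf{n}\cdot\nabla f)\cdot\iota = \textnormal{curl}\, f - \kappa\, f\cdot\iota$, and combining with the decomposition of $2(Df)\mathbf{n}\cdot\iota$ yields
\[
2(Df)\mathbf{n}\cdot\iota \;=\; \textnormal{curl}\, f \;-\; 2\kappa\, f\cdot\iota,
\]
which is the claim.

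I expect the one genuinely non-routine step to be the justification of the Frenet identity $\mathbf{n}'(s)=\kappa\iota(s)$ with the sign convention matching the author's $\kappa$; once that is fixed, the remainder is a short computation. The density/approximation argument at the start is technical but standard and would not be written out in detail.
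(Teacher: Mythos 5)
Your proof is correct. The paper itself gives no argument here: it simply cites Lemma~2.1 of Clopeau--Mikeli\'{c}--Robert [\cite{clopeau1998vanishing}], so your self-contained computation is a genuine addition rather than a reproduction of the paper's text (it is, in substance, the standard proof one finds in that reference). The three ingredients all check out: the symmetric decomposition $2(Df)\mathbf{n}\cdot\iota = (\mathbf{n}\cdot\nabla f)\cdot\iota + (\iota\cdot\nabla f)\cdot\mathbf{n}$ follows from relabelling indices in $\sum_{k,l}(\partial_k f^l+\partial_l f^k)n^l\iota^k$; the antisymmetric combination $(\mathbf{n}\cdot\nabla f)\cdot\iota - (\iota\cdot\nabla f)\cdot\mathbf{n} = \partial_1f^2-\partial_2f^1$ is an exact identity once $\iota=(-n^2,n^1)$ and $|\mathbf{n}|=1$; and with that orientation (counterclockwise tangent, outward normal) the Frenet relation is indeed $\mathbf{n}'(s)=\kappa\,\iota(s)$, as one confirms on the unit circle, so differentiating $f\cdot\mathbf{n}\equiv 0$ along the boundary gives $(\iota\cdot\nabla f)\cdot\mathbf{n}=-\kappa f\cdot\iota$ and the signs in the final identity come out as stated. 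You are right to flag the orientation as the one delicate point: with the opposite choice $\iota=(n^2,-n^1)$ the curl would appear with the wrong sign, so the lemma implicitly fixes the counterclockwise convention. The initial density step could even be avoided by proving, for arbitrary smooth $f$, the identity $2(Df)\mathbf{n}\cdot\iota+2\kappa f\cdot\iota-\textnormal{curl}f = 2\partial_\iota(f\cdot\mathbf{n})$ and passing to the limit, but your constraint-preserving approximation is also fine.
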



Moreover we can make the condition (\ref{its a new rep}) discussed in the introduction precise.

\begin{corollary} \label{vorticity corollary}
    Suppose that $f \in W^{2,2}$ with $f \cdot \mathbf{n} = 0$ on $\partial \mathscr{O}$. Then $f$ satisfies $$2(Df)\mathbf{n} \cdot \mathbf{\iota} + \alpha f\cdot \mathbf{\iota} = 0$$ on $\partial \mathscr{O}$ if and only if it satisfies $$ \textnormal{curl}f = (2\kappa - \alpha)f \cdot \iota$$ on $\partial \mathscr{O}$.
\end{corollary}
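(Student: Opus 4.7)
The plan is to obtain the corollary as a direct algebraic rearrangement of the preceding lemma, which is the substantive geometric input. Since $f \in W^{2,2}$ with $f \cdot \mathbf{n} = 0$ on $\partial \mathscr{O}$ by hypothesis, the preceding lemma immediately applies and yields the identity
\[
2(Df)\mathbf{n} \cdot \iota + 2\kappa f \cdot \iota - \textnormal{curl}\, f = 0
\]
on $\partial \mathscr{O}$. I would then simply solve this relation for $2(Df)\mathbf{n} \cdot \iota$, giving $2(Df)\mathbf{n} \cdot \iota = \textnormal{curl}\, f - 2\kappa f \cdot \iota$ as an identity of traces on $\partial \mathscr{O}$.

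For the forward implication, I would substitute this expression into the Navier condition $2(Df)\mathbf{n} \cdot \iota + \alpha f \cdot \iota = 0$, obtaining $\textnormal{curl}\, f - 2\kappa f \cdot \iota + \alpha f \cdot \iota = 0$, which is exactly $\textnormal{curl}\, f = (2\kappa - \alpha) f \cdot \iota$. For the converse, I would start from $\textnormal{curl}\, f = (2\kappa - \alpha) f \cdot \iota$ and substitute into the same rearranged identity to recover $2(Df)\mathbf{n} \cdot \iota + \alpha f \cdot \iota = 0$.

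Since both directions amount to a linear combination of the same equation, they can be written simultaneously in a single chain of equivalences on $\partial \mathscr{O}$, and there is no real obstacle once the preceding lemma is invoked; the only thing to check is that all quantities involved (the trace of $(Df)\mathbf{n}$, the trace of $f$, and $\textnormal{curl}\, f$) are well defined in the trace sense, which follows from $f \in W^{2,2}$ together with the boundedness of the trace operator established in Lemma \ref{bounded trace}, and from $\alpha, \kappa \in C^2(\partial \mathscr{O};\R)$ so that pointwise multiplication on the boundary preserves the relevant function space. Thus the argument reduces to a one-line manipulation, and no further machinery is needed.
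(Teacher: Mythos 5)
Your proposal is correct and is exactly the argument the paper intends: the corollary is stated without proof precisely because it is the immediate algebraic rearrangement of the preceding lemma's identity $2(Df)\mathbf{n}\cdot\iota + 2\kappa f\cdot\iota - \textnormal{curl}f = 0$, which is what you carry out. Your added remark on trace well-definedness is consistent with how the paper sets up the boundary conditions in Subsection \ref{functional framework subsection}.
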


 Returning to the nonlinear term, we shall frequently understand the $L^2$ inner product as a duality pairing between $L^{\frac{4}{3}}$ and $L^4$ as justified in the following.

\begin{lemma} \label{the 2D bound lemma}
There exists a constant $C$ such that for every $\phi,f,g \in W^{1,2}$, we have that $$ \left\vert  \inner{\mathcal{L}_{\phi}f}{g} \right\vert \leq C\norm{\phi}^{\frac{1}{2}}\norm{\phi}_1^{\frac{1}{2}}\norm{f}_1\norm{g}^{\frac{1}{2}}\norm{g}_1^{\frac{1}{2}}.$$
\end{lemma}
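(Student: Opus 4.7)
The plan is to combine Hölder's inequality with the two-dimensional Ladyzhenskaya (Gagliardo--Nirenberg) interpolation inequality; the argument is entirely standard once these two tools are assembled in the correct order.

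First, I would unfold the definition $\mathcal{L}_\phi f = \sum_{j=1}^{2} \phi^j \partial_j f$ and write the inner product as $\langle \mathcal{L}_\phi f, g\rangle = \sum_{j=1}^{2} \int_{\mathscr{O}} \phi^j (\partial_j f)\cdot g\, d\lambda$. Applying Hölder's inequality with exponents $(4,2,4)$ to each summand yields
$$|\langle \mathcal{L}_\phi f, g \rangle| \leq \sum_{j=1}^{2} \|\phi^j\|_{L^4(\mathscr{O};\R)} \|\partial_j f\|_{L^2(\mathscr{O};\R^2)} \|g\|_{L^4(\mathscr{O};\R^2)} \leq C \|\phi\|_{L^4} \|f\|_1 \|g\|_{L^4},$$
the derivative factor being absorbed into $\|f\|_1$ via the Cauchy--Schwarz inequality over the two component contributions.

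Next, I would invoke the two-dimensional Ladyzhenskaya estimate
$$\|h\|_{L^4} \leq C \|h\|^{1/2} \|h\|_1^{1/2}, \qquad h \in W^{1,2},$$
valid on the smooth bounded 2D domain $\mathscr{O}$. This is Gagliardo--Nirenberg in $\R^2$ applied after extending $h$ to $\tilde h \in W^{1,2}(\R^2)$ by the Stein extension theorem: one has $\|\tilde h\|_{L^4(\R^2)}^4 \leq C \|\tilde h\|_{L^2(\R^2)}^2 \|\tilde h\|_{W^{1,2}(\R^2)}^2$, and restriction back to $\mathscr{O}$ together with boundedness of the extension operator gives the stated bound.

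Substituting this estimate for both $\phi$ and $g$ in the inequality from the first step immediately produces the desired
$$|\langle \mathcal{L}_\phi f, g \rangle| \leq C \|\phi\|^{1/2} \|\phi\|_1^{1/2} \|f\|_1 \|g\|^{1/2} \|g\|_1^{1/2}.$$
There is no substantive obstacle: the Ladyzhenskaya interpolation is precisely the dimension-two ingredient that makes the triple product close with the stated exponents, and the remaining manipulations are routine Hölder bookkeeping.
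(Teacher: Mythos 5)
Your proposal is correct and follows essentially the same route as the paper: the paper applies H\"{o}lder twice (first as an $L^{4/3}$--$L^4$ duality pairing, then within the $L^{4/3}$ norm), which is just your single three-exponent H\"{o}lder written in two steps, and then both arguments close with the 2D Ladyzhenskaya/Gagliardo--Nirenberg interpolation $\norm{h}_{L^4} \leq c\norm{h}^{1/2}\norm{h}_{W^{1,2}}^{1/2}$ applied to $\phi$ and $g$. No substantive difference.
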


\begin{proof}
   From two instances of H\"{o}lder's Inequality as well as the Gagliardo-Nirenberg Inequality with $p=4, q=2, \alpha = 1/2$ and $m=1$, 
  \begin{align} \nonumber
       \left\vert\inner{\mathcal{L}_{\phi}f}{g}\right\vert \leq \norm{\mathcal{L}_{\phi}f}_{L^{4/3}}\norm{g}_{L^4} &\leq c\sum_{k=1}^2\norm{\phi}_{L^4}\norm{\partial_kf}\norm{g}_{L^4} \leq c\norm{\phi}^{\frac{1}{2}}\norm{\phi}_1^{\frac{1}{2}}\norm{f}_1\norm{g}^{\frac{1}{2}}\norm{g}_1^{\frac{1}{2}}\label{a bound in align}
    \end{align}
\end{proof}

\begin{remark}
    This inequality is critical in the study of 2D Navier-Stokes, its failure in 3D responsible for the lack of global strong solutions.
\end{remark}

This subsection concludes with a symmetry result for the trilinear form defined by the nonlinear term which is classical when zero trace is assumed, but perhaps not in lieu of this assumption.

\begin{lemma} \label{navier boundary nonlinear}
    For every $\phi \in \bar{W}^{1,2}_{\sigma}$, $f,g \in W^{1,2}$ we have that \begin{equation}\label{wloglhs}\inner{\mathcal{L}_{\phi}f}{g}= -\inner{f}{\mathcal{L}_{\phi}g}.\end{equation}
    Moreover, \begin{equation} \label{cancellationproperty'} \inner{\mathcal{L}_{\phi}f}{f}= 0.\end{equation}
\end{lemma}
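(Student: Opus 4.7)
The second identity (\ref{cancellationproperty'}) is immediate from (\ref{wloglhs}): putting $g=f$ yields $\inner{\mathcal{L}_\phi f}{f} = -\inner{\mathcal{L}_\phi f}{f}$, forcing both sides to vanish. I therefore focus on (\ref{wloglhs}), whose underlying mechanism is component-wise integration by parts. Formally, summing over $j,l \in \{1,2\}$,
\begin{align*}
\inner{\mathcal{L}_{\phi}f}{g}
&= \sum_{j,l}\int_{\mathscr{O}} \phi^j (\partial_j f^l) g^l\, d\lambda \\
&= -\sum_{j,l}\int_{\mathscr{O}} \left[(\partial_j \phi^j)\, f^l g^l + \phi^j f^l (\partial_j g^l)\right] d\lambda + \sum_{j,l}\int_{\partial \mathscr{O}} \phi^j n_j\, f^l g^l\, d\sigma.
\end{align*}
The divergence-sum on the right vanishes by $\mathrm{div}\,\phi = 0$ (Remark \ref{new first labelled remark}, since $\phi \in L^2_\sigma$), the middle piece is exactly $-\inner{f}{\mathcal{L}_\phi g}$, and the boundary contribution equals $\int_{\partial\mathscr{O}}(\phi\cdot\mathbf{n})(f\cdot g)\,d\sigma$ which is zero because $\phi \cdot \mathbf{n} = 0$ on $\partial\mathscr{O}$ (Remark \ref{first labelled remark}).

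To make this rigorous when $\phi$ is only of class $W^{1,2}$, I approximate by smooth functions. By Lemma \ref{eigenfunctions for navier} the linear span of $(\bar{a}_k)$ is dense in $\bar{W}^{1,2}_\sigma$, producing a sequence $\phi_n \in C^\infty(\bar{\mathscr{O}};\R^2)\cap \bar{W}^{2,2}_\alpha$ with $\phi_n \to \phi$ in $W^{1,2}$; by construction each $\phi_n$ is divergence free and satisfies $\phi_n \cdot \mathbf{n} = 0$ on $\partial\mathscr{O}$. By the density of $C^\infty(\bar{\mathscr{O}};\R^2)$ in $W^{1,2}$, choose also $f_n \to f$ and $g_n \to g$ in $W^{1,2}$. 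For these smooth triples the classical divergence theorem applied to the vector field $\phi_n^j f_n^l g_n^l$ gives
\[
\inner{\mathcal{L}_{\phi_n} f_n}{g_n} = -\inner{f_n}{\mathcal{L}_{\phi_n} g_n},
\]
the divergence and boundary terms vanishing verbatim for the reasons above.

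It remains to pass to the limit on both sides, for which the 2D Sobolev embedding $W^{1,2} \hookrightarrow L^4$ combined with H\"older's inequality suffices. Writing
\[
\inner{\mathcal{L}_{\phi_n} f_n}{g_n} - \inner{\mathcal{L}_{\phi} f}{g} = \inner{\mathcal{L}_{\phi_n - \phi} f_n}{g_n} + \inner{\mathcal{L}_{\phi} (f_n - f)}{g_n} + \inner{\mathcal{L}_{\phi} f}{g_n - g},
\]
each term is bounded by a product of two $L^4$ norms and one $L^2$ norm of a first derivative, all uniformly bounded and containing at least one factor that tends to zero. An identical decomposition handles $\inner{f_n}{\mathcal{L}_{\phi_n} g_n} \to \inner{f}{\mathcal{L}_\phi g}$, which establishes (\ref{wloglhs}).

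The only genuine obstacle here is the simultaneous availability of smooth, divergence-free, and tangential approximants of $\phi$: without such a sequence one could not eliminate the boundary integral in the integration by parts. This is precisely what Lemma \ref{eigenfunctions for navier} delivers, and it is the step that makes the argument specific to $\bar{W}^{1,2}_\sigma$ rather than a generic $W^{1,2}$ vector field.
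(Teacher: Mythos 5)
Your proof is correct and follows essentially the same route as the paper: the identity is obtained by the same componentwise integration by parts, with the interior term killed by $\textnormal{div}\,\phi = 0$ and the boundary term by $\phi \cdot \mathbf{n} = 0$ on $\partial\mathscr{O}$, and (\ref{cancellationproperty'}) deduced by setting $g=f$. The only difference is that you make the integration by parts rigorous through smooth approximation (using the eigenbasis of Lemma \ref{eigenfunctions for navier} for $\phi$ and the continuity of the trilinear form via $W^{1,2}\hookrightarrow L^4$), whereas the paper performs the integration by parts directly on the $W^{1,2}$ functions; this is a matter of presentation rather than substance.
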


\begin{proof}
    Of course (\ref{cancellationproperty'}) follows from (\ref{wloglhs}) with the choice $g:= f$ and symmetry of the inner product, so we just show (\ref{wloglhs}). As stated it is classical that the result holds in the case that $\phi$ has zero trace (see e.g. [\cite{robinson2016three}] Lemma 3.2) by an approximation in $W^{1,2}$ of compactly supported functions, though without that we have to do the integration by parts directly:
    \begin{align*}
    \inner{\mathcal{L}_{\phi} f}{g} &= \sum_{j=1}^2\sum_{l=1}^2\inner{\phi^j\partial_jf^l}{g^l}_{L^2(\mathscr{O};\R)}\\
    &= \sum_{j=1}^2\sum_{l=1}^2\left(\inner{\phi^j\partial_jf^l}{g^l}_{L^2(\mathscr{O};\R)} + \inner{\partial_j\phi^jf^l}{g^l}_{L^2(\mathscr{O};\R)}\right)\\
    &= \sum_{j=1}^2\sum_{l=1}^2\inner{\partial_j(\phi^jf^l)}{g^l}_{L^2(\mathscr{O};\R)}\\
    &= -\sum_{j=1}^2\sum_{l=1}^2\inner{\phi^jf^l}{\partial_jg^l}_{L^2(\mathscr{O};\R)} + \sum_{j=1}^2\sum_{l=1}^2\inner{\phi^jf^l}{g^l\mathbf{n}^j}_{L^2(\partial \mathscr{O};\R)}\\
    &= -\inner{f}{\mathcal{L}_{\phi} g}
\end{align*}
    where we have used that $\sum_{j=1}^2\partial_j \phi^j = 0$ (divergence-free) in $\mathscr{O}$ and $\sum_{j=1}^2\phi^j \mathbf{n}^j = 0$ ($\phi \cdot \mathbf{n}=0$) on $\partial \mathscr{O}$.

\end{proof}

\subsection{Stochastic Navier-Stokes Equations} \label{sub salt}

We first introduce the diffusion operator $B$ from the equations (\ref{number2equationSALT}), (\ref{projected strato Salt}), (\ref{number2equationSALT hyper}) and (\ref{projected strato Salt hyperdiss}) by its action on the basis vectors $(e_i)$ of $\mathfrak{U}$. This is sufficient to define $B$ on the entirety of $\mathfrak{U}$ as shown in [\cite{goodair2022stochastic}] Subsection 2.2. Denoting $B(e_i) = B_i$, define
$$B_i:f \mapsto \mathcal{L}_{\xi_i}f + \mathcal{T}_{\xi_i}f, \qquad  \mathcal{T}_{g}f := \sum_{j=1}^2 f^j\nabla g^j$$ where we shall always impose that $\xi_i \in L^2_{\sigma}$, along with some smoothness and decay stated in the relevant results hereafter. Whilst we do prove existence results for the genuine Stratonovich forms, it is much preferred to work with the corresponding It\^{o} forms
\begin{equation} \label{projected Ito Salt}
    u_t = u_0 - \int_0^t\mathcal{P}\mathcal{L}_{u_s}u_s\ ds - \nu\int_0^t A u_s\, ds + \frac{1}{2}\int_0^t\sum_{i=1}^\infty \mathcal{P}B_i^2u_s ds - \int_0^t \mathcal{P}Bu_s d\mathcal{W}_s 
\end{equation}
and
\begin{equation} \label{projected Ito Salt hyperdiss}
  u_t = u_0 - \int_0^t\mathcal{P}\mathcal{L}_{u_s}u_s\ ds - \nu\int_0^t A^\beta u_s\, ds + \frac{1}{2}\int_0^t\sum_{i=1}^\infty \mathcal{P}B_i^2u_s ds - \int_0^t \mathcal{P}Bu_s d\mathcal{W}_s
\end{equation}
for a conversion motivated by Theorem \ref{theorem for ito strat conversion} in the appendix. Fundamental properties of the operator $B_i$ are proven in [\cite{goodair20233d}] Subsection 2.3, some of which we list now, though a complete description is deferred to [\cite{goodair20233d}]. Firstly for $k = 0, 1, 2, \dots$, there exists a constant $c$ such that  
\begin{align}
    \label{T_ibound}\norm{\mathcal{T}_{\xi_i}f}_{W^{k,2}}^2 &\leq  c \norm{\xi_i}^2_{W^{k+1,\infty}}\norm{f}^2_{W^{k,2}}\\
    \label{L_ibound} \norm{\mathcal{L}_{\xi_i}f}_{W^{k,2}}^2 &\leq c\norm{\xi_i}^2_{W^{k,\infty}}\norm{f}^2_{W^{k+1,2}}\\
    \label{boundsonB_i} \norm{B_if}_{W^{k,2}}^2 &\leq c\norm{\xi_i}^2_{W^{k+1,\infty}}\norm{f}^2_{W^{k+1,2}}
\end{align}
for $f, \xi$ as required by the right hand side. Moreover for $\xi_i \in W^{1,\infty}$, $\mathcal{T}_{\xi_i}$ is a bounded linear operator on $L^2$ so has adjoint $\mathcal{T}_{\xi_i}^*$ satisfying the same boundedness. In conjunction with property (\ref{wloglhs}), $\mathcal{L}_{\xi_i}$ is a densely defined operator in $L^2$ with domain of definition $W^{1,2}$, and has adjoint $\mathcal{L}_{\xi_i}^*$ in this space given by $-\mathcal{L}_{\xi_i}$ with same dense domain of definition. Likewise then $B_i^*$ is the densely defined adjoint $-\mathcal{L}_{\xi_i} + \mathcal{T}_{\xi_i}^*$. We also note from [\cite{goodair20233d}] Lemma 2.7 that $\mathcal{P}B_i = \mathcal{P}B_i\mathcal{P}$ hence $\mathcal{P}B_i^2 = (\mathcal{P}B_i)^2$, which is critical in the equivalence of the forms (\ref{projected Ito Salt}), (\ref{number2equationSALT}) as well as the high order estimates of Subsection \ref{subs noise estimates}. We also recall [\cite{goodair20233d}] Proposition 5.2, where the commutator $$[\Delta,B_i]:= \Delta B_i - B_i\Delta$$ was explicitly shown to be of second order, bounded by a constant $c$ such that for every $f\in W^{3,2}$, \begin{equation} \label{commutator bound} \norm{[\Delta,B_i]f}^2 \leq c\norm{\xi_i}_{W^{3,\infty}}^2\norm{f}_{W^{2,2}}^2.\end{equation}
One can extend this result to higher orders, such that for $f \in W^{k+3,2}$, \begin{equation} \label{commutator bound higher order} \norm{[\Delta,B_i]f}_{W^{k,2}}^2 \leq c\norm{\xi_i}_{W^{k+3,\infty}}^2\norm{f}_{W^{k+2,2}}^2.\end{equation}
It is also shown in [\cite{goodair2023navier}] Subsection 2.3 that for $\xi_i \in W^{2,\infty}$ and $f \in W^{2,2}$, \begin{equation} \label{curl of Bi}
    \textnormal{curl}(B_if) = \mathcal{L}_{\xi_i}(\textnormal{curl}f).
\end{equation}
Moreover in [\cite{goodair20233d}] Proposition 2.6 the following conservation inequalities are proven:
    \begin{align}
    \inner{B_i^2f}{f}_{W^{k,2}} +  \norm{B_if}_{W^{k,2}}^2 &\leq c\norm{\xi_i}_{W^{k+2,\infty}}^2\norm{f}_{W^{k,2}}^2 \label{combinedterminenergyinequality},\\
    \inner{B_if}{f}_{W^{k,2}}^2 &\leq c\norm{\xi_i}^2_{W^{k+1,\infty}}\norm{f}^4_{W^{k,2}}. \label{finalboundinderivativeproof}
\end{align}

\section{High Order Estimates and a Variational Framework} \label{section high order estimates}

In Subsection \ref{subs frac} we introduce Hilbert Spaces defined through the spectrum of the Stokes Operator. The fact that the noise belongs to these spaces and satisfies sufficient estimates in them is demonstrated in Subsection \ref{subs noise estimates}. An application of the variational framework from [\cite{goodair2024weak}] for the fully hyperdissipative equation in these spaces is presented in Subsection \ref{subs reg of hyper}, obtaining strong solutions.

\subsection{Fractional Powers of the Stokes Operator} \label{subs frac}

We now introduce powers of the Stokes Operator $A$, alongside associated spaces on which inner products relative to $A$ can be defined. These will prove necessary to handle the Stokes Operator in energy estimates. 

\begin{definition}
    For every $s \geq 0$, we define $D(A^s)$ as the subspace of functions $f \in L^2_{\sigma}$ such that $$\sum_{k=1}^\infty \bar{\lambda}_k^{2s}\inner{f}{\bar{a}_k}^2 < \infty.$$
    We define the mapping $A^s: D(A^s) \rightarrow L^2_{\sigma}$ by $$A^s: f \mapsto \sum_{k=1}^\infty \bar{\lambda}_k^s\inner{f}{\bar{a}_k}\bar{a}_k$$ and associated inner product and norm on $D(A^s)$ by $$ \inner{f}{g}_{A^s} = \inner{A^sf}{A^sg}, \qquad \norm{f}_{A^s}^2 = \inner{f}{f}_{A^s}.$$
\end{definition}

\begin{remark}
    There is an implicit dependency on $\alpha$ in $A^s$ through $(\bar{a}_k)$, $(\bar{\lambda}_k)$. 
\end{remark}

Some trivial but important results are collected in Lemma \ref{lemma for passing stokes powers}.

\begin{lemma} \label{lemma for passing stokes powers}
We have the following:
\begin{enumerate}
    \item \label{first item} If $f \in D(A^s)$ then $f \in D(A^r)$ for every $0 \leq r < s$;
    \item \label{second item} Let $f,g \in D(A^{2s})$. Then for any $p,q \geq 0$ such that $p + q = 2s$, we have that $$\inner{f}{g}_{A^s} = \inner{A^pf}{A^qg};$$
    \item \label{third item} For $0 \leq r < s$ and $f \in D(A^r)$, we have that $f \in D(A^s)$ if and only if $A^rf \in D(A^{s-r})$;
    \item For $0 \leq r < s$ and $f \in D(A^s)$, we have that $A^sf = A^{s-r}A^rf = A^rA^{s-r}f$.
\end{enumerate}
    
\end{lemma}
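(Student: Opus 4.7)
The proof is essentially a spectral theory bookkeeping exercise: all four items follow directly from the defining series expansions, together with two facts from Lemma \ref{eigenfunctions for navier}, namely that $(\bar{a}_k)$ is orthonormal in $L^2_\sigma$ and that $\bar{\lambda}_k>0$ with $\bar{\lambda}_k\to\infty$. The approach throughout is to compute Fourier coefficients with respect to $(\bar{a}_k)$ and apply Parseval's identity in $L^2_\sigma$.

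For item \ref{first item}, I would use that $\bar{\lambda}_k\to\infty$ forces the existence of some $N$ with $\bar{\lambda}_k\geq 1$ for all $k\geq N$, so that $\bar{\lambda}_k^{2r}\leq \bar{\lambda}_k^{2s}$ for such $k$; for the finitely many remaining indices one controls $\bar{\lambda}_k^{2r}\inner{f}{\bar{a}_k}^2$ by a constant times $\norm{f}^2$ via Bessel's inequality. Summing gives a bound of the form
\[
\sum_{k=1}^\infty \bar{\lambda}_k^{2r}\inner{f}{\bar{a}_k}^2 \leq C\norm{f}^2 + \sum_{k=1}^\infty \bar{\lambda}_k^{2s}\inner{f}{\bar{a}_k}^2 < \infty.
\]
For item \ref{third item}, the key observation is that the Fourier coefficients of $A^rf$ are given by $\inner{A^rf}{\bar{a}_j}=\bar{\lambda}_j^r\inner{f}{\bar{a}_j}$, which is immediate from orthonormality of the basis and the defining series of $A^r f$. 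Substituting into the membership condition for $D(A^{s-r})$ yields
\[
\sum_j \bar{\lambda}_j^{2(s-r)}\inner{A^rf}{\bar{a}_j}^2 = \sum_j \bar{\lambda}_j^{2s}\inner{f}{\bar{a}_j}^2,
\]
so one side is finite iff the other is. One has to check first that $A^rf$ genuinely lies in $L^2_\sigma$, which follows from $f\in D(A^r)$ and Parseval.

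Items \ref{second item} and 4 are then direct computations. For item \ref{second item}, item \ref{first item} guarantees $f\in D(A^p)$ and $g\in D(A^q)$, so the $L^2_\sigma$ inner product can be expanded via Parseval, using $\inner{A^pf}{\bar{a}_k}=\bar{\lambda}_k^p\inner{f}{\bar{a}_k}$ and similarly for $g$, to obtain
\[
\inner{A^pf}{A^qg} = \sum_{k=1}^\infty \bar{\lambda}_k^{p+q}\inner{f}{\bar{a}_k}\inner{g}{\bar{a}_k} = \sum_{k=1}^\infty \bar{\lambda}_k^{2s}\inner{f}{\bar{a}_k}\inner{g}{\bar{a}_k} = \inner{A^sf}{A^sg}.
\]
Item 4 is analogous: $A^{s-r}A^rf$ is computed by inserting the Fourier coefficients of $A^rf$ into the series defining $A^{s-r}$, and the product of powers collapses to $\bar{\lambda}_k^s$, recovering $A^sf$. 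Symmetry in $r$ and $s-r$ gives the remaining equality.

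There is no real obstacle here — the only subtlety is to avoid circularity by establishing item \ref{first item} first so that all the intermediate operators $A^p$, $A^q$, $A^r$ are well defined on $f$ (and $g$) before invoking them in items \ref{second item}–4. Everything else is a convergent series manipulation justified by Parseval in the Hilbert space $L^2_\sigma$.
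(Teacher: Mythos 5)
Your proposal is correct and follows essentially the same route as the paper: every item is reduced to a Parseval/orthonormal-basis computation with the eigenfunctions $(\bar{a}_k)$, using $\bar{\lambda}_k \to \infty$ to handle the tail in item \ref{first item} and the coefficient identity $\inner{A^rf}{\bar{a}_k} = \bar{\lambda}_k^r\inner{f}{\bar{a}_k}$ for the rest. The paper's proof is just a terser version of the same bookkeeping, so nothing further is needed.
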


\begin{proof}
    We prove the statements in turn:
    \begin{enumerate}
        \item As $(\bar{\lambda}_k) \rightarrow \infty$ then eventually the sequence is greater than $1$, so the tail end of the sum increases with $s$. 
        \item We have that \begin{align*}
        \inner{f}{g}_{A^s} &= \inner{\sum_{k=1}^\infty \bar{\lambda}_k^s\inner{f}{\bar{a}_k}\bar{a}_k}{\sum_{j=1}^\infty \bar{\lambda}_j^s\inner{g}{\bar{a}_j}\bar{a_j}} = \sum_{k=1}^\infty \bar{\lambda}_k^{2s}\inner{f}{\bar{a}_k}\inner{g}{\bar{a}_k}\\ &= \inner{\sum_{k=1}^\infty \bar{\lambda}_k^p\inner{f}{\bar{a}_k}\bar{a}_k}{\sum_{j=1}^\infty \bar{\lambda}_j^q\inner{g}{\bar{a}_j}\bar{a_j}} = \inner{A^pf}{A^qg}.
    \end{align*}
    \item Observe that
    \begin{align*}
        \sum_{k=1}^\infty \bar{\lambda}_k^{2s}\inner{f}{\bar{a}_k}^2 = \sum_{k=1}^\infty \bar{\lambda}_k^{2(s-r)}\bar{\lambda}_k^{2r}\inner{f}{\bar{a}_k}^2  = \sum_{k=1}^\infty \bar{\lambda}_k^{2(s-r)}\inner{f}{A^r\bar{a}_k}^2=\sum_{k=1}^\infty \bar{\lambda}_k^{2(s-r)}\inner{A^rf}{\bar{a}_k}^2
    \end{align*}
employing item \ref{second item}.
    \item Using that both $A^rf \in D(A^{s-r})$ and $A^{s-r}f \in D(A^r)$ from item \ref{first item}, the result holds as in the proof of \ref{third item}.
    \end{enumerate}

\end{proof}

We shall frequently use these properties without explicit reference to the lemma. For further analysis in these spaces, we establish some base cases.

\begin{lemma} \label{lemma for equivalence of 2 inner product}
    The bilinear form $\inner{f}{g}_2:= \inner{Af}{Ag}$ defines an inner product on $\bar{W}^{2,2}_{\alpha}$ equivalent to the standard $W^{2,2}$ inner product. 
\end{lemma}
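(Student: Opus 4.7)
The bilinearity and symmetry of $\inner{\cdot}{\cdot}_2$ follow immediately from those of the ambient $L^2$ inner product and the linearity of $A$, so the only content in the first half is positive definiteness. The plan is to use the spectral representation: since $\bar{W}^{2,2}_{\alpha} \subset L^2_{\sigma}$, any $f \in \bar{W}^{2,2}_{\alpha}$ admits the expansion $f = \sum_{k}\inner{f}{\bar{a}_k}\bar{a}_k$ in $L^2$, and as $A = -\mathcal{P}\Delta$ agrees with its spectral form on $\bar{W}^{2,2}_{\alpha}$, Parseval gives $\norm{Af}^2 = \sum_{k}\bar{\lambda}_k^2\inner{f}{\bar{a}_k}^2$. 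Since every $\bar{\lambda}_k$ is strictly positive by Lemma \ref{eigenfunctions for navier}, vanishing of $\norm{Af}$ forces every coefficient $\inner{f}{\bar{a}_k}$ to vanish, hence $f = 0$ by completeness of $(\bar{a}_k)$ in $L^2_{\sigma}$.

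For the equivalence with $\norm{\cdot}_{W^{2,2}}$, the upper bound is the easy half: writing $A = -\mathcal{P}\Delta$ and using that $\mathcal{P}$ is a contraction on $L^2$ one has $\norm{Af} \leq \norm{\Delta f} \leq c\norm{f}_{W^{2,2}}$ directly from the definition of the $W^{2,2}$ norm, uniformly in $f \in \bar{W}^{2,2}_{\alpha}$.

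The reverse bound is where the real work sits, and I expect this to be the main obstacle. My approach is to recast the identity $Af = g$ as a Stokes boundary value problem and invoke classical elliptic regularity. Precisely, given $f \in \bar{W}^{2,2}_{\alpha}$ put $g := Af \in L^2_{\sigma}$; by Remark \ref{new first labelled remark} the $L^2$ function $-\Delta f - g$ lies in the orthogonal complement of $L^2_{\sigma}$ and therefore equals $\nabla \pi$ for some $\pi \in W^{1,2}(\mathscr{O};\R)$. Thus $f$ satisfies $-\Delta f + \nabla \pi = g$ in $\mathscr{O}$ together with $\textnormal{div}\, f = 0$ and the full Navier boundary data encoded in $\bar{W}^{2,2}_{\alpha}$. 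Standard regularity for this system on a smooth bounded domain — the classical Stokes estimate with Navier slip, as recorded in the literature attached to [\cite{clopeau1998vanishing}] and [\cite{kelliher2006navier}] — produces an inequality of the form $\norm{f}_{W^{2,2}} \leq C\bigl(\norm{g} + \norm{f}\bigr)$.

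The last step is to absorb the lower order term on the right. The spectral gap $\bar{\lambda}_1 > 0$ from Lemma \ref{eigenfunctions for navier}, combined with Parseval, yields $\norm{f}^2 \leq \bar{\lambda}_1^{-2}\norm{Af}^2$, so that $\norm{f}_{W^{2,2}} \leq C'\norm{Af}$. Putting this with the upper bound from the previous paragraph concludes the equivalence. The only non-routine ingredient in the plan is the Stokes elliptic regularity under Navier boundary conditions; once that is taken as an external input, the rest of the argument is bookkeeping on the spectral decomposition.
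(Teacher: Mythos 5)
Your core argument is the same as the paper's: the upper bound $\norm{Af} \leq \norm{\Delta f} \leq c\norm{f}_{W^{2,2}}$ from the $L^2$-contractivity of $\mathcal{P}$, and the lower bound from elliptic regularity for the Stokes system with Navier boundary conditions (the paper cites [\cite{tapia2021stokes}] Theorem 5.10, applied with $\mathbf{f}=A\mathbf{u}$ and $\pi=0$; your reformulation of $Af=g$ as $-\Delta f + \nabla\pi = g$ via Remark \ref{new first labelled remark} is a correct way to set up the same input). So the substance of the lemma is handled correctly.

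One caution about your auxiliary steps. Both your positive-definiteness paragraph and your absorption of the lower-order term invoke ``$A$ agrees with its spectral form on $\bar{W}^{2,2}_{\alpha}$'' to write $\norm{Af}^2 = \sum_k \bar{\lambda}_k^2\inner{f}{\bar{a}_k}^2$. At this point in the paper that identity is not yet available: it is the content of Proposition \ref{prop norm equivalence for W22}, whose proof \emph{uses} the present lemma (to get the Cauchy property of $\bar{\mathcal{P}}_nf$ in $W^{2,2}$), and the remark immediately following the lemma explicitly warns against conflating $A$ with $A^1$ here. As written, your argument is therefore circular. The gap is repairable without circularity: the identity $\inner{Af}{\bar{a}_k} = \inner{f}{A\bar{a}_k} = \bar{\lambda}_k\inner{f}{\bar{a}_k}$ follows from two applications of Lemma \ref{greens for navier} alone, and then Parseval for $Af \in L^2_{\sigma}$ gives what you want. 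But note also that neither step is actually needed: positive definiteness is an immediate consequence of the two-sided norm equivalence, and if the Stokes estimate is taken in the form $\norm{f}_{W^{2,2}} \leq c\norm{Af}$ (as the paper does) there is no lower-order term to absorb. If your version of the elliptic estimate does carry the term $\norm{f}$, you should justify $\norm{f} \leq \bar{\lambda}_1^{-1}\norm{Af}$ via the Green's-identity route above rather than by appeal to the not-yet-established spectral representation.
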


\begin{remark}
    It is tempting to immediately say that $\inner{\cdot}{\cdot}_2 = \inner{\cdot}{\cdot}_{A^1}$, but we want to rigorously build the equivalence of $A^1$ and $A$. 
\end{remark}

\begin{proof}
    We must show the existence of constants $c_1$, $c_2$ such that for all $f \in \bar{W}^{2,2}_{\alpha}$, $$c_1\norm{f}_{W^{2,2}}^2 \leq \norm{f}_2^2 \leq c_2\norm{f}_{W^{2,2}}^2.$$
The constant $c_2$ can in fact be taken as $2$, as $$\norm{f}_2^2 \leq \norm{\Delta f}^2 \leq 2\norm{f}_{W^{2,2}}^2.$$ The existence of such a $c_1$ is much more challenging and relies on estimates of the Stokes equation with the Navier boundary conditions, which has been proven in [\cite{tapia2021stokes}] Theorem 5.10. We use, in their notation, that $\mathbf{f} = A\mathbf{u}$ is a solution of the Stokes problem with $\pi = 0$, which gives the result.
\end{proof}

Following the remark, we connect this space with $D(A^1)$.

\begin{proposition} \label{prop norm equivalence for W22}
    We have that $D(A^1) = \bar{W}^{2,2}_{\alpha}$, that $A^1 = A$ on this space, and that $\inner{\cdot}{\cdot}_{A^1}$ is equivalent to the standard $W^{2,2}$ inner product. 
\end{proposition}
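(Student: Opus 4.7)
The plan is to show the three assertions in order: $\bar{W}^{2,2}_\alpha\subseteq D(A^1)$ with $A^1=A$ on it, the reverse inclusion $D(A^1)\subseteq \bar{W}^{2,2}_\alpha$, and finally the norm equivalence by invoking Lemma \ref{lemma for equivalence of 2 inner product}.

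First I would establish the forward inclusion. The key preliminary observation is that $A$ is symmetric on $\bar{W}^{2,2}_\alpha$: for any $f,g\in\bar{W}^{2,2}_\alpha$, apply Lemma \ref{greens for navier} twice to obtain
\[
\inner{Af}{g}=-\inner{\Delta f}{g}=\inner{f}{g}_1-\inner{(\kappa-\alpha)f}{g}_{L^2(\partial\mathscr{O};\R^2)}=\inner{f}{Ag},
\]
where I used $\mathcal{P}g=g$ to drop the projector against $\Delta f$. In particular, taking $g=\bar{a}_k\in\bar{W}^{2,2}_\alpha$ yields $\inner{Af}{\bar{a}_k}=\bar{\lambda}_k\inner{f}{\bar{a}_k}$. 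Since $Af\in L^2_\sigma$ and $(\bar{a}_k)$ is an orthonormal basis of $L^2_\sigma$, Parseval gives
\[
\norm{Af}^2=\sum_{k=1}^\infty \bar{\lambda}_k^2\inner{f}{\bar{a}_k}^2<\infty,
\]
so $f\in D(A^1)$, and the expansion $Af=\sum_k\bar{\lambda}_k\inner{f}{\bar{a}_k}\bar{a}_k=A^1 f$ follows.

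Next I would prove the reverse inclusion, which is the main obstacle and relies on elliptic regularity for the Stokes problem with Navier boundary conditions. Fix $f\in D(A^1)$ and set $g:=A^1 f\in L^2_\sigma$. By the Stokes regularity result in \cite{tapia2021stokes} (Theorem 5.10), exactly as cited in the proof of Lemma \ref{lemma for equivalence of 2 inner product}, there exists $u\in\bar{W}^{2,2}_\alpha$ and a pressure such that $-\Delta u+\nabla\pi=g$; projecting via $\mathcal{P}$ and using $\mathcal{P}\nabla\pi=0$ (Remark \ref{new first labelled remark}) together with $\mathcal{P}g=g$ gives $Au=g$. By the forward inclusion already proven, $A^1 u = Au = g = A^1 f$, which means $\sum_k\bar{\lambda}_k\inner{u-f}{\bar{a}_k}\bar{a}_k=0$; since each $\bar{\lambda}_k>0$ (Lemma \ref{eigenfunctions for navier}) and $(\bar{a}_k)$ is an orthonormal basis of $L^2_\sigma$, this forces $\inner{u-f}{\bar{a}_k}=0$ for all $k$, hence $u=f$ in $L^2_\sigma$. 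Therefore $f=u\in\bar{W}^{2,2}_\alpha$.

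Finally, combining the two inclusions, $A^1 f=Af$ on $D(A^1)=\bar{W}^{2,2}_\alpha$, so
\[
\inner{f}{g}_{A^1}=\inner{A^1 f}{A^1 g}=\inner{Af}{Ag}=\inner{f}{g}_2,
\]
and the norm equivalence with $W^{2,2}$ is then immediate from Lemma \ref{lemma for equivalence of 2 inner product}. The genuinely nontrivial step throughout is the reverse inclusion, since the abstract definition of $D(A^1)$ as a summability condition on spectral coefficients contains no a priori Sobolev regularity information; the Stokes regularity theory of \cite{tapia2021stokes} is what converts the spectral bound into classical $W^{2,2}$ regularity together with the Navier boundary condition.
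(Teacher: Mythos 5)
Your proof is correct, and while the inclusion $\bar{W}^{2,2}_{\alpha}\subseteq D(A^1)$ (together with $A^1=A$) is carried out exactly as in the paper --- symmetrising $\inner{Af}{\bar a_k}=\bar\lambda_k\inner{f}{\bar a_k}$ via Lemma \ref{greens for navier} and applying Parseval to $Af\in L^2_\sigma$ --- your treatment of the reverse inclusion is genuinely different. The paper shows that the partial sums $\bar{\mathcal{P}}_nf$ form a Cauchy sequence in the $\norm{\cdot}_2$ norm (computable explicitly on finite linear combinations of eigenfunctions), invokes the equivalence of Lemma \ref{lemma for equivalence of 2 inner product} and the closedness of $\bar{W}^{2,2}_{\alpha}$ in $W^{2,2}$ (Remark \ref{first labelled remark}) to conclude that the $L^2_\sigma$-limit $f$ lies in $\bar{W}^{2,2}_{\alpha}$; this only uses the \emph{a priori estimate} direction of the Stokes regularity theory, and has the side benefit of exhibiting $\bar{\mathcal{P}}_nf\to f$ in $W^{2,2}$, which is how the paper then identifies $A^1$ with $A$. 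You instead argue by solvability: you invoke the \emph{existence} half of the well-posedness theorem of [\cite{tapia2021stokes}] to produce $u\in\bar{W}^{2,2}_{\alpha}$ with $Au=A^1f$, and then use strict positivity of the eigenvalues to deduce injectivity of $A^1$ and hence $u=f$. This is a clean and valid argument, but note that it leans on a strictly stronger external input (surjectivity of $A:\bar{W}^{2,2}_{\alpha}\to L^2_\sigma$) than the paper needs, and that one cannot substitute the spectral series $\sum_k\bar\lambda_k^{-1}\inner{g}{\bar a_k}\bar a_k$ for the PDE existence theorem here without circularity, since showing that series lands in $\bar{W}^{2,2}_{\alpha}$ is precisely the inclusion being proved. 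Both routes are sound; the paper's is slightly more economical in its hypotheses and yields the strong $W^{2,2}$ convergence of the Galerkin truncations as a by-product.
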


\begin{proof}
    We begin by showing the inclusion both ways:
    \begin{itemize}
        \item[$\subseteq$:] Take $f \in D(A^1)$. To show that $f \in \bar{W}^{2,2}_{\alpha}$, it is sufficient to show that the sequence $(\bar{\mathcal{P}}_nf)$ is convergent in $\bar{W}^{2,2}_{\alpha}$, as the limit must agree with the limit in $L^2_{\sigma}$ which is $f$. In particular, it is sufficient to show that the sequence $(\bar{\mathcal{P}}_nf)$ is Cauchy in $\bar{W}^{2,2}_{\alpha}$, and from Lemma \ref{lemma for equivalence of 2 inner product} it is sufficient to show the Cauchy property in the $\norm{\cdot}_2$ norm. For any $m < n$,
        $$\norm{\bar{\mathcal{P}}_nf - \bar{\mathcal{P}}_mf}_2^2 = \sum_{k=m+1}^n\bar{\lambda}_k^2\inner{f}{\bar{a}_k}^2 \leq  \sum_{k=m+1}^\infty \bar{\lambda}_k^2\inner{f}{\bar{a}_k}^2$$
        which approaches zero as $m \rightarrow \infty$ given that $f \in D(A^1)$, justifying the Cauchy property.

        \item[$\supseteq$:] Take $f \in \bar{W}^{2,2}_{\alpha}$. Then $$\bar{\lambda}_k^2\inner{f}{\bar{a}_k}^2 =  \inner{f}{A\bar{a}_k}^2.$$
        From Lemma \ref{greens for navier}, we have both that
        $$\inner{f}{A\bar{a}_k} = -\inner{f}{\Delta \bar{a}_k} = \inner{f}{\bar{a}_k}_1 + \inner{f}{(\alpha - \kappa)\bar{a}_k}_{L^2(\partial \mathscr{O}; \R^2)}$$
        and 
        $$\inner{Af}{\bar{a}_k} = -\inner{\Delta f}{ \bar{a}_k} = \inner{f}{\bar{a}_k}_1 + \inner{(\alpha - \kappa)f}{\bar{a}_k}_{L^2(\partial \mathscr{O}; \R^2)}$$
        so in particular $$\inner{f}{A\bar{a}_k} =  \inner{Af}{\bar{a}_k}.$$
        In total then,
        $$\sum_{k=1}^\infty \bar{\lambda}_k^2\inner{f}{\bar{a}_k}^2 = \sum_{k=1}^\infty \inner{f}{A\bar{a}_k}^2 = \sum_{k=1}^\infty \inner{Af}{\bar{a}_k}^2.$$
        As $f \in \bar{W}^{2,2}_{\alpha}$ then $Af \in L^2_{\sigma}$, so the above sum is finite and the inclusion is proven.
    \end{itemize}
    We have shown that $D(A^1) = \bar{W}^{2,2}_{\alpha}$. The fact that $A^1 = A$ on this space follows from the argument in the first inclusion which shows that $(\bar{\mathcal{P}}_nf)$ is convergent to $f$ in $\bar{W}^{2,2}_{\alpha}$, so in particular $f$ has the representation $$f = \sum_{k=1}^\infty \inner{f}{\bar{a}_k}\bar{a}_k$$ as a limit in $\bar{W}^{2,2}_{\alpha}$. Thus $$Af = \sum_{k=1}^\infty \bar{\lambda}_k \inner{f}{\bar{a}_k}\bar{a}_k$$ as a limit in $L^2_{\sigma}$, which is equal to $A^1f$. The inner product equivalence now follows from Lemma \ref{lemma for equivalence of 2 inner product}.
\end{proof}
Accordingly, we shall denote $A^1$ by simply $A$.

\begin{proposition} \label{prop for W12 norm equivalence Ahalf}
    Let $\alpha \geq \kappa$ everywhere on $\partial \mathscr{O}$. Then $D(A^{\frac{1}{2}}) = \bar{W}^{1,2}_{\sigma}$ and $\inner{\cdot}{\cdot}_{A^{\frac{1}{2}}}$ is equivalent to $\inner{\cdot}{\cdot}_1$. 
\end{proposition}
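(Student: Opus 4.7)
The plan is to leverage the Green's identity from Lemma \ref{greens for navier} together with the $A^{1/2}$-characterisation of the norm on $D(A)$ to derive an explicit identity relating the $A^{1/2}$-norm and the $\inner{\cdot}{\cdot}_1$-norm modulo a boundary term, and then to extend this identity by density to the whole space $\bar{W}^{1,2}_{\sigma}$. The sign condition $\alpha \geq \kappa$ enters exactly to keep the boundary term nonnegative, which is what couples the two norms.

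First I would establish the key identity on the smooth space $\bar{W}^{2,2}_{\alpha} = D(A)$: for such $f$, Proposition \ref{prop norm equivalence for W22} together with item \ref{second item} of Lemma \ref{lemma for passing stokes powers} gives $\norm{f}_{A^{1/2}}^2 = \inner{Af}{f}$, and then Lemma \ref{greens for navier} applied to $f$ tested against itself yields
\begin{equation*}
\norm{f}_{A^{1/2}}^2 \;=\; \norm{f}_1^2 + \inner{(\alpha-\kappa)f}{f}_{L^2(\partial \mathscr{O}; \R^2)}.
\end{equation*}
Under $\alpha \geq \kappa$ the boundary contribution is nonnegative, so in particular $\norm{f}_1 \leq \norm{f}_{A^{1/2}}$ on $\bar{W}^{2,2}_{\alpha}$.

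Next I would prove the inclusion $\bar{W}^{1,2}_{\sigma} \subseteq D(A^{1/2})$. Given $f \in \bar{W}^{1,2}_{\sigma}$, consider the partial sums $\bar{\mathcal{P}}_n f \in \bar{V}_n \subset \bar{W}^{2,2}_{\alpha}$, which converge to $f$ in $\bar{W}^{1,2}_{\sigma}$ by Lemma \ref{eigenfunctions for navier}. Apply the identity from Step 1 to each $\bar{\mathcal{P}}_n f$:
\begin{equation*}
\sum_{k=1}^n \bar{\lambda}_k \inner{f}{\bar{a}_k}^2 = \norm{\bar{\mathcal{P}}_n f}_1^2 + \inner{(\alpha-\kappa)\bar{\mathcal{P}}_n f}{\bar{\mathcal{P}}_n f}_{L^2(\partial \mathscr{O}; \R^2)}.
\end{equation*}
The first term on the right converges to $\norm{f}_1^2$, while the boundary term is bounded using $\alpha - \kappa \in L^{\infty}(\partial \mathscr{O})$ together with the trace inequality (\ref{inequality from Lions}) and Poincar\'e to give control by $\norm{f}_1^2$. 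Taking $n \to \infty$ the left side is a monotone sum which therefore converges to a finite limit, so $f \in D(A^{1/2})$.

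For the reverse inclusion $D(A^{1/2}) \subseteq \bar{W}^{1,2}_{\sigma}$, take $f \in D(A^{1/2})$ and again look at $\bar{\mathcal{P}}_n f$. For $m < n$ the difference $g := \bar{\mathcal{P}}_n f - \bar{\mathcal{P}}_m f$ lies in $\bar{V}_n \subset \bar{W}^{2,2}_{\alpha}$, so the identity of Step 1 and the assumption $\alpha \geq \kappa$ yield
\begin{equation*}
\norm{g}_1^2 \;\leq\; \norm{g}_{A^{1/2}}^2 \;=\; \sum_{k=m+1}^n \bar{\lambda}_k \inner{f}{\bar{a}_k}^2 \;\xrightarrow{m\to\infty}\; 0.
\end{equation*}
Hence $(\bar{\mathcal{P}}_n f)$ is Cauchy in $\bar{W}^{1,2}_{\sigma}$, converging to some $\tilde f \in \bar{W}^{1,2}_{\sigma}$; since $\bar{\mathcal{P}}_n f \to f$ in $L^2_{\sigma}$ already, $\tilde f = f$, giving $f \in \bar{W}^{1,2}_{\sigma}$. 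Finally, the norm equivalence is concluded: the lower bound $\norm{f}_1 \leq \norm{f}_{A^{1/2}}$ extends to all of $\bar{W}^{1,2}_{\sigma}$ by passing to the limit in Step 1 applied to $\bar{\mathcal{P}}_n f$, and the upper bound follows by combining the identity with $\norm{(\alpha-\kappa)^{1/2}f}_{L^2(\partial \mathscr{O};\R^2)}^2 \leq c\norm{f}\norm{f}_{W^{1,2}} \leq c'\norm{f}_1^2$, where the first step uses (\ref{inequality from Lions}) and the second step uses the Poincar\'e inequality on $\bar{W}^{1,2}_{\sigma}$.

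The main obstacle is the interplay between the abstract spectral definition of $D(A^{1/2})$ and the analytic space $\bar{W}^{1,2}_{\sigma}$: the trace inequality (\ref{inequality from Lions}) is what bridges them, and the sign condition $\alpha \geq \kappa$ is what makes the boundary contribution cooperate to turn an $A^{1/2}$-bound into a $W^{1,2}$-bound (and vice versa via trace control). Without $\alpha \geq \kappa$ the Cauchy transfer in Step 3 would fail, explaining the hypothesis.
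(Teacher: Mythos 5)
Your proof is correct and follows essentially the same route as the paper: the Green's identity of Lemma \ref{greens for navier} gives $\norm{f}_{A^{1/2}}^2 = \norm{f}_1^2 + \inner{(\alpha-\kappa)f}{f}_{L^2(\partial\mathscr{O};\R^2)}$ on the eigenfunction span, the sign condition $\alpha\geq\kappa$ together with the trace inequality (\ref{inequality from Lions}) and Poincar\'e yields the two-sided norm equivalence there, and both inclusions are transferred by Cauchy/limiting arguments with the projections $\bar{\mathcal{P}}_n$. The only (harmless) deviation is in the inclusion $\bar{W}^{1,2}_{\sigma} \subseteq D(A^{\frac{1}{2}})$, where you use monotonicity of the partial sums $\sum_{k\leq n}\bar{\lambda}_k\inner{f}{\bar{a}_k}^2$ together with convergence of $\bar{\mathcal{P}}_nf$ to $f$ in $W^{1,2}$ --- a property of the eigenbasis you should justify explicitly, since it is stronger than mere density of the span --- whereas the paper works with an arbitrary approximating sequence from $\bar{V}$ and only needs density.
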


\begin{proof}
    We again show the inclusion both ways:
    \begin{itemize}
        \item[$\subseteq$:] Through the same arguments as Proposition \ref{prop norm equivalence for W22}, the result would hold if we show the equivalence of $\inner{\cdot}{\cdot}_{A^{\frac{1}{2}}}$ and $\inner{\cdot}{\cdot}_1$ on $\bar{V}:=\bigcup_n \bar{V}_n$. To this end we take $f \in \bar{V}$. Then by Lemma \ref{lemma for passing stokes powers} followed by Lemma \ref{greens for navier},  $$\norm{f}_{A^{\frac{1}{2}}}^2 = \inner{f}{Af} = -\inner{f}{\Delta f} =  \norm{f}^2_1 + \inner{(\alpha - \kappa)f}{f}_{L^2(\partial \mathscr{O}; \R^2)}.$$
    As $\alpha \geq \kappa$ then certainly $\norm{f}_{A^{\frac{1}{2}}}^2 \geq \norm{f}_1^2$. For the reverse, $$\norm{f}_1^2 - \inner{(\kappa - \alpha)f}{f}_{L^2(\partial \mathscr{O};\R^2)} \leq \norm{f}_1^2 + c\norm{\kappa - \alpha}_{L^\infty(\partial \mathscr{O}; \R)}\norm{f}\norm{f}_1 \leq c\norm{f}_1^2 $$
    having used (\ref{inequality from Lions}), where our final constant depends on $\kappa$ and $\alpha$. This justifies the norm equivalence and hence the inclusion.

    \item[$\supseteq$:] Take $f \in \bar{W}^{1,2}_{\sigma}$. Then there exists a sequence $(\phi^n)$ in $\bar{V}$ convergent to $f$ in $\bar{W}^{1,2}_{\sigma}$, hence this sequence is Cauchy in $\bar{W}^{1,2}_{\sigma}$ and established in the previous inclusion, is Cauchy in $D(A^{\frac{1}{2}}).$ In particular, there exists some $m\in \N$ such that for all $n \geq m$, $$\sum_{k=1}^\infty \bar{\lambda}_{k}\inner{\phi^n - \phi^m}{\bar{a}_k}^2 \leq 1.$$
    We use the fact that $(\phi^n)$ is convergent to $f$ in $L^2_{\sigma}$ to deduce that $$\lim_{n \rightarrow \infty}\inner{\phi^n}{\bar{a}_k}^2 = \inner{f}{\bar{a}_k}^2.$$ So for every $l \in \N$,
    \begin{align*}
        \sum_{k=1}^l \bar{\lambda}_{k}\inner{f}{\bar{a}_k}^2 = \lim_{n \rightarrow \infty} \sum_{k=1}^l \bar{\lambda}_{k}\inner{\phi^n}{\bar{a}_k}^2 &\leq \lim_{n \rightarrow \infty}2 \sum_{k=1}^l \bar{\lambda}_{k}\left(\inner{\phi^n - \phi^m}{\bar{a}_k}^2 + \inner{\phi^m}{\bar{a}_k}^2 \right)\\ &\leq 2 + 2\norm{\phi^m}_{A^{\frac{1}{2}}}^2.
    \end{align*}
    Taking the limit in $l$ now gives the result. 
    \end{itemize}
    The equivalence of the norms on $D(A^{\frac{1}{2}}) = \bar{W}^{1,2}_{\sigma}$ now follows from their equivalence on $\bar{V}$ which is dense in the Banach space. 
\end{proof}

In the remaining results of this subsection, we assume that $\alpha \geq \kappa$ so that Proposition \ref{prop for W12 norm equivalence Ahalf} holds. 

\begin{proposition} \label{prop all norm equivalences DA}
    For every $s \geq 0$, $D(A^s)$ is a Hilbert Space equipped with the $\inner{\cdot}{\cdot}_{A^s}$ inner product. For every $m \in \N$, $\inner{\cdot}{\cdot}_{A^{\frac{m}{2}}}$ is equivalent to the standard $W^{m,2}$ inner product on $D(A^{\frac{m}{2}})$.
\end{proposition}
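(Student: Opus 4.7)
The plan has two parts: first establish the Hilbert space structure of $D(A^s)$ via the standard spectral identification with a weighted sequence space, then prove the norm equivalence by induction on $m$.

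For the first part, the map $T : D(A^s) \to \ell^2(\N)$ defined by $(Tf)_k := \bar\lambda_k^s \inner{f}{\bar a_k}$ is a linear isometry onto its image by the very definition of $\norm{\cdot}_{A^s}$. Given a Cauchy sequence $(f^n)$ in $\norm{\cdot}_{A^s}$, the image $(Tf^n)$ is Cauchy in $\ell^2$, so converges to some $(d_k) \in \ell^2$. Since Lemma \ref{eigenfunctions for navier} guarantees $\bar\lambda_k > 0$ with $\bar\lambda_k \to \infty$, hence bounded below away from zero, the candidate limit $f := \sum_{k=1}^\infty (d_k / \bar\lambda_k^s) \bar a_k$ is a convergent series in $L^2_\sigma$ that lies in $D(A^s)$, satisfies $Tf = (d_k)$, and is the limit of $(f^n)$ in $\norm{\cdot}_{A^s}$.

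For the norm equivalence I induct on $m$ in steps of two, so as to handle even and odd orders separately. The base cases $m \in \{0,1,2\}$ follow from the definition of $L^2_\sigma$, from Proposition \ref{prop for W12 norm equivalence Ahalf} (which requires $\alpha \geq \kappa$, in force here) and from Proposition \ref{prop norm equivalence for W22} respectively. For the inductive step from $m$ to $m+2$, take $f \in D(A^{(m+2)/2})$. Lemma \ref{lemma for passing stokes powers} (items \ref{first item}, \ref{third item} and the last item, with $r=1$ and $s = (m+2)/2$) gives $Af \in D(A^{m/2})$ with $A^{(m+2)/2} f = A^{m/2}(Af)$, so
\[
\norm{f}_{A^{(m+2)/2}} = \norm{Af}_{A^{m/2}} \sim \norm{Af}_{W^{m,2}}
\]
by the induction hypothesis. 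It then suffices to prove $\norm{Af}_{W^{m,2}} \sim \norm{f}_{W^{m+2,2}}$ on $D(A^{(m+2)/2})$.

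The direction $\norm{Af}_{W^{m,2}} \lesssim \norm{f}_{W^{m+2,2}}$ is immediate from the continuity of $A = -\mathcal{P}\Delta : W^{m+2,2} \to W^{m,2}$ recorded in Subsection \ref{functional framework subsection}. The main obstacle is the reverse inequality, which is a higher-order elliptic regularity estimate for the Stokes problem with Navier boundary conditions: the base case $m = 0$ is precisely [\cite{tapia2021stokes}] Theorem 5.10 as used in Lemma \ref{lemma for equivalence of 2 inner product}. The general case is obtained by the standard bootstrap available on smooth domains, namely differentiating the Stokes equation $Af = g$ and applying the base estimate iteratively, producing $\norm{f}_{W^{m+2,2}} \lesssim \norm{g}_{W^{m,2}} + \norm{f}_{W^{m+1,2}}$ with the lower-order remainder absorbed via the preceding step of the induction. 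An alternative implementation mirrors the proof of Proposition \ref{prop norm equivalence for W22}: since each $\bar a_k \in C^\infty(\bar{\mathscr{O}})$ lies in every regularity class required, the estimate $\norm{\bar{\mathcal{P}}_n f - \bar{\mathcal{P}}_\ell f}_{W^{m+2,2}} \lesssim \norm{A(\bar{\mathcal{P}}_n f - \bar{\mathcal{P}}_\ell f)}_{W^{m,2}}$, which itself comes from the high-order Stokes regularity applied to finite linear combinations of eigenfunctions, shows that $(\bar{\mathcal{P}}_n f)$ is Cauchy in $W^{m+2,2}$. Its limit agrees with $f$ by uniqueness in $L^2_\sigma$, thereby simultaneously establishing the inclusion $D(A^{(m+2)/2}) \subseteq W^{m+2,2}$ and the sought norm equivalence, and closing the induction.
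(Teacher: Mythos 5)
Your proposal is correct and follows essentially the same route as the paper: completeness is obtained by passing to the $L^2_\sigma$ limit of a Cauchy sequence (your $\ell^2$-isometry phrasing is just a repackaging of the argument from Proposition \ref{prop for W12 norm equivalence Ahalf}), the easy direction of the equivalence comes from the continuity of $\mathcal{P}$ and $\Delta$, and the hard direction rests on the same high-order Stokes regularity estimate $\norm{f}_{W^{k+2,2}} \leq c\norm{Af}_{W^{k,2}}$ from [\cite{tapia2021stokes}]/[\cite{constantin1988navier}]. The only cosmetic difference is that you induct in steps of two via $\norm{f}_{A^{(m+2)/2}} = \norm{Af}_{A^{m/2}}$, whereas the paper steps by one, applying the inductive hypothesis to $Af$ at order $m-1$.
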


\begin{proof}
    The fact that $D(A^s)$ is complete for the $\norm{\cdot}_{A^s}$ norm is exactly as was demonstrated in the second part of Proposition \ref{prop for W12 norm equivalence Ahalf}, using that any Cauchy sequence in $D(A^s)$ is Cauchy hence convergent in $L^2_{\sigma}$. For the equivalence of norms, we have already demonstrated the base cases of $D(A^{\frac{1}{2}})$ and $D(A)$. We will build on this inductively, and first show that for every $m \in \N$ there exists a constant $c$ such that for every $f \in D(A^{\frac{m}{2}})$, $$\norm{f}_{A^{\frac{m}{2}}} \leq c\norm{\cdot}_{W^{m,2}}.$$
    Let us first consider the case where $m=2k$ for $k \in \N$. We make precise that $A^k$ is truly $(-\mathcal{P}\Delta)^k$ on $D(A^k)$, having established that $A^1 = A$ on $D(A^1)$ in Proposition \ref{prop norm equivalence for W22}, and inductively if $A^{k-1} = (-\mathcal{P}\Delta)^{k-1}$ on $D(A^{k-1})$, then for $f \in D(A^k)$ from Lemma \ref{lemma for passing stokes powers}, $A^kf = A^{k-1}Af = (-\mathcal{P}\Delta)^{k}f.$ Using that the Leray Projector is bounded on all $W^{l,2}(\mathscr{O};\R^2)$, and that $\Delta$ is bounded from $W^{l+2,2}(\mathscr{O};\R^2)$ into $W^{l,2}(\mathscr{O};\R^2)$, then the result is clear. In the case $m = 2k-1$ the argument is similar, with a brief additional step: $$\norm{f}_{A^{\frac{2k-1}{2}}} = \norm{A^{\frac{1}{2}}A^{k-1}f} \leq c\norm{A^{k-1}f}_1 \leq c\norm{f}_{W^{2k-1,2}}.$$
    To demonstrate the reverse inequality we again use estimates for the Stokes Equation from [\cite{tapia2021stokes}] Theorem 5.10, extended to higher orders for a smooth domain (see for example [\cite{constantin1988navier}] Remark 3.8), which is that for $f \in \bar{W}^{2,2}_{\alpha}$, \begin{equation} \label{which is htat for}\norm{f}_{W^{k+2,2}} \leq c\norm{Af}_{W^{k,2}}.\end{equation}
    We make the inductive assumption that for all $k \leq m$ and $g \in D(A^{\frac{k}{2}})$, $\norm{g}_{W^{k,2}} \leq c\norm{g}_{A^{\frac{k}{2}}}.$ Then for $f \in D(A^{\frac{m+1}{2}})$, firstly by (\ref{which is htat for}) and secondly through the inductive hypothesis with $g = Af \in D(A^{\frac{m-1}{2}})$, $$\norm{f}_{W^{m+1,2}} \leq c\norm{Af}_{W^{m-1,2}} \leq c\norm{Af}_{A^{\frac{m-1}{2}}} = c\norm{f}_{A^{\frac{m+1}{2}}}.$$

\end{proof}

\begin{lemma} \label{lemma for inclusion of high A}
    We have that $D(A^{\frac{3}{2}}) = W^{3,2} \cap \bar{W}^{2,2}_{\alpha}$, and for $m \in \N$, $m \geq 2$, that $D(A^{\frac{m}{2}}) \subseteq W^{m,2} \cap \bar{W}^{2,2}_{\alpha}$.
\end{lemma}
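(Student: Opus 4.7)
The plan is to handle the inclusion $D(A^{\frac{m}{2}}) \subseteq W^{m,2} \cap \bar{W}^{2,2}_{\alpha}$ for all $m \geq 2$ first, since it is a direct corollary of what has already been proved, and then to establish the reverse inclusion for the specific case $m=3$ by using the identification of $A$ with $-\mathcal{P}\Delta$ together with the characterisation $D(A^{\frac{1}{2}}) = \bar{W}^{1,2}_{\sigma}$.

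For the forward inclusion, fix $f \in D(A^{\frac{m}{2}})$ with $m \geq 2$. Proposition \ref{prop all norm equivalences DA} gives the norm equivalence $\norm{f}_{A^{\frac{m}{2}}} \asymp \norm{f}_{W^{m,2}}$, so in particular $f \in W^{m,2}$. Since $\frac{m}{2} \geq 1$, item \ref{first item} of Lemma \ref{lemma for passing stokes powers} yields $f \in D(A^1) = D(A)$, and Proposition \ref{prop norm equivalence for W22} identifies $D(A)$ with $\bar{W}^{2,2}_{\alpha}$. Thus $f \in W^{m,2} \cap \bar{W}^{2,2}_{\alpha}$, which is all that is required for the general statement.

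For the reverse inclusion in the case $m=3$, take $f \in W^{3,2} \cap \bar{W}^{2,2}_{\alpha}$. Since $f \in \bar{W}^{2,2}_{\alpha} = D(A)$ I already have $f \in D(A^1)$. By item \ref{third item} of Lemma \ref{lemma for passing stokes powers}, membership in $D(A^{\frac{3}{2}})$ reduces to the condition $Af \in D(A^{\frac{1}{2}})$. Now $Af = -\mathcal{P}\Delta f$ by Proposition \ref{prop norm equivalence for W22}; because $f \in W^{3,2}$ one has $\Delta f \in W^{1,2}$, and the Leray Projector is continuous $W^{1,2} \rightarrow W^{1,2}$ (see Subsection \ref{functional framework subsection}), so $\mathcal{P}\Delta f \in W^{1,2}$. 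But $\mathcal{P}$ also lands in $L^2_{\sigma}$ by definition, so $Af \in W^{1,2} \cap L^2_{\sigma} = \bar{W}^{1,2}_{\sigma}$ by Remark \ref{first labelled remark}. Invoking Proposition \ref{prop for W12 norm equivalence Ahalf}, this gives $Af \in D(A^{\frac{1}{2}})$ as needed, and the two inclusions yield the equality $D(A^{\frac{3}{2}}) = W^{3,2} \cap \bar{W}^{2,2}_{\alpha}$.

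The only slightly delicate step is the equality at $m=3$, which succeeds precisely because the single auxiliary boundary condition needed beyond $\bar{W}^{2,2}_{\alpha}$ to reach $D(A^{\frac{3}{2}})$, namely $Af \in \bar{W}^{1,2}_{\sigma}$, amounts only to $\mathcal{P}\Delta f \cdot \mathbf{n} = 0$ on $\partial \mathscr{O}$, and this is supplied automatically by the Leray Projector. For $m \geq 4$ this mechanism no longer suffices, since one would additionally need, for instance, $Af \in \bar{W}^{2,2}_{\alpha}$, which imposes a genuine Navier-type boundary condition on $Af$ that need not hold for an arbitrary element of $W^{m,2} \cap \bar{W}^{2,2}_{\alpha}$; this is why only the inclusion is asserted in that regime.
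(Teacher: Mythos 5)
Your argument for the reverse inclusion at $m=3$ is exactly the paper's: reduce via Lemma \ref{lemma for passing stokes powers} to showing $Af \in D(A^{\frac{1}{2}}) = \bar{W}^{1,2}_{\sigma}$, and observe that $\Delta f \in W^{1,2}$ together with the continuity of $\mathcal{P}$ on $W^{1,2}$ supplies this. Your closing remark about why the mechanism stops at $m=3$ is accurate and a nice addition, though not required.

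Where you diverge is the forward inclusion. The paper does not deduce $f \in W^{m,2}$ by applying the norm equivalence of Proposition \ref{prop all norm equivalences DA} directly to $f$; instead it shows that the Galerkin approximations $(\bar{\mathcal{P}}_nf)$ are Cauchy in $D(A^{\frac{m}{2}})$, hence (by the norm equivalence applied to the smooth finite-dimensional elements $\bar{\mathcal{P}}_nf - \bar{\mathcal{P}}_{n'}f$, for which the $W^{m,2}$ norm is unambiguously defined) Cauchy in $W^{m,2}$, and identifies the $W^{m,2}$ limit with $f$ via the $L^2_{\sigma}$ limit. The reason for this detour is a mild circularity in your shortcut: to write $\norm{f}_{W^{m,2}} \leq c\norm{f}_{A^{\frac{m}{2}}}$ and conclude $f \in W^{m,2}$, you must already know that $f$ has weak derivatives up to order $m$ so that the left-hand side is meaningful. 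If one reads Proposition \ref{prop all norm equivalences DA} as already asserting (via the elliptic estimate \eqref{which is htat for} and induction) that the $W^{m,2}$ inner product is defined on all of $D(A^{\frac{m}{2}})$, your one-line deduction is legitimate, and your route to the boundary condition via $D(A^{\frac{m}{2}}) \subseteq D(A) = \bar{W}^{2,2}_{\alpha}$ is arguably cleaner than the paper's appeal to closedness of $\bar{W}^{2,2}_{\alpha}$ under the $W^{m,2}$ limit. But if the lemma under discussion is meant to be the place where the membership $D(A^{\frac{m}{2}}) \subseteq W^{m,2}$ is actually established, the density argument is the safer formulation; you should at least acknowledge that you are taking the proposition to include the membership claim.
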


\begin{proof}
    The inclusion $D(A^{\frac{m}{2}}) \subseteq W^{m,2} \cap \bar{W}^{2,2}_{\alpha}$ is similar to Propositions \ref{prop norm equivalence for W22} and \ref{prop for W12 norm equivalence Ahalf}, where $(\bar{\mathcal{P}}_nf)$ is Cauchy in $D(A^{\frac{m}{2}})$ hence Cauchy in $W^{m,2}$ by Proposition \ref{prop all norm equivalences DA}. The limit in $W^{m,2}$ agrees with the limit in $L^2_{\sigma}$ which is $f$, and also belongs to $\bar{W}^{2,2}_{\alpha}$ as required. For the reverse inclusion, we use that $f \in W^{3,2} \cap \bar{W}^{2,2}_{\alpha}$ belongs to $D(A)$, so by Lemma \ref{lemma for passing stokes powers} it is sufficient to show that $Af \in D(A^{\frac{1}{2}}) = \bar{W}^{1,2}_{\sigma}$. This is clear however, as $\Delta f \in W^{1,2}$ so $\mathcal{P}\Delta f \in \bar{W}^{1,2}_{\sigma}$. 
\end{proof}

\begin{remark}
    This result is unavailable in the classical no-slip case, as there $D(A^{\frac{1}{2}})$ is equal to $W^{1,2}_{0} \cap L^2_{\sigma}$, and the Leray Projector destroys the zero trace property. 
\end{remark}

\begin{proposition} \label{big boy}
    $\bar{\mathcal{P}}_n$ is self-adjoint on $D(A^s)$. For large enough $n \in \N$ such that $\bar{\lambda}_n \geq 1$,  $f \in D(A^s)$ and $0 \leq r <s$, 
    $$\norm{(I - \bar{\mathcal{P}}_n)f}_{A^{r}}^2 \leq \frac{1}{\bar{\lambda}_n^{2(s-r)}}\norm{f}_{A^s}^2$$
    where $I$ is the identity operator.
\end{proposition}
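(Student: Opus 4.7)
The plan is to prove both statements directly from the spectral representation of $D(A^s)$, exploiting the fact that the $(\bar{a}_k)$ diagonalise every power of $A$ simultaneously.

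For the self-adjointness of $\bar{\mathcal{P}}_n$ on $D(A^s)$, I would take arbitrary $f, g \in D(A^s)$ and expand
$$\bar{\mathcal{P}}_n f = \sum_{k=1}^n \inner{f}{\bar{a}_k}\bar{a}_k, \qquad A^s \bar{a}_k = \bar{\lambda}_k^s \bar{a}_k.$$
Applying item \ref{second item} of Lemma \ref{lemma for passing stokes powers} with $p = 2s$ and $q=0$ (and using $\bar{a}_k \in D(A^r)$ for every $r$ since the expansion is finite), one computes
$$\inner{\bar{\mathcal{P}}_n f}{g}_{A^s} = \inner{A^{2s}\bar{\mathcal{P}}_n f}{g} = \sum_{k=1}^n \bar{\lambda}_k^{2s} \inner{f}{\bar{a}_k} \inner{g}{\bar{a}_k},$$
which is manifestly symmetric in $f$ and $g$, thereby equal to $\inner{f}{\bar{\mathcal{P}}_n g}_{A^s}$.

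For the quantitative estimate, note that $(I - \bar{\mathcal{P}}_n)f$ has spectral representation $\sum_{k=n+1}^{\infty}\inner{f}{\bar{a}_k}\bar{a}_k$, so directly from the definition of the $\norm{\cdot}_{A^r}$ norm,
$$\norm{(I - \bar{\mathcal{P}}_n)f}_{A^r}^2 = \sum_{k=n+1}^{\infty} \bar{\lambda}_k^{2r} \inner{f}{\bar{a}_k}^2.$$
Decomposing $\bar{\lambda}_k^{2r} = \bar{\lambda}_k^{2s}\cdot \bar{\lambda}_k^{-2(s-r)}$ and using that the eigenvalues are arranged in increasing order so that $\bar{\lambda}_k \geq \bar{\lambda}_n \geq 1$ for every $k \geq n$, we have $\bar{\lambda}_k^{-2(s-r)} \leq \bar{\lambda}_n^{-2(s-r)}$ since $s>r$. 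Pulling this uniform factor out of the sum and bounding the remainder by the full tail yields
$$\sum_{k=n+1}^{\infty} \bar{\lambda}_k^{2r} \inner{f}{\bar{a}_k}^2 \leq \frac{1}{\bar{\lambda}_n^{2(s-r)}}\sum_{k=n+1}^{\infty} \bar{\lambda}_k^{2s}\inner{f}{\bar{a}_k}^2 \leq \frac{1}{\bar{\lambda}_n^{2(s-r)}}\norm{f}_{A^s}^2,$$
which is the claim.

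There is no real obstacle here: both parts follow immediately once we commit to the spectral expansion, and the only mildly delicate point is observing that the condition $\bar{\lambda}_n \geq 1$ together with the monotonicity of $(\bar{\lambda}_k)$ guarantees the uniform bound $\bar{\lambda}_k^{-2(s-r)} \leq \bar{\lambda}_n^{-2(s-r)}$ for all $k \geq n$. The self-adjointness computation is essentially a bookkeeping exercise and requires no hypothesis on the size of $\bar{\lambda}_n$.
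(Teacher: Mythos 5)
Your proof is correct and follows essentially the same route as the paper: a direct spectral expansion for both the self-adjointness and the tail estimate, with the identical factorisation $\bar{\lambda}_k^{2r} = \bar{\lambda}_k^{2s}\bar{\lambda}_k^{-2(s-r)}$ combined with monotonicity of the eigenvalues. The only cosmetic difference is that you route the self-adjointness through Lemma \ref{lemma for passing stokes powers}, item \ref{second item}, with $p=2s$, $q=0$ (whose stated hypothesis nominally asks for $g \in D(A^{2s})$ rather than just $D(A^s)$), whereas the paper expands $\inner{A^s\bar{\mathcal{P}}_nf}{A^sg}$ directly and uses orthonormality of $(\bar{a}_k)$ --- both computations land on the same manifestly symmetric sum, so nothing is lost.
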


\begin{proof}
    For the self-adjoint property, simply note that for $f,g \in D(A^s)$, $$\inner{\bar{\mathcal{P}}_nf}{g}_{A^s} = \inner{\sum_{k=1}^n\bar{\lambda}_k^s\inner{f}{\bar{a}_k}\bar{a}_k}{\sum_{j=1}^\infty\bar{\lambda}_k^s\inner{g}{\bar{a}_k}\bar{a}_k} = \sum_{k=1}^n\bar{\lambda}_k^{2s}\inner{f}{\bar{a}_k}\inner{g}{\bar{a}_k} = \inner{f}{\bar{\mathcal{P}}_ng}_{A^s}.$$
    Onto the approximation by $\bar{\mathcal{P}}_nf$ for $f \in D(A^s)$, we have that
    \begin{align*}
        \norm{(I - \bar{\mathcal{P}}_n)f}_{A^{r}}^2 &= \sum_{k=n+1}^\infty\bar{\lambda}_k^{2r}\inner{f}{\bar{a}_k}^2 = \frac{1}{\bar{\lambda}_n^{2(s-r)}}\sum_{k=n+1}^\infty\bar{\lambda}_k^{2r}\bar{\lambda}_n^{2(s-r)}\inner{f}{\bar{a}_k}^2\\ &\leq \frac{1}{\bar{\lambda}_n^{2(s-r)}}\sum_{k=n+1}^\infty\bar{\lambda}_k^{2s}\inner{f}{\bar{a}_k}^2 = \frac{1}{\bar{\lambda}_n^{2(s-r)}}\norm{(I - \bar{\mathcal{P}}_n)f}_{A^{s}}^2 \leq \frac{1}{\bar{\lambda}_n^{2(s-r)}}\norm{f}_{A^s}^2
    \end{align*}
    having used that $1 \leq \bar{\lambda}_n \leq \bar{\lambda}_k$ for $n \leq k$.
\end{proof}

\subsection{Noise Estimates} \label{subs noise estimates}

Throughout this subsection we assume that $\kappa \geq 0$ and $\alpha = 2\kappa$ as in the main theorems, and so that $\alpha \geq \kappa$ ensuring all results of Subsection \ref{subs frac} apply. We shall make frequent use of Corollary \ref{vorticity corollary}, where the second condition under these assumptions is that $\textnormal{curl}f = 0$ on $\partial \mathscr{O}$. 

\begin{lemma} \label{lemma for noise in right space}
    For $m \in \N$ even, let  $\xi_i \in L^2_{\sigma} \cap W^{m,2}_0 \cap W^{m+2,\infty}$ and $f \in  D(A^{\frac{m+2}{2}})$. Then both $\mathcal{P}B_i^2f$ and $ \mathcal{P}B_if$ belong to $D(A^{\frac{m}{2}})$. 
\end{lemma}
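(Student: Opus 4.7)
My plan is to characterise membership in $D(A^{m/2})$ iteratively. With $k := m/2$ an integer, applying Lemma \ref{lemma for passing stokes powers}(\ref{third item}) inductively shows that $\phi \in L^2_\sigma$ lies in $D(A^k)$ if and only if $A^j \phi \in D(A) = \bar{W}^{2,2}_\alpha$ for every $j = 0, 1, \ldots, k-1$. I verify this for $\phi = \mathcal{P}B_i f$ and $\phi = \mathcal{P}B_i^2 f$ in parallel. The identities $A = -\mathcal{P}\Delta$ together with $\mathcal{P}\Delta\mathcal{P} = \mathcal{P}\Delta$ from Subsection \ref{functional framework subsection} allow an induction on $j$ to yield
\[
    A^j \mathcal{P}B_i f = (-1)^j \mathcal{P}\Delta^j B_i f, \qquad A^j \mathcal{P}B_i^2 f = (-1)^j \mathcal{P}\Delta^j B_i^2 f,
\]
provided the ambient Sobolev regularity is in hand.

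The regularity check follows from \eqref{boundsonB_i} applied once or twice together with $f \in D(A^{(m+2)/2}) \subseteq W^{m+2,2}$ (Proposition \ref{prop all norm equivalences DA}), yielding $B_i f \in W^{m+1,2}$ and $B_i^2 f \in W^{m,2}$. Since each application of $\Delta$ loses two derivatives and $\mathcal{P}$ is bounded on every $W^{\ell,2}$, for $j \leq k - 1$ both $A^j \mathcal{P}B_i f$ and $A^j \mathcal{P}B_i^2 f$ belong to $W^{2,2} \cap L^2_\sigma$, which already supplies the divergence-free and zero-normal-boundary conditions required of $\bar{W}^{1,2}_\sigma$.

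The crux is the Navier boundary condition at every level. Because $\alpha = 2\kappa$, Corollary \ref{vorticity corollary} reduces $\bar{W}^{2,2}_\alpha$-membership to the vanishing of $\textnormal{curl}(A^j \mathcal{P}B_i f)$ and $\textnormal{curl}(A^j \mathcal{P}B_i^2 f)$ on $\partial \mathscr{O}$ in the trace sense. Combining Lemma \ref{lemma for curl and P}, the commutation of curl with $\Delta$, and \eqref{curl of Bi} applied once or twice, these two conditions reduce to
\[
    \Delta^j \mathcal{L}_{\xi_i}(\textnormal{curl}\, f) = 0 \qquad \textnormal{and} \qquad \Delta^j \mathcal{L}_{\xi_i}^2(\textnormal{curl}\, f) = 0
\]
on $\partial \mathscr{O}$, for $j = 0, 1, \ldots, k-1$. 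A Leibniz expansion of the first places every term with a factor $D^\alpha \xi_i$ of order $|\alpha| \leq 2j \leq m - 2$; for the second, every term carries two $\xi_i$-derivative factors whose combined order is at most $2j + 1 \leq m - 1$, and a pigeonhole argument produces at least one factor of order $\leq j \leq m/2 - 1$. Since $\xi_i \in W^{m,2}_0$ forces $D^\alpha \xi_i$ to have vanishing trace whenever $|\alpha| \leq m - 1$, every such term is zero on $\partial \mathscr{O}$, and the full expressions vanish on the boundary.

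The main obstacle I anticipate is the bookkeeping in the $\mathcal{L}_{\xi_i}^2$ Leibniz expansion: rigorously distributing the $2j$ derivatives of $\Delta^j$ across three factors (two copies of $\xi_i$ and one of $\textnormal{curl}\, f$) and confirming by pigeonhole that one $\xi_i$ factor always falls within the vanishing range prescribed by $W^{m,2}_0$. Peripheral points, such as justifying $\textnormal{curl} \circ \Delta = \Delta \circ \textnormal{curl}$ on our Sobolev scale and identifying iterated $A^j$ with iterated projected Laplacians on the candidate spaces, are routine once the regularity of the second paragraph is secured.
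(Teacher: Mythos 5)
Your proposal is correct and follows essentially the same route as the paper: an induction on powers of $A$ via the identity $A^j\mathcal{P} = (-1)^j\mathcal{P}\Delta^j$, reduction of membership in $\bar{W}^{2,2}_{2\kappa}$ to the vanishing of the curl on $\partial\mathscr{O}$ through Corollary \ref{vorticity corollary}, and the identities $\textnormal{curl}(\mathcal{P}\cdot) = \textnormal{curl}(\cdot)$ and $\textnormal{curl}(B_if) = \mathcal{L}_{\xi_i}(\textnormal{curl}f)$. The only cosmetic difference is in the final trace-vanishing step: the paper approximates $\xi_i$ by compactly supported functions so that the whole expression is approximated in $W^{1,2}$ by compactly supported functions, whereas you expand by Leibniz and use that $D^\alpha\xi_i$ has zero trace for $\abs{\alpha}\leq m-1$; both rest on the same $W^{m,2}_0$ hypothesis.
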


\begin{proof}
    Let $m = 2k$, $k \in \N$, and $f \in D(A^{k+1})$. We initially show that each term belongs to $D(A)$, and build up to $D(A^k)$. Our argument shall be for $\mathcal{P}B_i^2f$, with $ \mathcal{P}B_if$ following similarly. As $D(A) = \bar{W}^{2,2}_{2\kappa}$, then from Corollary \ref{vorticity corollary} it is sufficient to show that $\mathcal{P}B_i^2f \in W^{2,2} \cap \bar{W}^{1,2}_{\sigma}$ and that $\textnormal{curl}\left(\mathcal{P}B_i^2f \right) = 0$ on $\partial \mathscr{O}$. We first note that as $f \in D(A^{k+1})$ then $f \in W^{2(k+1),2}$ from Lemma \ref{lemma for inclusion of high A}, so at least $f \in W^{4,2}$. Thus $B_i^2f \in W^{2,2}$ and $\mathcal{P}B_i^2f \in W^{2,2} \cap \bar{W}^{1,2}_{\sigma}$. It remains to show that $\textnormal{curl}\left(\mathcal{P}B_i^2f \right) = 0$ on $\partial \mathscr{O}$, for which we refer to Lemma \ref{lemma for curl and P} and (\ref{curl of Bi}) to see that $\textnormal{curl}\left(\mathcal{P}B_i^2f \right) = \mathcal{L}_{\xi_i}^2(\textnormal{curl}f)$. As $\xi_i \in W^{2,2}_0$ then there exists a sequence of compactly supported functions $(\phi^n)$ approximating $\xi_i$ in $W^{2,2}$, which is sufficient to show that the compactly supported $\mathcal{L}_{\phi^n}\mathcal{L}_{\xi_i}(\textnormal{curl}f)$ approximates $\mathcal{L}_{\xi_i}^2(\textnormal{curl}f)$ in $W^{1,2}(\mathscr{O};\R)$ and hence $\mathcal{L}_{\xi_i}^2(\textnormal{curl}f)$ is of null trace. Therefore, $\mathcal{P}B_i^2f \in D(A)$.\\

With the base case established, let us now make the inductive hypothesis that for $1 \geq j <k$, $\mathcal{P}B_i^2f \in D(A^j)$ implies that $\mathcal{P}B_i^2f \in D(A^{j+1})$. If the hypothesis is true then the result is proven with $j=k-1$. Let us, therefore, assume that $\mathcal{P}B_i^2f \in D(A^j)$. By Lemma \ref{lemma for passing stokes powers}, we would prove that $\mathcal{P}B_i^2f \in D(A^{j+1})$ if we verify that $A^{j}\mathcal{P}B_i^2f \in D(A)$. Iterating the property that $A = A\mathcal{P}$ from the right side inwards, then $A^j\mathcal{P} = (-1)^j\mathcal{P}\Delta^j$ so $A^{j}\mathcal{P}B_i^2f = (-1)^j\mathcal{P}\Delta^jB_i^2f$. Showing that this belongs to $\bar{W}^{2,2}_{2\kappa}$ follows similarly to the base case, where we have the assumptions that $\xi_i \in W^{2k,2}_0 \cap W^{2k+2,\infty}$, and noting that the curl passes through the Laplacians.
\end{proof}

\begin{remark}
    You may observe that in fact, one only requires $f \in  D(A^{\frac{m+1}{2}})$ for $\mathcal{P}B_if \in D(A^{\frac{m}{2}})$. In practice $f$ will have much better regularity anyway, so we choose to maintain the flow of our work by not separating these cases. 
\end{remark}

\begin{corollary} \label{odd corollary}
    For $m \in \N$ odd, let  $\xi_i \in L^2_{\sigma} \cap W^{m+1,2}_0 \cap W^{m+3,\infty}$ and $f \in  D(A^{\frac{m+3}{2}})$. Then both $\mathcal{P}B_i^2f$ and $ \mathcal{P}B_if$ belong to $D(A^{\frac{m}{2}})$.
\end{corollary}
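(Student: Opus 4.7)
The plan is to reduce the odd-index case to the already-established even case by observing that the hypotheses have been calibrated so that replacing $m$ with $m+1$ (which is now even) matches Lemma \ref{lemma for noise in right space} verbatim. Specifically, with $m' := m+1$ even, the conditions $\xi_i \in L^2_{\sigma} \cap W^{m+1,2}_0 \cap W^{m+3,\infty}$ become exactly $\xi_i \in L^2_{\sigma} \cap W^{m',2}_0 \cap W^{m'+2,\infty}$, and the regularity $f \in D(A^{\frac{m+3}{2}}) = D(A^{\frac{m'+2}{2}})$ also matches. Applying Lemma \ref{lemma for noise in right space} then yields that $\mathcal{P}B_i^2 f$ and $\mathcal{P}B_i f$ belong to $D(A^{\frac{m'}{2}}) = D(A^{\frac{m+1}{2}})$.

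To finish, I would invoke item \ref{first item} of Lemma \ref{lemma for passing stokes powers}, which gives the nesting $D(A^{s}) \subseteq D(A^{r})$ for $0 \leq r < s$. With $s = \frac{m+1}{2}$ and $r = \frac{m}{2}$, this yields the desired inclusion in $D(A^{\frac{m}{2}})$.

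There is no real obstacle here: the entire content is a matter of bookkeeping, checking that the function-space indices in the hypotheses of the corollary are precisely those needed to invoke the even case of the lemma, and then appealing to the monotonicity of the domains of fractional Stokes powers. No new estimate, boundary argument, or commutator computation is required beyond what is already inside Lemma \ref{lemma for noise in right space}.
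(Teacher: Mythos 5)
Your proposal is correct and is essentially identical to the paper's own proof: the paper likewise applies Lemma \ref{lemma for noise in right space} with the even index $m+1$ and then uses the inclusion $D(A^{\frac{m+1}{2}}) \subseteq D(A^{\frac{m}{2}})$ from Lemma \ref{lemma for passing stokes powers}. The index bookkeeping you carry out is exactly the intended content of the corollary.
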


\begin{proof}
    We apply Lemma \ref{lemma for noise in right space} for the even $m +1$, and simply use that $D(A^{\frac{m+1}{2}}) \subseteq D(A^{\frac{m}{2}})$.
\end{proof}

Having established that the noise belongs to the suitable spaces, we can prove estimates in them.

\begin{proposition} \label{prop high order estimates 1}
We have the following high order estimates:
\begin{enumerate}
    \item \label{itemous one} Let $m \in \N$ be even. For any $\varepsilon > 0$, there exists a constant $c_{\varepsilon}$ such that for all $\xi_i \in L^2_{\sigma} \cap W^{m,2}_0 \cap W^{m+2,\infty}$ and $f \in D(A^{\frac{m+2}{2}})$, $$\inner{\mathcal{P}B_i^2f}{f}_{A^\frac{m}{2}} + \norm{\mathcal{P}B_if}_{A^\frac{m}{2}}^2 \leq c_{\varepsilon}\norm{\xi_i}_{W^{m+1,\infty}}^2\norm{f}_{A^{\frac{m}{2}}}^2 + \varepsilon\norm{\xi_i}_{W^{m+1,\infty}}^2\norm{f}_{A^{\frac{m+1}{2}}}^2;$$
    \item \label{itemous two} Let $m \in \N$ be odd. For any $\varepsilon > 0$, there exists a constant $c_{\varepsilon}$ such that for all $\xi_i \in L^2_{\sigma} \cap W^{m+1,2}_0 \cap W^{m+3,\infty}$ and $f \in  D(A^{\frac{m+3}{2}})$, $$\inner{\mathcal{P}B_i^2f}{f}_{A^\frac{m}{2}} + \norm{\mathcal{P}B_if}_{A^\frac{m}{2}}^2 \leq c_{\varepsilon}\norm{\xi_i}_{W^{m+2,\infty}}^2\norm{f}_{A^{\frac{m}{2}}}^2 + \varepsilon\norm{\xi_i}_{W^{m+2,\infty}}^2\norm{f}_{A^{\frac{m+1}{2}}}^2.$$
\end{enumerate}
\end{proposition}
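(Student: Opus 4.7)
The proof proceeds by reducing both quantities, via a commutator identity, to the base $L^2$-level bound (\ref{combinedterminenergyinequality}). Consider first the even case $m = 2k$. Starting from $A^j\mathcal{P} = (-1)^j \mathcal{P}\Delta^j$ on sufficiently regular elements of $D(A^j)$ (an iteration of $\mathcal{P}\Delta\mathcal{P} = \mathcal{P}\Delta$, which comes from the invariance of the complement of $L^2_\sigma$ under $\Delta$) together with $\mathcal{P}B_i = \mathcal{P}B_i\mathcal{P}$, I will derive the identities
\begin{align*}
A^k \mathcal{P}B_i^2 f &= \mathcal{P}B_i^2 A^k f + (-1)^k \mathcal{P}[\Delta^k, B_i^2]f,\\
A^k \mathcal{P}B_i f &= \mathcal{P}B_i A^k f + (-1)^k \mathcal{P}[\Delta^k, B_i]f,
\end{align*}
where each side is well-defined by Lemma \ref{lemma for noise in right space}. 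Setting $g := A^k f$ and expanding gives
$$\inner{\mathcal{P}B_i^2 f}{f}_{A^k} + \norm{\mathcal{P}B_i f}_{A^k}^2 = \inner{B_i^2 g}{g} + \norm{\mathcal{P}B_i g}^2 + R,$$
where $R$ collects the remaining cross terms that contain at least one factor of $\mathcal{P}[\Delta^k,\cdot]f$.

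The leading pair $\inner{B_i^2 g}{g} + \norm{\mathcal{P}B_i g}^2$ is controlled by applying (\ref{combinedterminenergyinequality}) with $k=0$ together with $\norm{\mathcal{P}B_i g} \leq \norm{B_i g}$, producing $c\norm{\xi_i}_{W^{2,\infty}}^2 \norm{g}^2 = c\norm{\xi_i}_{W^{2,\infty}}^2 \norm{f}_{A^{m/2}}^2$. For the remainder $R$, I iterate $[\Delta^k, B_i] = \sum_{j=0}^{k-1}\Delta^j[\Delta, B_i]\Delta^{k-1-j}$ and expand $[\Delta^k, B_i^2]$ via the Leibniz-type identity $B_i[\Delta^k, B_i] + [\Delta^k, B_i]B_i$. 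Combining (\ref{commutator bound higher order}) and (\ref{boundsonB_i}) yields
$$\norm{[\Delta^k, B_i]f} \leq c\norm{\xi_i}_{W^{m+1,\infty}}\norm{f}_{W^{m,2}}, \qquad \norm{[\Delta^k, B_i^2]f} \leq c\norm{\xi_i}_{W^{m+1,\infty}}^2\norm{f}_{W^{m+1,2}},$$
so each piece of $R$ is bounded by $c\norm{\xi_i}^2\norm{f}_{W^{m+1,2}}\norm{f}_{W^{m,2}}$. Converting via the norm equivalence of Proposition \ref{prop all norm equivalences DA} and applying Young's inequality with parameter $\varepsilon$ splits this as $c_\varepsilon \norm{f}_{A^{m/2}}^2 + \varepsilon \norm{f}_{A^{(m+1)/2}}^2$, as required.

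The odd case $m = 2k+1$ is treated by the same blueprint after factoring $A^{m/2} = A^{1/2}A^k$ and inserting the even-case commutator identity on the $A^k$ layer. The residual $A^{1/2}$-type inner product that survives is handled via Proposition \ref{prop for W12 norm equivalence Ahalf} together with integration by parts through Lemma \ref{greens for navier}; the extra derivative demanded of both $\xi_i$ and $f$ in Corollary \ref{odd corollary} is exactly what allows this factorisation while keeping $\mathcal{P}B_i^2 f$ and $\mathcal{P}B_i f$ in $D(A^{m/2})$.

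The main obstacle is sharpness: one must verify that $[\Delta^k, B_i]$ is genuinely a differential operator of order $2k$ rather than the naive $2k+1$. This \emph{half-cancellation} of the top derivative is what ensures that the remainder contributes only $\norm{f}_{A^{(m+1)/2}}$ with arbitrarily small prefactor, so that the viscous term in subsequent energy estimates can absorb it. This sharpness ultimately reflects the gradient-preserving structure of $B_i$ that underpins both the identity $\mathcal{P}B_i = \mathcal{P}B_i\mathcal{P}$ and the order reduction inside the commutator estimates.
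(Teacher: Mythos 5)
Your even case is correct but routed differently from the paper. You commute $\Delta^k$ past \emph{both} factors of $B_i$, reducing the leading contribution to the $L^2$-level quantity $\inner{B_i^2g}{g}+\norm{\mathcal{P}B_ig}^2$ with $g=A^kf$, and then invoke (\ref{combinedterminenergyinequality}) at order zero. The paper instead commutes $\Delta^k$ past only \emph{one} $B_i$, leaving $\mathcal{P}B_iA^kB_if$, and obtains the cancellation by pairing this against the quadratic-variation term to form $\inner{A^kB_if}{(B_i^*+B_i)A^kf}$, where the transport parts cancel and only the bounded $\mathcal{T}_{\xi_i}^*+\mathcal{T}_{\xi_i}$ survives. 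The two are equivalent in substance (your route delegates the adjoint cancellation to the already-proven base inequality, which is arguably cleaner), and your remainder analysis via $[\Delta^k,B_i]=\sum_j\Delta^j[\Delta,B_i]\Delta^{k-1-j}$ and (\ref{commutator bound higher order}) matches the paper's. One quantitative caveat: estimating $[\Delta^k,B_i^2]$ through the factor $B_i[\Delta^k,B_i]f$ forces you to measure $[\Delta^k,B_i]f$ in $W^{1,2}$, which by (\ref{commutator bound higher order}) costs $\norm{\xi_i}_{W^{m+2,\infty}}$ rather than the $\norm{\xi_i}_{W^{m+1,\infty}}$ appearing in the stated bound; the hypothesis $\xi_i\in W^{m+2,\infty}$ covers this, but your conclusion is then slightly weaker than stated (the paper avoids this by never forming $[\Delta^k,B_i^2]$, keeping the inner $B_i$ applied to $f$ first).

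The odd case as written has a genuine gap. Factoring $A^{m/2}=A^{1/2}A^k$ and inserting the even-case identity on the $A^k$ layer leaves you with the leading term $\inner{\mathcal{P}B_i^2g}{g}_{A^{1/2}}+\norm{\mathcal{P}B_ig}_{A^{1/2}}^2$, which is precisely the $m=1$ instance of the proposition you are trying to prove --- not a quantity that (\ref{combinedterminenergyinequality}) or a one-line integration by parts disposes of. Unwinding it via $\inner{h}{Ag}=-\inner{\mathcal{P}h}{\Delta g}$ and Lemma \ref{greens for navier} produces boundary integrals weighted by $\kappa-\alpha=-\kappa$ that do not obviously vanish, and the Leray projector obstructs replacing $\inner{\cdot}{\cdot}_{A^{1/2}}$ by $\inner{\cdot}{\cdot}_1$ on the noise terms; this is exactly the delicate estimate that occupied the $W^{1,2}$ theory in the author's earlier work. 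The paper sidesteps the half power entirely: it writes $\inner{A^{(2k-1)/2}\mathcal{P}B_i^2f}{A^{(2k-1)/2}f}=\inner{A^{k-1}\mathcal{P}B_i^2f}{A^kf}$ using Lemma \ref{lemma for passing stokes powers}, distributes $k-1$ commutations on one slot and $k$ on the other, and again lands on $\inner{A^{k-1}B_if}{(B_i^*+B_i)A^kf}$ plus integer-order commutator remainders. You should either adopt that asymmetric splitting or supply the $A^{1/2}$-level cancellation in full; as it stands the hardest part of item \ref{itemous two} is asserted rather than proven.
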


\begin{proof}
Beginning with item \ref{itemous one}, let $m = 2k$, $k \in \N$, and $f \in D(A^{k+1})$. We must control \begin{equation} \label{theonetwo}
    \inner{A^k\mathcal{P}B_i^2f}{A^kf} + \inner{A^k\mathcal{P}B_if}{A^k\mathcal{P}B_if}.
\end{equation}
Our idea is to use the representation $A^k\mathcal{P} = (-1)^k\mathcal{P}\Delta^k$ seen in the proof of Lemma \ref{lemma for noise in right space}, commute $B_i$ with $\Delta$ through (\ref{commutator bound higher order}), and then use the cancellation arising from $B_i^*$. Considering the first term, we write \begin{align*}A^k\mathcal{P}B_i^2f &= (-1)^k\mathcal{P}\Delta^kB_i^2f\\ &= (-1)^k\mathcal{P}\Delta^{k-1}B_i\Delta B_if + (-1)^k\mathcal{P}\Delta^{k-1}[\Delta,B_i]B_if\\
&= (-1)^k\mathcal{P}\Delta^{k-2}B_i\Delta^2 B_if + (-1)^k\mathcal{P}\Delta^{k-2}[\Delta,B_i]\Delta B_if +(-1)^k\mathcal{P}\Delta^{k-1}[\Delta,B_i]B_if\\
&= (-1)^k\mathcal{P}B_i\Delta^{k} B_if + (-1)^k\mathcal{P}\sum_{j=1}^k\Delta^{k-j}[\Delta,B_i]\Delta^{j-1}B_if\\
&= \mathcal{P}B_iA^{k} B_if + (-1)^k\mathcal{P}\sum_{j=1}^k\Delta^{k-j}[\Delta,B_i]\Delta^{j-1}B_if
\end{align*}
where in the last line we have used that $\mathcal{P}B_i = \mathcal{P}B_i\mathcal{P}$, linearity of $\mathcal{P}$ and $B_i$, and that $(-1)^k\mathcal{P}\Delta^k = A^k$. Identically, pertaining to the second term of (\ref{theonetwo}),
\begin{equation} \label{le pertinence} A^k\mathcal{P}B_if =  \mathcal{P}B_iA^{k}f + (-1)^k\mathcal{P}\sum_{j=1}^k\Delta^{k-j}[\Delta,B_i]\Delta^{j-1}f.\end{equation}
Therefore, we can rewrite (\ref{theonetwo}) as
\begin{align*}
    (\ref{theonetwo}) & = \inner{\mathcal{P}B_iA^{k} B_if + (-1)^k\mathcal{P}\sum_{j=1}^k\Delta^{k-j}[\Delta,B_i]\Delta^{j-1}B_if}{A^kf}\\ & \qquad \qquad \qquad + \inner{A^k\mathcal{P}B_if}{\mathcal{P}B_iA^{k}f + (-1)^k\mathcal{P}\sum_{j=1}^k\Delta^{k-j}[\Delta,B_i]\Delta^{j-1}f}\\
    &= \inner{B_iA^{k} B_if}{A^kf} + \inner{(-1)^k\mathcal{P}\sum_{j=1}^k\Delta^{k-j}[\Delta,B_i]\Delta^{j-1}B_if}{A^kf}\\ & \qquad \qquad \qquad
    + \inner{A^kB_if}{B_iA^{k}f}  + \inner{A^kB_if}{(-1)^k\mathcal{P}\sum_{j=1}^k\Delta^{k-j}[\Delta,B_i]\Delta^{j-1}f}\\
    & = \inner{A^{k} B_if}{(B_i^* + B_i)A^kf} + \inner{(-1)^k\mathcal{P}\sum_{j=1}^k\Delta^{k-j}[\Delta,B_i]\Delta^{j-1}B_if}{A^kf}\\ & \qquad \qquad \qquad
    + \inner{A^kB_if}{(-1)^k\mathcal{P}\sum_{j=1}^k\Delta^{k-j}[\Delta,B_i]\Delta^{j-1}f}
\end{align*}
having used that $\mathcal{P}$ is self-adjoint and that $\mathcal{P}A^k = A^k = A^k\mathcal{P}$. The remainder of the proof is simply a combination of Cauchy-Schwarz and Young's Inequality, having achieved cancellation of a derivative in all terms. We note that the proof of $\norm{A^kg} \leq c\norm{g}_{W^{2k,2}}$ for $g \in D(A^k)$ of Proposition \ref{prop all norm equivalences DA} is immediate also for general $g \in W^{2k,2}$. In the first term,
\begin{align}
    \nonumber \inner{A^{k} B_if}{(B_i^* + B_i)A^kf} &= \inner{A^{k} B_if}{(\mathcal{T}_{\xi_i}^* + \mathcal{T}_{\xi_i})A^kf}\\ \nonumber
    &\leq \norm{A^{k} B_if}\norm{(\mathcal{T}_{\xi_i}^* + \mathcal{T}_{\xi_i})A^kf}\\ \nonumber
    &\leq c\norm{B_if}_{W^{2k,2}}\norm{\xi_i}_{W^{1,\infty}}\norm{A^kf}\\  \nonumber
    &\leq c\norm{\xi_i}_{W^{2k+1,\infty}}\norm{f}_{W^{2k+1,2}}\norm{\xi_i}_{W^{1,\infty}}\norm{A^kf}\\
    &\leq c_{\varepsilon}\norm{\xi_i}_{W^{2k+1,\infty}}^2\norm{f}_{A^k}^2 + \varepsilon\norm{\xi_i}_{W^{2k+1,\infty}}^2\norm{f}_{A^{\frac{2k+1}{2}}}^2. \label{gfhj}
\end{align}
Approaching the next term, using (\ref{commutator bound higher order}), 
\begin{align}
     \nonumber \norm{(-1)^k\mathcal{P}\sum_{j=1}^k\Delta^{k-j}[\Delta,B_i]\Delta^{j-1}B_if} &\leq c\sum_{j=1}^k\norm{\Delta^{k-j}[\Delta,B_i]\Delta^{j-1}B_if}\\  \nonumber
    &\leq c\sum_{j=1}^k\norm{[\Delta,B_i]\Delta^{j-1}B_if}_{W^{2(k-j)}}\\  \nonumber
    &\leq c\sum_{j=1}^k\norm{\xi_i}_{W^{2(k-j)+3,\infty}}\norm{\Delta^{j-1}B_if}_{W^{2(k-j)+2,2}}\\  \nonumber
     &\leq c\sum_{j=1}^k\norm{\xi_i}_{W^{2(k-j)+3,\infty}}\norm{B_if}_{W^{2k,2}}\\  \nonumber
     &\leq c\sum_{j=1}^k\norm{\xi_i}_{W^{2(k-j)+3,\infty}}\norm{\xi_i}_{W^{2k+1,\infty}}\norm{f}_{W^{2k+1,2}}\\ \label{exammarks}
     &\leq c\norm{\xi_i}_{W^{2k+1,\infty}}^2\norm{f}_{W^{2k+1,2}}.
\end{align}
Therefore
\begin{align*}\inner{(-1)^k\mathcal{P}\sum_{j=1}^k\Delta^{k-j}[\Delta,B_i]\Delta^{j-1}B_if}{A^kf} &\leq c\norm{\xi_i}_{W^{2k+1,\infty}}^2\norm{f}_{W^{2k+1,2}}\norm{A^kf}\\
&\leq c_{\varepsilon}\norm{\xi_i}_{W^{2k+1,\infty}}^2\norm{f}_{A^k}^2 + \varepsilon\norm{\xi_i}_{W^{2k+1,\infty}}^2\norm{f}_{A^{\frac{2k+1}{2}}}^2
\end{align*}
where the same arguments generate the same control on the final term, concluding the proof of item \ref{itemous one}. Item \ref{itemous two} is unsurprisingly alike, so letting $m=2k-1$ and $f \in D(A^{k+1})$, we make the initial observation with Lemma \ref{lemma for passing stokes powers} that
$$\inner{A^{\frac{2k-1}{2}}\mathcal{P}B_i^2f}{A^{\frac{2k-1}{2}}f} + \inner{A^{\frac{2k-1}{2}}\mathcal{P}B_if}{A^{\frac{2k-1}{2}}\mathcal{P}B_if} = \inner{A^{k-1}\mathcal{P}B_i^2f}{A^{k}f} + \inner{A^{k-1}\mathcal{P}B_if}{A^{k}\mathcal{P}B_if}.$$
With the same approach that was used for (\ref{theonetwo}), we represent this as
\begin{align}
   \nonumber \inner{A^{k-1}\mathcal{P}B_i^2f}{A^{k}f} &+ \inner{A^{k-1}\mathcal{P}B_if}{A^{k}\mathcal{P}B_if}\\ \nonumber &= \inner{A^{k-1} B_if}{(B_i^* + B_i)A^kf} + \inner{(-1)^{k-1}\mathcal{P}\sum_{j=1}^{k-1}\Delta^{k-1-j}[\Delta,B_i]\Delta^{j-1}B_if}{A^kf}\\ & \qquad \qquad \qquad
    + \inner{A^{k-1}B_if}{(-1)^k\mathcal{P}\sum_{j=1}^k\Delta^{k-j}[\Delta,B_i]\Delta^{j-1}f}. \label{nostarforyou}
\end{align}
Similarly to (\ref{gfhj}),
\begin{align*}
    \inner{A^{k-1} B_if}{(B_i^* + B_i)A^kf} &\leq c\norm{B_if}_{W^{2(k-1),2}}\norm{(\mathcal{T}_{\xi_i}^* + \mathcal{T}_{\xi_i})A^kf}\\ &\leq c\norm{\xi_i}_{W^{2k-1,\infty}}^2\norm{f}_{W^{2k-1,2}}\norm{f}_{W^{2k,2}}\\
    &\leq c_{\varepsilon}\norm{\xi_i}_{W^{2k-1,\infty}}^2\norm{f}_{A^{\frac{2k-1}{2}}}^2 + \varepsilon\norm{\xi_i}_{W^{2k-1,\infty}}^2\norm{f}_{A^{k}}^2
\end{align*}
and comparing to (\ref{exammarks}),
\begin{align} \nonumber
    \norm{(-1)^{k-1}\mathcal{P}\sum_{j=1}^{k-1}\Delta^{k-1-j}[\Delta,B_i]\Delta^{j-1}B_if} \leq c\norm{\xi_i}_{W^{2k-1,\infty}}^2\norm{f}_{W^{2k-1,2}}
\end{align}
as well as 
\begin{align} \label{just fits}
    \norm{(-1)^k\mathcal{P}\sum_{j=1}^k\Delta^{k-j}[\Delta,B_i]\Delta^{j-1}f} \leq c\sum_{j=1}^k\norm{\xi_i}_{W^{2(k-j)+3,\infty}}\norm{f}_{W^{2k,2}} \leq c\norm{\xi_i}_{W^{2k+1,\infty}}\norm{f}_{W^{2k,2}}.
\end{align}
It is now straightforwards to slot these bounds into (\ref{nostarforyou}) with Cauchy-Scwharz and Young's Inequality, concluding the proof in the same manner as item \ref{itemous one}.
    
\end{proof}

Practically for energy estimates of (\ref{projected Ito Salt}), Proposition \ref{prop high order estimates 1} is used for the sum between the It\^{o}-Stratonovich corrector and the quadratic variation of the stochastic integral. For the stochastic integral itself, upon applying the Burkholder-Davis-Gundy Inequality, we will use Proposition \ref{high order estimates 2}.

\begin{proposition} \label{high order estimates 2}
We have the following high order estimates:
\begin{enumerate}
    \item \label{itemous one prime} Let $m \in \N$ be even. There exists a constant $c$ such that for all $\xi_i \in L^2_{\sigma} \cap W^{m,2}_0 \cap W^{m+2,\infty}$ and $f \in D(A^{\frac{m+2}{2}})$, $$\inner{\mathcal{P}B_if}{f}_{A^\frac{m}{2}}^2 \leq c\norm{\xi_i}_{W^{m+1,\infty}}^2\norm{f}_{A^{\frac{m}{2}}}^4;$$
    \item \label{itemous two prime} Let $m \in \N$ be odd. There exists a constant $c$ such that for all $\xi_i \in L^2_{\sigma} \cap W^{m+1,2}_0 \cap W^{m+3,\infty}$ and $f \in  D(A^{\frac{m+3}{2}})$, $$\inner{\mathcal{P}B_if}{f}_{A^\frac{m}{2}}^2 \leq c\norm{\xi_i}_{W^{m+1,\infty}}^2\norm{f}_{A^{\frac{m}{2}}}^4.$$
\end{enumerate}
\end{proposition}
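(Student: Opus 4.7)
The plan is to mimic the proof of Proposition \ref{prop high order estimates 1} at the level of the single inner product $\inner{\mathcal{P}B_if}{f}_{A^{m/2}}$, where the $\mathcal{L}_{\xi_i}$-cancellation is now supplied by the identity $\inner{\mathcal{L}_{\xi_i}g}{g} = 0$ from Lemma \ref{navier boundary nonlinear} with a single argument, rather than by the symmetrisation $B_i + B_i^*$ used previously. This is what allows the bound to scale as $\|f\|_{A^{m/2}}^4$ with no $\|f\|_{A^{(m+1)/2}}^2$-concession, and also explains the saving of one derivative of $\xi_i$ relative to Proposition \ref{prop high order estimates 1}.

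For item \ref{itemous one prime} with $m = 2k$, I would begin from $\inner{\mathcal{P}B_if}{f}_{A^k} = \inner{A^k\mathcal{P}B_if}{A^kf}$ and apply the decomposition (\ref{le pertinence}) so that
\[\inner{\mathcal{P}B_if}{f}_{A^k} = \inner{B_iA^kf}{A^kf} + \inner{(-1)^k\mathcal{P}\sum_{j=1}^k\Delta^{k-j}[\Delta,B_i]\Delta^{j-1}f}{A^kf}.\]
In the first term the $\mathcal{L}_{\xi_i}$-part vanishes by Lemma \ref{navier boundary nonlinear} and the residual $\mathcal{T}_{\xi_i}$-part is controlled by $c\|\xi_i\|_{W^{1,\infty}}\|A^kf\|^2$ via (\ref{T_ibound}); the commutator sum is bounded by the estimate (\ref{just fits}) already established in the proof of Proposition \ref{prop high order estimates 1}, yielding $c\|\xi_i\|_{W^{2k+1,\infty}}\|f\|_{W^{2k,2}}\|A^kf\|$. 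Proposition \ref{prop all norm equivalences DA} collapses both into $c\|\xi_i\|_{W^{m+1,\infty}}\|f\|_{A^{m/2}}^2$, and squaring finishes this case.

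For item \ref{itemous two prime} with $m = 2k-1$, the one-sided splitting $\inner{A^{k-1}\mathcal{P}B_if}{A^kf}$ from Lemma \ref{lemma for passing stokes powers}(\ref{second item}) has asymmetric entries, preventing the direct $\mathcal{L}_{\xi_i}$-cancellation. I would therefore symmetrise and write
\[2\inner{\mathcal{P}B_if}{f}_{A^{(2k-1)/2}} = \inner{A^{k-1}\mathcal{P}B_if}{A^kf} + \inner{A^k\mathcal{P}B_if}{A^{k-1}f},\]
expand both sides via (\ref{le pertinence}), and annihilate the cross sum $\inner{\mathcal{L}_{\xi_i}A^{k-1}f}{A^kf} + \inner{\mathcal{L}_{\xi_i}A^kf}{A^{k-1}f} = 0$ through Lemma \ref{navier boundary nonlinear}. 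The two residual $\mathcal{T}_{\xi_i}$-contributions are then treated by substituting $A^kf = -\mathcal{P}\Delta A^{k-1}f$ and applying Green's identity (Lemma \ref{greens for navier}) to transfer a derivative onto $\mathcal{P}\mathcal{T}_{\xi_i}A^{k-1}f$; the resulting interior inner product is bounded by $c\|\xi_i\|_{W^{2,\infty}}\|A^{k-1}f\|_1^2 \leq c\|\xi_i\|_{W^{2,\infty}}\|f\|_{A^{(2k-1)/2}}^2$ via Proposition \ref{prop for W12 norm equivalence Ahalf}, while the boundary contribution is absorbed using the trace inequality (\ref{inequality from Lions}).

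The main obstacle is the remaining commutator sum paired with $A^{k-1}f$: its $j = k$ summand $\mathcal{P}[\Delta,B_i]\Delta^{k-1}f$ carries no outer Laplacian against which to integrate by parts, and the naive Cauchy--Schwarz bound would pay a full $\|f\|_{A^k}$. To address this I would use the recursion
\[(-1)^k\mathcal{P}\sum_{j=1}^k\Delta^{k-j}[\Delta,B_i]\Delta^{j-1}f = A\left((-1)^{k-1}\mathcal{P}\sum_{j=1}^{k-1}\Delta^{k-1-j}[\Delta,B_i]\Delta^{j-1}f\right) + (-1)^k\mathcal{P}[\Delta,B_i]\Delta^{k-1}f,\]
obtained by factoring $A^k\mathcal{P}B_if = A(A^{k-1}\mathcal{P}B_if)$ and exploiting $\mathcal{P}B_i\mathcal{P}^\perp = 0$. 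A key observation is that the inner factor lies in $D(A)$: since $\xi_i \in W^{m+1,2}_0$ forces each $[\Delta,B_i]\Delta^{j-1}f$ to be compactly supported in the interior, Lemma \ref{lemma for curl and P} together with Corollary \ref{vorticity corollary} then supply the Navier boundary condition after Leray projection. The self-adjointness of $A$ on $L^2_\sigma$ converts the first summand's contribution into $\inner{(\cdot)}{A^kf}$, which is already controlled by the Green's argument of the previous paragraph. The residual $\inner{\mathcal{P}[\Delta,B_i]\Delta^{k-1}f}{A^{k-1}f}$ is dispatched by the direct computation $B_i(\nabla\phi) = \nabla(\mathcal{L}_{\xi_i}\phi)$, which implies $\mathcal{P}[\Delta,B_i]\nabla\phi = 0$ and, combined with $\Delta^{k-1}f = (-1)^{k-1}A^{k-1}f$ modulo gradients, reduces it to $\inner{[\Delta,B_i]A^{k-1}f}{A^{k-1}f}$; splitting $[\Delta,B_i]$ into its $\mathcal{L}_{\xi_i}$ and $\mathcal{T}_{\xi_i}$ pieces and integrating by parts once to move the second-order derivatives off, with the boundary contributions controlled via (\ref{inequality from Lions}) and the vanishing traces of the relevant derivatives of $\xi_i$, closes the estimate at $c\|\xi_i\|_{W^{m+1,\infty}}\|f\|_{A^{m/2}}^2$ and completes item \ref{itemous two prime} after squaring.
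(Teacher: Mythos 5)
Your even case is essentially the paper's own argument: decompose via (\ref{le pertinence}), kill the transport part of $\inner{B_iA^kf}{A^kf}$ by antisymmetry (this is (\ref{finalboundinderivativeproof}) at order zero), and bound the commutator sum by (\ref{just fits}); nothing to add there.

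The odd case is where you depart from the paper, and where there is a genuine gap. After symmetrising and expanding both $\inner{A^{k-1}\mathcal{P}B_if}{A^kf}$ and $\inner{A^k\mathcal{P}B_if}{A^{k-1}f}$ via (\ref{le pertinence}), you are left with the term $\inner{C_{k-1}f}{A^kf}$, where $C_{k-1}f := (-1)^{k-1}\mathcal{P}\sum_{j=1}^{k-1}\Delta^{k-1-j}[\Delta,B_i]\Delta^{j-1}f$ --- it appears once directly from the first expansion and once again through your recursion --- and you never actually estimate it. You declare it ``already controlled by the Green's argument of the previous paragraph'', but that argument was set up only for the zeroth-order multiplication operators $\mathcal{T}_{\xi_i}$, $\mathcal{T}_{\xi_i}^*$, not for a sum of second-order commutators. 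Naive Cauchy--Schwarz here costs $\norm{C_{k-1}f}\norm{A^kf}\leq c\norm{f}_{W^{2k-1,2}}\norm{f}_{W^{2k,2}}$, one derivative more than the claimed $\norm{f}_{A^{\frac{m}{2}}}^2$, so the step genuinely needs an idea. The missing idea is exactly the paper's: $C_{k-1}f$, being the difference of $A^{k-1}\mathcal{P}B_if$ and $\mathcal{P}B_iA^{k-1}f$, lies in $D(A)$ and hence in $D(A^{\frac{1}{2}})=\bar{W}^{1,2}_{\sigma}$, so one may write $\inner{C_{k-1}f}{A^kf}=\inner{A^{\frac{1}{2}}C_{k-1}f}{A^{k-\frac{1}{2}}f}$ and bound $\norm{A^{\frac{1}{2}}C_{k-1}f}\leq c\norm{C_{k-1}f}_1\leq c\norm{\xi_i}_{W^{2k,\infty}}\norm{f}_{W^{2k-1,2}}$ using (\ref{commutator bound higher order}) in $W^{1,2}$. (Your substitution $A^kf=-\mathcal{P}\Delta A^{k-1}f$ plus Lemma \ref{greens for navier} would achieve the same thing if you carried it out for this term and controlled the resulting boundary integral via (\ref{inequality from Lions}), but as written you have not.) Note also that the paper reaches the conclusion without your symmetrisation: it keeps $\inner{A^k\mathcal{P}B_if}{A^{k-1}f}$, commutes only $k-1$ powers of $A$ past $\mathcal{P}B_i$, handles the leading term $\inner{\Delta B_iA^{k-1}f}{A^{k-1}f}$ by one integration by parts with null boundary term (using $\xi_i\in W^{m+1,2}_0$) followed by (\ref{finalboundinderivativeproof}) at order one, and applies the half-power splitting above to the commutator remainder. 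That route avoids your Green's-identity treatment of the $\mathcal{T}_{\xi_i}$ terms and your separate analysis of the extra summand $\mathcal{P}[\Delta,B_i]\Delta^{k-1}f$, which in your scheme still rests on the unproven (though correct) identity $\mathcal{P}[\Delta,B_i]\nabla\phi=0$ and a further integration by parts whose boundary terms you only sketch.
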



\begin{proof}
Once more we begin with the even case $m=2k$. We must control $$\inner{A^k \mathcal{P}B_if}{A^kf}^2$$
which we approach similarly to Proposition \ref{prop high order estimates 1} in terms of commutator arguments. Recalling (\ref{le pertinence}), followed by using that $\mathcal{P}$ is self-adjoint and the bounds (\ref{finalboundinderivativeproof}), (\ref{just fits}), we obtain that
\begin{align*}
    \inner{A^k \mathcal{P}B_if}{A^kf}^2 &= \left(\inner{\mathcal{P}B_iA^{k}f}{A^kf} + \inner{(-1)^k\mathcal{P}\sum_{j=1}^k\Delta^{k-j}[\Delta,B_i]\Delta^{j-1}f}{A^kf}\right)^2\\
    &=  \left(\inner{B_iA^{k}f}{A^kf} + \inner{(-1)^k\mathcal{P}\sum_{j=1}^k\Delta^{k-j}[\Delta,B_i]\Delta^{j-1}f}{A^kf}\right)^2\\
    &\leq c\norm{\xi_i}^2_{W^{2k+1,\infty}}\norm{f}_{A^k}^4.
\end{align*}
Moving on to odd $m = 2k-1$, our starting expression is $$\inner{A^\frac{m}{2} \mathcal{P}B_if}{A^{\frac{m}{2}}f}^2 = \inner{A^k \mathcal{P}B_if}{A^{k-1}f}^2.$$
We now look to commute only the first $k-1$ powers of $A$ with $\mathcal{P}B_i$, leading to
\begin{align*}
    \inner{A^k \mathcal{P}B_if}{A^{k-1}f}^2 &= \left(\inner{A \mathcal{P}B_iA^{k-1}f}{A^{k-1}f} + \inner{A(-1)^{k-1}\mathcal{P}\sum_{j=1}^{k-1}\Delta^{k-1-j}[\Delta,B_i]\Delta^{j-1}f}{A^{k-1}f}\right)^2\\
    &\leq 2\inner{\Delta B_iA^{k-1}f}{A^{k-1}f}^2 + 2\inner{A(-1)^{k-1}\mathcal{P}\sum_{j=1}^{k-1}\Delta^{k-1-j}[\Delta,B_i]\Delta^{j-1}f}{A^{k-1}f}^2.
\end{align*}
As the second term appears more familiar we start there, though we must be precise as there are one too many derivatives in the left side of the inner product. We handle this by observing that $A^{k-1}\mathcal{P}B_if \in D(A)$ given that $\mathcal{P}B_if \in D(A^{k})$, and as $\mathcal{P}B_iA^{k-1}f \in D(A)$ then so too is the difference $(-1)^{k-1}\mathcal{P}\sum_{j=1}^{k-1}\Delta^{k-1-j}[\Delta,B_i]\Delta^{j-1}f$. Therefore,
$$ \inner{A(-1)^{k-1}\mathcal{P}\sum_{j=1}^{k-1}\Delta^{k-1-j}[\Delta,B_i]\Delta^{j-1}f}{A^{k-1}f}^2 = \inner{A^{\frac{1}{2}}(-1)^{k-1}\mathcal{P}\sum_{j=1}^{k-1}\Delta^{k-1-j}[\Delta,B_i]\Delta^{j-1}f}{A^{k-\frac{1}{2}}f}^2.$$
The desired bound now comes out of Cauchy-Schwarz,
\begin{align*}
    \norm{A^{\frac{1}{2}}(-1)^{k-1}\mathcal{P}\sum_{j=1}^{k-1}\Delta^{k-1-j}[\Delta,B_i]\Delta^{j-1}f} &\leq c\sum_{j=1}^{k-1}\norm{\Delta^{k-1-j}[\Delta,B_i]\Delta^{j-1}f}_1\\ &\leq c\sum_{j=1}^{k-1}\norm{[\Delta,B_i]\Delta^{j-1}f}_{W^{2(k-j)-1,2}}\\
    &\leq c\norm{\xi_i}_{W^{2k,\infty}}\norm{f}_{W^{2k-1,2}}
\end{align*}
again using (\ref{commutator bound higher order}). To conclude the proof it only remains to control \begin{equation} \label{comptroller} \inner{\Delta B_iA^{k-1}f}{A^{k-1}f}^2.\end{equation}
For this we can use that $\xi_i \in W^{m+1,2}_0$ to carry out an integration by parts with null boundary term, reducing (\ref{comptroller}) to $$\inner{B_iA^{k-1}f}{A^{k-1}f}_1^2.$$ This is now controlled by (\ref{finalboundinderivativeproof}) as required. 

\end{proof}

\subsection{Strong Solutions of the Fully Hyperdissipative Equation} \label{subs reg of hyper}

We now prove a first existence and uniqueness result of this paper, achieved as a swift application of the variational framework developed in [\cite{goodair2024weak}]. This result will be improved upon in Subsection \ref{subs improved reg} by reducing the required hyperdissipation, though its proof relies upon the solutions given here. 

\begin{definition}
    Two processes $u$ and $v$ are said to be indistinguishable if  $$ \mathbbm{P}\left(\left\{\omega \in \Omega: u_t(\omega) = v_t(\omega) \quad \forall t \in [0,T] \right\}\right) = 1.$$
\end{definition}

\begin{proposition} \label{2d strong  navier hyperdis1}
    Let $\alpha = 2\kappa$ and $\kappa \geq 0$. For a given $m \in \N$ let $u_0: \Omega \rightarrow D(A^{\frac{m}{2}})$ be $\mathcal{F}_0-$measurable and $\xi_i \in L^2_{\sigma} \cap W^{m+1,2}_0 \cap W^{m+3,\infty}$ such that $\sum_{i=1}^\infty \norm{\xi_i}_{W^{m+2,\infty}}^2 < \infty$. Then there exists a progressively measurable process $u$ in $D(A^m)$ such that for $\mathbbm{P}-a.e.$ $\omega$, $u_{\cdot}(\omega) \in C\left([0,T];D(A^{\frac{m}{2}})\right) \cap L^2\left([0,T];D(A^m)\right)$ and \begin{equation} \nonumber
    u_t = u_0 - \int_0^t\mathcal{P}\mathcal{L}_{u_s}u_s\ ds - \nu\int_0^t A^m u_s\, ds + \frac{1}{2}\int_0^t\sum_{i=1}^\infty \mathcal{P}B_i^2u_s ds - \int_0^t \mathcal{P}Bu_s d\mathcal{W}_s 
\end{equation} holds $\mathbbm{P}-a.s.$ in $L^2_{\sigma}$ for all $t \in [0,T]$. Moreover if $v$ is any other such process, then $u$ and $v$ are indistinguishable.
\end{proposition}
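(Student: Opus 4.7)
The plan is to set this up as a direct application of the variational existence-and-uniqueness framework of [goodair2024weak] (Appendix I), using the Gelfand triple
\[
D(A^m) \hookrightarrow D(A^{m/2}) \hookrightarrow D(A^m)^{*},
\]
where $D(A^{m/2})$ is identified with its own dual via the inner product $\inner{\cdot}{\cdot}_{A^{m/2}}$. By Proposition \ref{prop all norm equivalences DA} these are Hilbert spaces equivalent respectively to $W^{2m,2}$- and $W^{m,2}$-type subspaces, and the embedding $D(A^m)\hookrightarrow D(A^{m/2})$ is compact by the spectral definition. The drift is taken as $\mathcal{A}(u) = -\mathcal{P}\mathcal{L}_u u - \nu A^{m}u + \tfrac{1}{2}\sum_i \mathcal{P}B_i^{2}u$ and the diffusion as $\mathcal{G}(u) = -\mathcal{P}Bu$; Lemma \ref{lemma for noise in right space} (and Corollary \ref{odd corollary} for odd $m$) with the summability hypothesis on $\xi_i$ ensure these are well-defined from $D(A^m)$ into $D(A^{m/2})^{*}$ and $\mathscr{L}^{2}(\mathfrak{U};D(A^{m/2}))$ respectively.

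The crucial checks are the coercivity and the stochastic energy estimate. Coercivity at level $D(A^{m/2})$ is immediate: using Lemma \ref{lemma for passing stokes powers}, $\inner{A^{m}u}{u}_{A^{m/2}} = \norm{u}_{A^{m}}^{2}$, so the dissipative term contributes $\nu\norm{u}_{A^{m}}^{2}$, which is precisely the $V$-norm squared. For the combined It\^{o}--Stratonovich corrector and quadratic variation, Proposition \ref{prop high order estimates 1} yields, after summing over $i$,
\[
\sum_i \bigl( \inner{\mathcal{P}B_i^{2}u}{u}_{A^{m/2}} + \norm{\mathcal{P}B_iu}_{A^{m/2}}^{2}\bigr) \,\leq\, c_{\varepsilon}\,C_{\xi}\norm{u}_{A^{m/2}}^{2} + \varepsilon\, C_{\xi}\norm{u}_{A^{(m+1)/2}}^{2},
\]
where $C_{\xi} := \sum_i \norm{\xi_i}_{W^{m+2,\infty}}^{2}$. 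Since $(m+1)/2 \leq m$ for $m\geq 1$, we have $\norm{u}_{A^{(m+1)/2}}^{2} \leq C\norm{u}_{A^{m/2}}^{\theta}\norm{u}_{A^{m}}^{2-\theta}$ by interpolation (for suitable $\theta$), so Young's inequality absorbs this into $\nu\norm{u}_{A^{m}}^{2}$ upon taking $\varepsilon$ sufficiently small relative to $\nu/C_{\xi}$. The analogous pointwise-in-$i$ quadratic bound needed to close the Burkholder--Davis--Gundy term comes from Proposition \ref{high order estimates 2}.

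For the nonlinear term, note that $u \in D(A^m)$ entails $u \in W^{2m,2}$, so $\mathcal{P}\mathcal{L}_u u$ is classically smooth. Its contribution $\inner{\mathcal{P}\mathcal{L}_u u}{u}_{A^{m/2}}$ is bounded by writing $\inner{\cdot}{\cdot}_{A^{m/2}}$ via Lemma \ref{lemma for passing stokes powers}, distributing derivatives and using Lemma \ref{navier boundary nonlinear} together with Lemma \ref{the 2D bound lemma} (and its higher-derivative variants via 2D Sobolev embeddings) to obtain control of the form $c(1+\norm{u}_{A^{m/2}}^{2k})\norm{u}_{A^{m/2}}^{2} + \tfrac{\nu}{2}\norm{u}_{A^{m}}^{2}$, the gradient excess again being absorbed by the hyperdissipation. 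Together these furnish the coercivity and growth hypotheses of the framework, and an essentially identical computation applied to $u-v$ for two solutions delivers the local monotonicity that yields pathwise uniqueness, from which indistinguishability follows.

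The main obstacle is the bookkeeping required to absorb the gradient-loss term $\varepsilon\norm{u}_{A^{(m+1)/2}}^{2}$ from the noise estimate into the hyperdissipation: this works only because $\beta = m$ supplies enough regularity room ($2m\geq m+1$), and it is precisely this requirement that will need to be relaxed in Subsection \ref{subs improved reg} when the sharper estimates allow $\beta = m-1$. Apart from this quantitative check, every ingredient is supplied directly by Subsection \ref{subs frac} and Subsection \ref{subs noise estimates}, so the proof is indeed a verification of hypotheses rather than an independent construction.
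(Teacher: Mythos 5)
Your proposal correctly identifies most of the ingredients (the noise estimates of Propositions \ref{prop high order estimates 1} and \ref{high order estimates 2}, the identity $\inner{A^m u}{u}_{A^{m/2}}=\norm{u}_{A^m}^2$, and the absorption of the $\varepsilon$-term into the dissipation), but it misses the one step that is actually the crux of the result: the estimate on the nonlinear term must be carried out on the \emph{Galerkin-projected} quantity $\inner{\bar{\mathcal{P}}_n\mathcal{P}\mathcal{L}_{\phi^n}\phi^n}{\phi^n}_{A^{\frac{m}{2}}}$, not on $\inner{\mathcal{P}\mathcal{L}_u u}{u}_{A^{\frac{m}{2}}}$. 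Your plan to bound the latter by ``distributing derivatives'' via Lemma \ref{lemma for passing stokes powers} silently requires $\mathcal{P}\mathcal{L}_u u \in D(A^{s})$ for some $s>\frac{1}{2}$, which fails in general: $D(A)=\bar{W}^{2,2}_{\alpha}$ encodes the Navier boundary conditions, and the nonlinear term does not satisfy them. For the same reason you cannot move $\bar{\mathcal{P}}_n$ to the other side of the inner product or invoke uniform boundedness of $\bar{\mathcal{P}}_n$ on the range of the nonlinearity — Lemma \ref{lemma for explosion of galerkin} shows the projections explode on $W^{k,2}\cap L^2_{\sigma}$ off the boundary-condition subspace, and the paper flags exactly this as ``the reason why we require hyperdissipativity at all.'' The paper's fix is to use only the cheap membership $\mathcal{P}\mathcal{L}_{\phi^n}\phi^n\in D(A^{\frac{1}{2}})=\bar{W}^{1,2}_{\sigma}$ (no second-order boundary condition needed), write $\inner{\bar{\mathcal{P}}_n\mathcal{P}\mathcal{L}_{\phi^n}\phi^n}{\phi^n}_{A^{\frac{m}{2}}}=\inner{A^{\frac{1}{2}}\bar{\mathcal{P}}_n\mathcal{P}\mathcal{L}_{\phi^n}\phi^n}{A^{\frac{2m-1}{2}}\phi^n}$, bound the first factor by $c\norm{\phi^n}_{W^{2,2}}^2$, and absorb $\norm{\phi^n}_{A^{\frac{2m-1}{2}}}$ into $\nu\norm{\phi^n}_{A^m}^2$ by Young. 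This asymmetric splitting is precisely what dictates the amount of hyperdissipation; without it your argument does not close.

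A secondary structural discrepancy: the framework of Appendix I is a three-space setup $V\hookrightarrow H\hookrightarrow U$ with $U=L^2_{\sigma}$ as the space in which the equation holds and in which uniqueness (Assumption Set 2) is checked, together with the induced Gelfand triple $H\hookrightarrow U\hookrightarrow H^*$. Your triple $D(A^m)\hookrightarrow D(A^{\frac{m}{2}})\hookrightarrow D(A^m)^*$, with $D(A^{\frac{m}{2}})$ as pivot, is the classical monotone-operator setup and is not what is being applied; in particular your proposed uniqueness argument (``an essentially identical computation applied to $u-v$'') would demand a local monotonicity estimate in the $D(A^{\frac{m}{2}})$ inner product, which runs into the same boundary-condition obstruction, whereas the paper obtains uniqueness through comparatively easy $L^2_{\sigma}$-level estimates.
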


This result was proven in [\cite{goodair2023navier}] for the case $m=1$, 
as an application of Theorem \ref{theorem for strong existence} presented in the appendix. Our proof of Proposition \ref{2d strong  navier hyperdis1} again comes as an application of Theorem \ref{theorem for strong existence}, so we present the details of this application here for $m \geq 2$. Our equation verifies the functional framework of Subsection \ref{subs functional framework} for the spaces \begin{align*}
    V:= D(A^m), \qquad H:= D(A^{\frac{m}{2}}), \qquad U:= L^2_{\sigma}.
\end{align*}
We note that $D(A^{\frac{m}{2}}) \hookrightarrow \bar{W}^{1,2}_{\sigma}$ which is compactly embedded into $L^2_{\sigma}$. We use the system of eigenfunctions of the Stokes Operator from Lemma \ref{eigenfunctions for navier}, whose properties are explored in Subsection \ref{functional framework subsection}. Note that the nonlinear term maps from $D(A^{\frac{m}{2}})$ into $L^2_{\sigma}$ (recall once more that $m \geq 2$), so its representation as an element of $\left(D(A^{\frac{m}{2}})\right)^*$ is clear. For the hyperdissipative term, we have that $$\inner{A^mf}{g}_{\left(D(A^{\frac{m}{2}})\right)^* \times D(A^{\frac{m}{2}})} = \inner{A^{\frac{m}{2}}f}{A^{\frac{m}{2}}g}.$$
The verification of Assumption Sets 2 and 3 pose no additional difficulties to the case $m=1$, as these are $L^2_{\sigma}$ based estimates. For Assumption Set 3, we may take $\bar{H} = H = D(A^{\frac{m}{2}})$, recalling that the noise maps into the right space from Lemma \ref{lemma for noise in right space}. The approximation (\ref{mu2}) was shown in Proposition \ref{big boy}. Assumption \ref{newfacilitator} sees little difference to the $m=1$ case, so we conclude by addressing Assumption \ref{assumptions for uniform bounds2}. The necessary noise estimates were the content of Propositions \ref{prop high order estimates 1} and \ref{high order estimates 2}, using that $\bar{\mathcal{P}}_n$ is self-adjoint in this space, and taking $$\varepsilon = \frac{\nu}{2\sum_{i=1}^\infty \norm{\xi_i}_{W^{m+2,\infty}}^2}$$ in Proposition \ref{prop high order estimates 1}. Of course the hyperdissipative term gives us simply $$\inner{A^m\phi^n}{\phi^n}_{A^{\frac{m}{2}}} = \norm{\phi^n}_{A^m}^2 $$ from Lemma \ref{lemma for passing stokes powers}. In the nonlinear term we do not have that $\mathcal{P}\mathcal{L}_{\phi^n}\phi^n \in D(A^{\frac{m}{2}})$ so we cannot simply pass the $\bar{\mathcal{P}}_n$ to the other side, or use some uniform boundedness property in this space.\footnote{This fact is the reason why we require hyperdissipativity at all.} To control this term, we instead observe that $$\inner{\bar{\mathcal{P}}_n\mathcal{P}\mathcal{L}_{\phi^n}\phi^n}{\phi^n}_{A^{\frac{m}{2}}} = \inner{A^{\frac{1}{2}}\bar{\mathcal{P}}_n\mathcal{P}\mathcal{L}_{\phi^n}\phi^n}{A^{\frac{2m-1}{2}}\phi^n} \leq \norm{\mathcal{L}_{\phi^n}\phi^n}_{W^{1,2}}\norm{A^{\frac{2m-1}{2}}\phi^n}$$
using that $\mathcal{P}\mathcal{L}_{\phi^n}\phi^n \in D(A^{\frac{1}{2}})$, the equivalence of the norms and the boundedness of the Leray Projector. The control $$\norm{\mathcal{L}_{\phi^n}\phi^n}_{W^{1,2}} \leq c\norm{\phi^n}_{W^{2,2}}^2$$
is classical. Using the possibly coarse bound $\norm{\phi^n}_{W^{2,2}} \leq c\norm{\phi^n}_{W^{m,2}}$, the equivalence of the given norms with their fractional Stokes counterparts as well as Young's Inequality, we ultimately have that
\begin{align} \nonumber
\inner{\bar{\mathcal{P}}_n\mathcal{P}\mathcal{L}_{\phi^n}\phi^n}{\phi^n}_{A^{\frac{m}{2}}}
\leq c\norm{\phi^n}_{A^{\frac{m}{2}}}^4 + \frac{\nu}{2}\norm{\phi^n}_{A^m}^2.
\end{align}
In fact we have a sharper bound,
\begin{align}
    \inner{\bar{\mathcal{P}}_n\mathcal{P}\mathcal{L}_{\phi^n}\phi^n}{\phi^n}_{A^{\frac{m}{2}}} \leq c\norm{\phi^n}_{A}^2\norm{\phi^n}_{A^{\frac{m}{2}}}^2 + \frac{\nu}{2}\norm{\phi^n}_{A^{\frac{2m-1}{2}}}^2 \label{lecombina 1}
\end{align}
 which will prove relevant in Theorem \ref{2d strong  navier hyperdis main}. This completely verifies an application of Theorem \ref{theorem for strong existence} which only leaves the pathwise continuity in Proposition \ref{2d strong  navier hyperdis1} to be shown, for which we apply Lemma \ref{continuity lemma 2} with the bilinear form $$\inner{f}{\phi}_{U \times V}:= \inner{f}{A^m\phi}.$$

\section{Existence of Strong Solutions} \label{section existence of strong solutions}

In this section we state and prove the main existence and uniqueness results of this work. Subsection \ref{subs improved reg} is concerned with reducing the dissipation required in Proposition \ref{2d strong  navier hyperdis1}, leading to our key existence result for the It\^{o} form (\ref{projected Ito Salt hyperdiss}). In Subsection \ref{subs strong strat}, the solutions obtained in Subsection \ref{subs improved reg} are shown to be strong solutions of the Stratonovich form (\ref{projected strato Salt hyperdiss}). We use the abstract It\^{o}-Stratonovich conversion of [\cite{goodair2022stochastic}], stated in the appendix, Section \ref{Appendix II}. Particular attention is drawn to the case of no hyperdissipation, obtaining genuine strong solutions of (\ref{projected strato Salt}). 

\subsection{Improved Regularity with Reduced Dissipation} \label{subs improved reg}

The main result of this subsection is the following.

\begin{theorem} \label{2d strong  navier hyperdis main}
    Let $\alpha = 2\kappa$ and $\kappa \geq 0$. For a given $m \in \N$, $m \geq 2$, let $u_0: \Omega \rightarrow D(A^{\frac{m}{2}})$ be $\mathcal{F}_0-$measurable, $\xi_i \in L^2_{\sigma} \cap W^{m+1,2}_0 \cap W^{m+3,\infty}$ such that $\sum_{i=1}^\infty \norm{\xi_i}_{W^{m+2,\infty}}^2 < \infty$. Then there exists a progressively measurable process $u$ in $D(A^{\frac{2m-1}{2}})$ such that for $\mathbbm{P}-a.e.$ $\omega$, $u_{\cdot}(\omega) \in C\left([0,T];D(A^{\frac{m}{2}})\right) \cap L^2\left([0,T];D(A^{\frac{2m-1}{2}})\right)$ and \begin{equation} \label{numbertime}
    u_t = u_0 - \int_0^t\mathcal{P}\mathcal{L}_{u_s}u_s\ ds - \nu\int_0^t A^{m-1} u_s\, ds + \frac{1}{2}\int_0^t\sum_{i=1}^\infty \mathcal{P}B_i^2u_s ds - \int_0^t \mathcal{P}Bu_s d\mathcal{W}_s 
\end{equation} holds $\mathbbm{P}-a.s.$ in $\bar{W}^{1,2}_{\sigma}$ for all $t \in [0,T]$. Moreover if $v$ is any other such process, then $u$ and $v$ are indistinguishable.
\end{theorem}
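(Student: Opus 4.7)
The strategy is to realise (\ref{numbertime}) as the vanishing-hyperdissipation limit of equations to which Proposition \ref{2d strong  navier hyperdis1} already applies. For each $\epsilon > 0$, I will consider the equation obtained by replacing the dissipation $-\nu A^{m-1}u_s$ in (\ref{numbertime}) by $-\nu A^{m-1}u_s - \epsilon A^m u_s$: all hypotheses of Proposition \ref{2d strong  navier hyperdis1} remain in force (the $\nu A^{m-1}$ term is merely an additional coercive linear drift of lower order than $\epsilon A^m$, strengthening rather than obstructing the relevant estimates), yielding a unique strong solution $u^\epsilon \in C([0,T]; D(A^{m/2})) \cap L^2([0,T]; D(A^m))$ for every $\epsilon > 0$. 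The proof then reduces to deriving $\epsilon$-uniform bounds strong enough to pass to the limit as $\epsilon \to 0$.

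Two energy estimates are needed. The first, at the $A^{1/2}$ level, applies It\^o's formula to $\norm{u^\epsilon}_{A^{1/2}}^2$ and controls the nonlinear contribution $\abs{\inner{\mathcal{L}_{u^\epsilon}u^\epsilon}{Au^\epsilon}}$ by the classical 2D bound $c\norm{u^\epsilon}^{1/2}\norm{u^\epsilon}_{A^{1/2}}\norm{u^\epsilon}_A^{3/2}$. Since $m \geq 2$ gives $D(A^{m/2}) \hookrightarrow D(A)$, the dissipation $\nu\norm{u^\epsilon}_{A^{m/2}}^2$ produced by the $\nu A^{m-1}$ term absorbs via Young's inequality the $\norm{u^\epsilon}_A^2$-piece; combined with the $L^2$-noise estimates (\ref{combinedterminenergyinequality})--(\ref{finalboundinderivativeproof}) and a stochastic Gr\"onwall/BDG argument this yields an $\epsilon$-uniform bound for $u^\epsilon$ in $L^2(\Omega; L^\infty([0,T]; D(A^{1/2}))) \cap L^2(\Omega \times [0,T]; D(A^{m/2}))$, controlling in particular $\int_0^T \norm{u^\epsilon_s}_A^2\,ds$ uniformly in $\epsilon$. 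The main estimate at the $A^{m/2}$ level then combines It\^o's formula on $\norm{u^\epsilon}_{A^{m/2}}^2$ with Propositions \ref{prop high order estimates 1} and \ref{high order estimates 2}, tuning the free parameter in Proposition \ref{prop high order estimates 1} as $\nu/(4\sum_i\norm{\xi_i}^2_{W^{m+2,\infty}})$ so that the resulting $\varepsilon$-piece is absorbed by $\tfrac{\nu}{2}\norm{u^\epsilon}_{A^{(2m-1)/2}}^2$ from the viscous term. The crux is then the sharper nonlinear bound (\ref{lecombina 1}), which extracts exactly the $\tfrac{\nu}{2}\norm{u^\epsilon}_{A^{(2m-1)/2}}^2$-amount of derivative needed so that the reduction from $A^m$ to $A^{m-1}$ dissipation becomes viable. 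After all absorptions, the drift bound reduces to $c\norm{u^\epsilon}_A^2\norm{u^\epsilon}_{A^{m/2}}^2$, to which a stochastic Gr\"onwall lemma applies using the random integrable coefficient $\norm{u^\epsilon}_A^2$ furnished by the first estimate, yielding an $\epsilon$-uniform bound in $L^\infty([0,T]; D(A^{m/2})) \cap L^2([0,T]; D(A^{(2m-1)/2}))$.

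With these bounds in hand, Aubin-Lions compactness in the Gelfand triple $D(A^{(2m-1)/2}) \hookrightarrow D(A^{m/2}) \hookrightarrow D(A^{(m-1)/2})$ produces a subsequential limit $u$: the $\epsilon A^m$ term vanishes, strong convergence in $L^2([0,T]; D(A^{m/2-\delta}))$ identifies the nonlinear limit, and $u$ solves (\ref{numbertime}), with pathwise continuity in $D(A^{m/2})$ obtained via Lemma \ref{continuity lemma 2} applied to the bilinear form $\inner{f}{\phi}_{U \times V}:= \inner{f}{A^{m-1}\phi}$. Uniqueness follows from a pathwise $L^2$-energy estimate on $u-v$, estimating $\abs{\inner{\mathcal{L}_{u-v}u}{u-v}}$ via Lemma \ref{the 2D bound lemma}, absorbing the resulting $\delta\norm{u-v}_1^2$ by $\nu\norm{u-v}_{A^{(m-1)/2}}^2 \geq c\nu\norm{u-v}_1^2$ (again using $m \geq 2$), handling the noise via (\ref{combinedterminenergyinequality}), and Gr\"onwall to force $u=v$; indistinguishability then follows from pathwise continuity. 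The main obstacle throughout is the two-stage Gr\"onwall structure: the reduction of hyperdissipation order delivered by (\ref{lecombina 1}) produces a random nonlinear coefficient that can only be controlled via the preliminary $A^{1/2}$-level estimate, which itself closes precisely because $m \geq 2$ supplies enough dissipation to handle the 2D nonlinear term.
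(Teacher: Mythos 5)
Your vanishing-hyperdissipation scheme is a genuinely different route from the paper's, and the analytic core of your $\epsilon$-uniform estimates (the sharpened nonlinear bound (\ref{lecombina 1}) releasing exactly $\frac{\nu}{2}\norm{\cdot}_{A^{\frac{2m-1}{2}}}^2$, the absorption of the $\varepsilon$-piece of Proposition \ref{prop high order estimates 1} into the $A^{m-1}$ dissipation, and the use of $m\geq 2$ both for $D(A^{\frac{m}{2}})\hookrightarrow D(A)$ and for the uniqueness estimate) is exactly the right mechanism. However, you have missed a much shorter path that the paper takes: equation (\ref{numbertime}) \emph{already} falls under Proposition \ref{2d strong  navier hyperdis1} applied with $m-1$ in place of $m$ (the hypotheses on $u_0$ and $\xi_i$ at level $m$ imply those at level $m-1$), which yields outright a unique solution in $C\left([0,T];D(A^{\frac{m-1}{2}})\right)\cap L^2\left([0,T];D(A^{m-1})\right)$ — no $\epsilon$-regularisation is needed for existence. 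The paper then upgrades the regularity of this fixed solution by applying the maximal-solution framework of Theorem \ref{theorem1} in the triple $D(A^{\frac{2m-1}{2}})\hookrightarrow D(A^{\frac{m}{2}})\hookrightarrow D(A^{\frac{1}{2}})$ and observing that the blow-up quantity (\ref{big if}) is finite on $[0,T]$ precisely because $m-1\geq \frac{m}{2}$ and $\frac{m-1}{2}\geq\frac{1}{2}$. This delegates the global-in-time difficulty to a local existence result plus a blow-up criterion, which is what lets the paper avoid the machinery you need below.

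The gaps in your version are concentrated in the limit passage and in closing the second Gr\"onwall. First, ``Aubin--Lions produces a subsequential limit'' is not sufficient in the stochastic setting: pathwise compactness does not by itself yield a progressively measurable limit on the original probability space, nor does it identify the limit of the stochastic integrals; you would need either tightness plus Skorokhod plus a Yamada--Watanabe argument, or (cleaner here) a Cauchy estimate on $u^{\epsilon}-u^{\epsilon'}$ in $L^2\left(\Omega;C([0,T];L^2_{\sigma})\right)$, treating the mismatch $-\epsilon A^m u^{\epsilon}+\epsilon' A^m u^{\epsilon'}$ via $-\epsilon\norm{u^{\epsilon}-u^{\epsilon'}}_{A^{\frac{m}{2}}}^2-(\epsilon-\epsilon')\inner{A^{\frac{2m-1}{2}}u^{\epsilon'}}{A^{\frac{1}{2}}(u^{\epsilon}-u^{\epsilon'})}$ together with your uniform $L^2\left([0,T];D(A^{\frac{2m-1}{2}})\right)$ bound. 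Second, the stochastic Gr\"onwall step with the random coefficient $\norm{u^{\epsilon}}_A^2$ cannot be applied globally as written: $\int_0^T\norm{u^{\epsilon}_s}_A^2\,ds$ is only a.s.\ finite, not deterministically bounded, so one must localise by stopping times $\tau^R$ at which the lower-order energy reaches $R$, prove the high-order bound up to $\tau^R$, and then remove the localisation using the preliminary estimate — this is a real (if standard) layer of argument that your sketch elides. Finally, a small but genuine error: the bilinear form you propose for Lemma \ref{continuity lemma 2}, $\inner{f}{A^{m-1}\phi}$, reproduces $\inner{f}{\phi}_{A^{\frac{m-1}{2}}}$ rather than $\inner{f}{\phi}_{A^{\frac{m}{2}}}$; in the triple $D(A^{\frac{2m-1}{2}})\hookrightarrow D(A^{\frac{m}{2}})\hookrightarrow D(A^{\frac{1}{2}})$ the correct choice is $\inner{f}{\psi}:=\inner{A^{\frac{1}{2}}f}{A^{\frac{2m-1}{2}}\psi}$.
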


Our idea is to apply Theorem \ref{theorem1} with the spaces $$\mathscr{V}:= D(A^{\frac{2m-1}{2}}), \qquad \mathscr{H}:= D(A^{\frac{m}{2}}), \qquad \mathscr{U}:= D(A^{\frac{1}{2}})$$ to the equation (\ref{numbertime}), verifying the existence of a maximal solution until blow-up in a norm which is known to remain finite on $[0,T]$ from Proposition \ref{2d strong  navier hyperdis1}. To make our use of Proposition \ref{2d strong  navier hyperdis1} precise we fix the assumptions of Theorem \ref{2d strong  navier hyperdis main} and write down the application of this proposition to equation (\ref{numbertime}) as the following lemma.

\begin{lemma} \label{for the unique u}
    There exists a progressively measurable process $u$ in $D(A^{m-1})$ such that for $\mathbbm{P}-a.e.$ $\omega$, $u_{\cdot}(\omega) \in C\left([0,T];D(A^{\frac{m-1}{2}})\right) \cap L^2\left([0,T];D(A^{m-1})\right)$ and (\ref{numbertime}) holds $\mathbbm{P}-a.s.$ in $L^2_{\sigma}$ for all $t \in [0,T]$. Moreover if $v$ is any other such process, then $u$ and $v$ are indistinguishable.
\end{lemma}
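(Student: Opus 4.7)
The plan is to recognise Lemma \ref{for the unique u} as an immediate instance of Proposition \ref{2d strong  navier hyperdis1} with its index $m$ replaced by $m-1$, so that the task reduces to bookkeeping: checking that equation (\ref{numbertime}) matches the equation treated in that proposition under the same substitution, and that the hypotheses currently in force are at least as strong as those required by that proposition after the shift.

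First I would match the equations. Proposition \ref{2d strong  navier hyperdis1} treats the fully hyperdissipative equation whose dissipative term is $\nu A^m u_s$, while (\ref{numbertime}) carries $\nu A^{m-1} u_s$; every other term (initial condition, projected nonlinearity, It\^{o}--Stratonovich corrector, stochastic integral) is identical. Thus (\ref{numbertime}) is exactly the equation of Proposition \ref{2d strong  navier hyperdis1} with $m-1$ in place of $m$, and since $m \geq 2$ by assumption the shifted index $m-1$ remains a positive integer, so the application is admissible.

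Next I would verify the input hypotheses under this substitution. For the initial condition, Proposition \ref{2d strong  navier hyperdis1} with index $m-1$ asks for $u_0 : \Omega \to D(A^{(m-1)/2})$; this is supplied by the present hypothesis $u_0 : \Omega \to D(A^{m/2})$ together with the inclusion $D(A^{m/2}) \subseteq D(A^{(m-1)/2})$ granted by item \ref{first item} of Lemma \ref{lemma for passing stokes powers}, and $\mathcal{F}_0$-measurability is preserved trivially. For the spatial correlations, the shifted proposition requires $\xi_i \in L^2_\sigma \cap W^{m,2}_0 \cap W^{m+2,\infty}$ with $\sum_{i=1}^\infty \norm{\xi_i}_{W^{m+1,\infty}}^2 < \infty$, both of which are strictly implied by the present hypotheses $\xi_i \in L^2_\sigma \cap W^{m+1,2}_0 \cap W^{m+3,\infty}$ and $\sum_{i=1}^\infty \norm{\xi_i}_{W^{m+2,\infty}}^2 < \infty$ via the elementary inclusions between these Sobolev spaces.

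Applying Proposition \ref{2d strong  navier hyperdis1} with index $m-1$ then yields a progressively measurable process $u$ in $D(A^{m-1})$ whose paths lie $\mathbb{P}$-almost surely in $C([0,T]; D(A^{(m-1)/2})) \cap L^2([0,T]; D(A^{m-1}))$, solve (\ref{numbertime}) in $L^2_\sigma$ for every $t \in [0,T]$, and are unique up to indistinguishability, which is precisely the conclusion of Lemma \ref{for the unique u}. There is no genuine obstacle here: the only point that requires attention is the index constraint $m \geq 2$, which is exactly what guarantees that the shifted index is still in $\mathbb{N}$ so that Proposition \ref{2d strong  navier hyperdis1} is applicable.
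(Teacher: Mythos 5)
Your proposal is correct and is exactly the paper's own argument: the paper introduces Lemma \ref{for the unique u} explicitly as ``the application of'' Proposition \ref{2d strong  navier hyperdis1} to equation (\ref{numbertime}), i.e.\ with the index shifted from $m$ to $m-1$, which is admissible since $m \geq 2$. Your verification that the standing hypotheses ($u_0 \in D(A^{\frac{m}{2}}) \subseteq D(A^{\frac{m-1}{2}})$, $\xi_i \in W^{m+1,2}_0 \cap W^{m+3,\infty} \subseteq W^{m,2}_0 \cap W^{m+2,\infty}$, and summability of the $W^{m+1,\infty}$ norms) dominate the shifted ones is the only content needed, and it is carried out correctly.
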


Our use of Theorem \ref{theorem1} can be seen as upgrading the regularity on the unique process $u$ specified in Lemma \ref{for the unique u} to that required in Theorem \ref{2d strong  navier hyperdis main}. To verify the assumptions of Theorem \ref{theorem1} we note that in the case $m=2$ these assumptions were completely verified \textit{on the torus} in [\cite{goodair20233d}] Section 3. Given the work done in Subsections \ref{subs frac} and \ref{subs noise estimates} there are no additional details needed to verify the assumptions in our case. Indeed even for larger $m$ it is only Assumption \ref{uniform assumpt} that requires particular attention, as the space $\mathscr{U}$ remains the same. For the nonlinear term we use (\ref{lecombina 1}), and for the Stokes Operator we simply note that $$\inner{A^{m-1}\phi^n}{\phi^n}_{A^{\frac{m}{2}}} = \norm{\phi^n}_{A^{\frac{2m-1}{2}}}^2.$$ Estimates on the noise again come from Subsection \ref{subs noise estimates}, and the bilinear form takes the representation $$\inner{f}{\psi}_{\mathscr{U} \times \mathscr{V}} := \inner{A^{\frac{1}{2}}f}{A^{\frac{2m-1}{2}}\psi} = \inner{f}{\psi}_{A^{\frac{m}{2}}} $$ for $f \in D(A^{\frac{m}{2}})$. We omit further details and apply Theorem \ref{theorem1} in this context, obtaining a maximal solution of (\ref{numbertime}) in the sense of Definition \ref{V valued maximal definition} which in light of uniqueness must agree with the $u$ specified in Lemma \ref{for the unique u} on its time of existence. We deduce the following.

\begin{lemma} \label{lemma for higher order}
    Let $u$ be the unique process specified in Lemma \ref{for the unique u}. Then if \begin{equation} \label{big if}\sup_{r \in [0,T)}\norm{u_r}_{A^{\frac{1}{2}}}^2 + \int_0^{T}\norm{u_r}_{ A^{\frac{m}{2}}}^2dr < \infty \end{equation} $\mathbbm{P}-a.s.$, we have that $u$ is progressively measurable process in $D(A^{\frac{2m-1}{2}})$ and for $\mathbbm{P}-a.e.$ $\omega$, $u_{\cdot}(\omega) \in C\left([0,T];D(A^{\frac{m}{2}})\right) \cap L^2\left([0,T];D(A^{\frac{2m-1}{2}})\right)$.
\end{lemma}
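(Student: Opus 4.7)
The plan is to invoke directly the maximal-solution apparatus already set up in the discussion preceding the lemma. Theorem \ref{theorem1} applied to (\ref{numbertime}) with
$$ \mathscr{V}=D(A^{\frac{2m-1}{2}}),\qquad \mathscr{H}=D(A^{\frac{m}{2}}),\qquad \mathscr{U}=D(A^{\frac{1}{2}}) $$
yields a maximal pair $(\tilde u,\tau)$, where $\tilde u$ is progressively measurable with values in $\mathscr{V}$ and has paths in $C([0,\tau);\mathscr{H}) \cap L^2_{\mathrm{loc}}([0,\tau);\mathscr{V})$, and $\tau \in (0,T]$ is a stopping time. Uniqueness at the lower regularity level provided by Lemma \ref{for the unique u} forces $\tilde u \equiv u$ on the random interval $[0,\tau)$ almost surely, a fact the author has already recorded. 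The content of the lemma therefore reduces to proving that $\tau = T$ $\mathbbm{P}$-a.s.

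The key ingredient is the explosion alternative built into Definition \ref{V valued maximal definition}: on the event $\{\tau < T\}$,
$$ \sup_{r \in [0,\tau)}\norm{\tilde u_r}_{\mathscr{U}}^2 + \int_0^{\tau}\norm{\tilde u_r}_{\mathscr{H}}^2\,dr = \infty. $$
With the identifications $\mathscr{U}=D(A^{\frac{1}{2}})$, $\mathscr{H}=D(A^{\frac{m}{2}})$ and $\tilde u \equiv u$ on $[0,\tau)$, the left-hand side is bounded above by the quantity in the hypothesis (\ref{big if}), which is a.s. finite. Hence $\mathbbm{P}(\{\tau < T\}) = 0$ and $\tau = T$ almost surely.

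Once $\tau = T$ a.s. is secured, the transfer of regularity to $u$ is automatic. Progressive measurability of $\tilde u$ in $\mathscr{V}$ descends to the same property for $u$, and the pathwise membership in $C([0,\tau);\mathscr{H}) \cap L^2([0,\tau);\mathscr{V})$ becomes membership in $C([0,T);D(A^{\frac{m}{2}})) \cap L^2([0,T];D(A^{\frac{2m-1}{2}}))$. Extension of continuity up to $t=T$ is handled exactly as at the end of the verification for Proposition \ref{2d strong  navier hyperdis1}, via a further application of Lemma \ref{continuity lemma 2} with the pairing $\inner{f}{\psi}_{\mathscr{U}\times\mathscr{V}} := \inner{f}{\psi}_{A^{\frac{m}{2}}}$, closing the continuity gap at the right endpoint.

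No genuine obstacle arises in the argument; the only careful point is to confirm that the explosive norm in the Definition \ref{V valued maximal definition} statement of Theorem \ref{theorem1} is precisely $\sup_{[0,\tau)}\norm{\cdot}_\mathscr{U}^2 + \int_0^\tau \norm{\cdot}_\mathscr{H}^2\,dr$, so that the hypothesis (\ref{big if}) matches it verbatim. That match is guaranteed by the choice of spaces made immediately above the lemma, so the entire proof amounts to invoking the blow-up criterion and reading off the conclusion.
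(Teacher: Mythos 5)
Your argument is correct and is essentially the paper's: both apply Theorem \ref{theorem1} with $\mathscr{V}=D(A^{\frac{2m-1}{2}})$, $\mathscr{H}=D(A^{\frac{m}{2}})$, $\mathscr{U}=D(A^{\frac{1}{2}})$ and use that (\ref{big if}) rules out blow-up; the paper simply invokes the second (``in consequence'') assertion of Theorem \ref{theorem1} directly with $\tau:=T$, which already packages the $C([0,T];\mathscr{H})\cap L^2([0,T];\mathscr{V})$ regularity via Definition \ref{v valued local def}, rather than detouring through the explosion alternative. Your final appeal to Lemma \ref{continuity lemma 2} for continuity at $t=T$ is unnecessary (and that lemma belongs to the Appendix I framework, not the maximal-solution one); the closed-interval regularity is exactly what the second assertion of Theorem \ref{theorem1} delivers.
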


\begin{proof}
    We apply Theorem \ref{theorem1} as described, where local strong solutions in this sense are progressively measurable in $D(A^{\frac{2m-1}{2}})$ and belong pathwise to $C\left([0,\tau];D(A^{\frac{m}{2}})\right) \cap L^2\left([0,\tau];D(A^{\frac{2m-1}{2}})\right)$. The result comes from the second assertion of Theorem \ref{theorem1} by choosing $\tau$ as simply $T$. 
    \end{proof}

Moreover, to prove Theorem \ref{2d strong  navier hyperdis main} it is sufficient to verify (\ref{big if}). This is, however, clear from the known regularity of Lemma \ref{for the unique u}, appreciating that as $m \geq 2$ then $m -1 \geq \frac{m}{2}$. The proof of Theorem \ref{2d strong  navier hyperdis main} is complete.

\subsection{Strong Solutions of the Stratonovich Equation} \label{subs strong strat}

The key result of this subsection is the following.

\begin{theorem} \label{Stratotheorem1}
    Let $u$ be the unique \footnote{By unique we mean `up to indistinguishability'.} process specified in Theorem \ref{2d strong  navier hyperdis main}. Then $u$ satisfies the identity 
$$  u_t = u_0 - \int_0^t\mathcal{P}\mathcal{L}_{u_s}u_s\ ds - \nu\int_0^t A^{m-1} u_s\, ds  - \int_0^t \mathcal{P}Bu_s \circ d\mathcal{W}_s$$
$\mathbbm{P}-a.s.$ in $L^2_{\sigma}$ for all $t \in [0,T]$.
\end{theorem}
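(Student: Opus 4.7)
The plan is to invoke the abstract It\^{o}--Stratonovich conversion of [\cite{goodair2022stochastic}], stated in the paper as Theorem \ref{theorem for ito strat conversion}, against the It\^{o} form already established in Theorem \ref{2d strong  navier hyperdis main}. That theorem gives us $u$ solving
\begin{equation*}
    u_t = u_0 - \int_0^t\mathcal{P}\mathcal{L}_{u_s}u_s\,ds - \nu\int_0^t A^{m-1}u_s\,ds + \frac{1}{2}\int_0^t \sum_{i=1}^\infty \mathcal{P}B_i^2 u_s\,ds - \int_0^t \mathcal{P}Bu_s\,d\mathcal{W}_s
\end{equation*}
in $\bar{W}^{1,2}_\sigma$, and we want to identify the final two terms as the Stratonovich integral $-\int_0^t \mathcal{P}Bu_s\circ d\mathcal{W}_s$. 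Since $\mathcal{P}B_i = \mathcal{P}B_i\mathcal{P}$ and hence $\mathcal{P}B_i^2 = (\mathcal{P}B_i)^2$, the It\^{o}--Stratonovich corrector already has the correct algebraic form; it only remains to check the analytic hypotheses of the abstract conversion.

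To apply Theorem \ref{theorem for ito strat conversion} we choose the pivot Hilbert space as $L^2_\sigma$ and work in the Gelfand triple $D(A^{\frac{1}{2}}) \hookrightarrow L^2_\sigma \hookrightarrow D(A^{\frac{1}{2}})^*$, with the operator $\mathcal{P}B$ viewed as a Hilbert--Schmidt map from $\mathfrak{U}$ into $L^2_\sigma$ when applied to functions in $D(A^{\frac{1}{2}})$, and into $D(A^{\frac{1}{2}})^*$ when applied to functions in $L^2_\sigma$. The gain of one derivative required for the rigorous conversion is exactly what the regularity of $u$ provides: since $m \geq 2$, we have $\frac{2m-1}{2}\geq \frac{3}{2}$ so that pathwise $u \in L^2([0,T]; D(A^{\frac{2m-1}{2}})) \subseteq L^2([0,T]; D(A^{\frac{3}{2}}))$, meaning $\mathcal{P}Bu$ lives pathwise in $L^2([0,T]; D(A^{\frac{1}{2}}))$ by the estimate (\ref{boundsonB_i}), combined with the boundary characterisations of Subsection \ref{subs frac} (the curl $\textnormal{curl}(\mathcal{P}B_i u) = \mathcal{L}_{\xi_i}\textnormal{curl}u$ vanishes on $\partial \mathscr{O}$ when $\xi_i$ vanishes at the boundary with the required order), so in particular $\mathcal{P}B_i u \in D(A^{\frac{1}{2}}) = \bar{W}^{1,2}_\sigma$. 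A second application of $\mathcal{P}B_i$ using (\ref{boundsonB_i}) again, together with $\sum_i \norm{\xi_i}_{W^{m+2,\infty}}^2 < \infty$, then yields $\sum_i \mathcal{P}B_i^2 u$ as a convergent sum in $L^2_\sigma$, integrable in time.

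With these ingredients, the conversion theorem identifies
\begin{equation*}
\int_0^t \mathcal{P}Bu_s \circ d\mathcal{W}_s = \int_0^t \mathcal{P}Bu_s\,d\mathcal{W}_s + \frac{1}{2}\int_0^t \sum_{i=1}^\infty \mathcal{P}B_i(\mathcal{P}B_i u_s)\,ds,
\end{equation*}
and using $\mathcal{P}B_i = \mathcal{P}B_i \mathcal{P}$ (hence $\mathcal{P}B_i^2 = \mathcal{P}B_i\mathcal{P}B_i$) this rewrites the It\^{o} equation as the claimed Stratonovich form in $L^2_\sigma$. The main obstacle is purely bookkeeping: we must check that $u$ meets the precise regularity required by Theorem \ref{theorem for ito strat conversion} (progressive measurability into the smaller space, pathwise $L^2$ integrability, and the Hilbert--Schmidt nature of $\mathcal{P}B$ in the chosen triple), and this is exactly the content of Theorem \ref{2d strong  navier hyperdis main} combined with the operator bounds (\ref{T_ibound})--(\ref{boundsonB_i}). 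The fact that the identity is asserted in $L^2_\sigma$ rather than the stronger space $\bar{W}^{1,2}_\sigma$ of the It\^{o} equation reflects that conversion costs one derivative; the equality of the two processes nevertheless holds $\mathbbm{P}$-a.s.\ for all $t \in [0,T]$ as both are continuous $L^2_\sigma$-valued processes agreeing $\mathbbm{P}$-a.s.\ at each fixed $t$.
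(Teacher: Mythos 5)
Your overall strategy is exactly the paper's: the result is obtained by feeding the regularity of Theorem \ref{2d strong  navier hyperdis main} and the noise mapping properties of Subsection \ref{subs noise estimates} into the abstract conversion Theorem \ref{theorem for ito strat conversion}. The one place you go astray is in the instantiation of the function spaces. Theorem \ref{theorem for ito strat conversion} is formulated for a \emph{quartet} of nested Hilbert spaces $V \hookrightarrow H \hookrightarrow U \hookrightarrow X$, with the It\^{o} identity assumed in $U$ and the Stratonovich identity delivered in $X$; it is not a Gelfand-triple statement, and no dual spaces appear. The correct choice, which the paper makes, is $V:=D(A^{\frac{2m-1}{2}})$, $H:=D(A^{\frac{m}{2}})$, $U:=D(A^{\frac{1}{2}})$, $X:=L^2_{\sigma}$: then the It\^{o} equation holds in $U=\bar{W}^{1,2}_{\sigma}$ by Theorem \ref{2d strong  navier hyperdis main}, $u$ is progressively measurable in $V$ with paths in $C\left([0,T];H\right)\cap L^2\left([0,T];V\right)$, Assumption \ref{Qassumpt} holds since $A^{m-1}$ and $\mathcal{P}\mathcal{L}$ map $V$ into $U$, and Assumption \ref{Gassumpt} is verified by the three levels $\mathcal{P}B_i: V \rightarrow H$, $\mathcal{P}B_i: H \rightarrow U$, $\mathcal{P}B_i: U \rightarrow X$ coming from Lemma \ref{lemma for noise in right space}, Corollary \ref{odd corollary}, (\ref{boundsonB_i}) and the summability of $\norm{\xi_i}_{W^{m+2,\infty}}^2$. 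Your triple $D(A^{\frac{1}{2}}) \hookrightarrow L^2_{\sigma} \hookrightarrow D(A^{\frac{1}{2}})^*$, read literally against the theorem, would place the Stratonovich identity only in $D(A^{\frac{1}{2}})^*$, which is weaker than the claimed $L^2_{\sigma}$; moreover Assumption \ref{Gassumpt} requires three mapping levels and you only supply two. Two smaller remarks: membership of $\mathcal{P}B_iu$ in $D(A^{\frac{1}{2}})=\bar{W}^{1,2}_{\sigma}$ needs no curl boundary condition (that enters only at the level of $D(A)=\bar{W}^{2,2}_{2\kappa}$ and above, which is where Lemma \ref{lemma for noise in right space} is actually needed for the map $V\rightarrow H$), and your closing sentence about two continuous processes agreeing at each fixed $t$ is unnecessary, since Theorem \ref{theorem for ito strat conversion} already asserts the identity for all $t$ simultaneously.
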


We emphasise the case where $m=2$ and state the full result below, using the equivalence of spaces given in Proposition \ref{prop norm equivalence for W22} and Lemma \ref{lemma for inclusion of high A}. 

\begin{theorem} \label{Stratotheorem2}
    Let $\alpha = 2\kappa$, $\kappa \geq 0$, $u_0: \Omega \rightarrow \bar{W}^{2,2}_{\alpha}$ be $\mathcal{F}_0-$measurable and $\xi_i \in L^2_{\sigma} \cap W^{3,2}_0 \cap W^{5,\infty}$ such that $\sum_{i=1}^\infty \norm{\xi_i}_{W^{4,\infty}}^2 < \infty$. Then there exists a progressively measurable process $u$ in $W^{3,2} \cap \bar{W}^{2,2}_{\alpha}$ such that for $\mathbbm{P}-a.e.$ $\omega$, $u_{\cdot}(\omega) \in C\left([0,T];\bar{W}^{2,2}_{\alpha})\right) \cap L^2\left([0,T];W^{3,2}\right)$ and $u$ satisfies the identity 
$$  u_t = u_0 - \int_0^t\mathcal{P}\mathcal{L}_{u_s}u_s\ ds - \nu\int_0^t A u_s\, ds  - \int_0^t \mathcal{P}Bu_s \circ d\mathcal{W}_s$$
$\mathbbm{P}-a.s.$ in $L^2_{\sigma}$ for all $t \in [0,T]$.
\end{theorem}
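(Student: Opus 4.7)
My approach is to recognise Theorem \ref{Stratotheorem2} as precisely the $m = 2$ instance of Theorem \ref{Stratotheorem1}, re-expressed through the Stokes-operator/Sobolev identifications of Subsection \ref{subs frac}. Under these identifications, namely $D(A) = \bar{W}^{2,2}_{\alpha}$ with equivalent norms (Proposition \ref{prop norm equivalence for W22}) and $D(A^{3/2}) = W^{3,2} \cap \bar{W}^{2,2}_{\alpha}$ (Lemma \ref{lemma for inclusion of high A}), together with $A^{m-1} = A$ for $m = 2$, the hypotheses on $u_0$ and $(\xi_i)$ transcribe verbatim into those of Theorem \ref{Stratotheorem1}, and the stated pathwise regularity and Stratonovich identity follow.

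First I would invoke Theorem \ref{2d strong  navier hyperdis main} with $m = 2$, which produces the unique progressively measurable process $u$ in $D(A^{3/2})$ with $u_{\cdot}(\omega) \in C([0,T];D(A)) \cap L^2([0,T];D(A^{3/2}))$ solving the It\^o equation
\begin{equation*}
    u_t = u_0 - \int_0^t\mathcal{P}\mathcal{L}_{u_s}u_s\, ds - \nu\int_0^t A u_s\, ds + \frac{1}{2}\int_0^t\sum_{i=1}^\infty \mathcal{P}B_i^2 u_s \, ds - \int_0^t \mathcal{P}B u_s \, d\mathcal{W}_s
\end{equation*}
in $\bar{W}^{1,2}_{\sigma}$. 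The second step is to perform the It\^o-Stratonovich conversion on this same $u$, which is the content of Theorem \ref{Stratotheorem1} and proceeds via the abstract conversion Theorem \ref{theorem for ito strat conversion}. The key ingredients I would verify are: the identity $\mathcal{P}B_i = \mathcal{P}B_i\mathcal{P}$ from Subsection \ref{sub salt}, which gives $(\mathcal{P}B_i)^2 = \mathcal{P}B_i^2$ and so aligns the It\^o corrector in the equation above with the corrector produced by the abstract conversion; and the chained mapping property $\mathcal{P}B : D(A) \to D(A^{1/2})$, ensuring that $\mathcal{P}B \circ \mathcal{P}B$ lands in $L^2_{\sigma}$ precisely because $u \in L^2([0,T];D(A^{3/2}))$ pathwise. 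This matches the Bochner integrability required for the abstract conversion to apply, and a final rewriting of the $D(A)$-spaces via the identifications of Subsection \ref{subs frac} produces the statement as written.

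The main obstacle is not in the present theorem, which is an essentially formal specialisation, but upstream in Theorem \ref{2d strong  navier hyperdis main}: one must promote the base $L^2_{\sigma}$-level solution of Proposition \ref{2d strong  navier hyperdis1} to one with $L^2_t W^{3,2}_x$ control in the absence of any true hyperdissipation ($\beta = 1$). This is where the high-order noise estimates of Propositions \ref{prop high order estimates 1} and \ref{high order estimates 2} are indispensable, since the half-derivative remainder left by the commutator computations after the $\mathcal{P}B_i^* + \mathcal{P}B_i = \mathcal{T}_{\xi_i}^* + \mathcal{T}_{\xi_i}$ cancellation must be absorbed into the viscous dissipation $\nu \norm{\cdot}_{A^{3/2}}^2$ rather than into an extra hyperdissipative term. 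With those estimates and the abstract conversion of \cite{goodair2022stochastic} in hand, the proof of Theorem \ref{Stratotheorem2} reduces to the chain of invocations described above.
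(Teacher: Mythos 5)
Your proposal is correct and follows essentially the same route as the paper: Theorem \ref{Stratotheorem2} is read off as the $m=2$ case of Theorem \ref{Stratotheorem1}, which in turn is the abstract It\^{o}--Stratonovich conversion (Theorem \ref{theorem for ito strat conversion}) applied to the solution of Theorem \ref{2d strong navier hyperdis main} in the quartet $V=D(A^{\frac{2m-1}{2}})$, $H=D(A^{\frac{m}{2}})$, $U=D(A^{\frac{1}{2}})$, $X=L^2_{\sigma}$, with the space identifications of Proposition \ref{prop norm equivalence for W22} and Lemma \ref{lemma for inclusion of high A} translating the statement. Your additional remarks on the corrector alignment via $\mathcal{P}B_i=\mathcal{P}B_i\mathcal{P}$ and on where the genuine analytic work sits (upstream, in the noise estimates absorbing the half-derivative remainder into the viscosity) accurately reflect the paper's structure.
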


As stated Theorem \ref{Stratotheorem2} follows as a particular case of Theorem \ref{Stratotheorem1}, hence it is sufficient to prove Theorem \ref{Stratotheorem1} alone. This follows directly from Theorem \ref{theorem for ito strat conversion} for the spaces
\begin{align*}
    V:= D(A^{\frac{2m-1}{2}}),  \quad H:= D(A^{\frac{m}{2}}), \quad
    U:= D(A^{\frac{1}{2}}), \quad  X:= L^2_{\sigma}.
\end{align*}
The assumptions of Theorem \ref{theorem for ito strat conversion} are immediate in light of Subsection \ref{subs noise estimates}. We conclude the proof here. 

\subsection{The Torus}

For completeness we briefly address the equation (\ref{projected strato Salt}) posed over the $N-$ dimensional torus $\T$ for $N=$ 2 or 3 dimensions, where hyperdissipation is not required. For the functional analytic framework we recall that any function $f \in L^2(\T;\R^N)$ admits the representation \begin{equation} \label{fourier rep}f(x) = \sum_{k \in \mathbb{Z}^N}f_ke^{ik\cdot x}\end{equation} whereby each $f_k \in \mathbb{C}^N$ is such that $f_k = \overbar{f_{-k}}$ and the infinite sum is defined as a limit in $L^2(\T;\R^N)$, see e.g. [\cite{robinson2016three}] Subsection 1.5 for details. In this setting we can make the following definition.

\begin{definition}
We define $L^2_{\sigma}$ as the subset of $L^2(\T;\R^N)$ of zero-mean functions $f$ whereby for all $k \in \mathbbm{Z}^N$, $k \cdot f_k = 0$ with $f_k$ as in (\ref{fourier rep}). For general $m \in \N$ we introduce $W^{m,2}_{\sigma}$ as the intersection of $W^{m,2}(\T;\R^N)$ respectively with $L^2_{\sigma}$.
\end{definition}

Note that the dimensionality $N$ is not explicitly included in the spaces, but will be made clear from context. In either case the spaces $D(A^s)$ can be introduced exactly as in Subsection \ref{subs frac}, with characterisation $D(A^{\frac{m}{2}}) = W^{m,2}_{\sigma}$. Further details can be found in [\cite{robinson2016three}] Exercises 2.12, 2.13 and the discussion in Subsection 2.3. The higher order regularity of solutions via an iterated application of Theorem \ref{theorem1} was shown in [\cite{goodair2024improved}] Theorem 4.3, for a Lipschitz noise. With the noise estimates of Subsection \ref{subs noise estimates}, clearly holding on the torus as well, it is straightforwards to apply the same procedure for the SALT noise considered in this paper. Combined with the It\^{o}-Stratonovich conversion as in Theorem \ref{Stratotheorem2}, we have the following.

\begin{proposition} \label{navier strong existence}
     For $m \geq 2$ let $u_0: \Omega \rightarrow W^{m,2}_{\sigma}$ be $\mathcal{F}_0-$measurable and $(u,\tau)$ be a local strong solution of the equation \begin{equation} \nonumber
    u_t = u_0 - \int_0^t\mathcal{P}\mathcal{L}_{u_s}u_s\ ds - \nu\int_0^t A u_s\, ds + \frac{1}{2}\int_0^t\sum_{i=1}^\infty \mathcal{P}B_i^2u_s ds - \int_0^t \mathcal{P}Bu_s d\mathcal{W}_s. 
\end{equation}
     Then $u_{\cdot}\mathbbm{1}_{\cdot \leq \tau}$ is progressively measurable in $W^{m+1,2}_{\sigma}$ and such that for $\mathbbm{P}-a.e.$ $\omega$, $u_{\cdot}(\omega) \in C\left([0,T];W^{m,2}_{\sigma}\right)$ and $u_{\cdot}(\omega)\mathbbm{1}_{\cdot \leq \tau(\omega)} \in L^2\left([0,T];W^{m+1,2}_{\sigma}\right)$. Moreover $u$ satisfies
     \begin{equation} \nonumber
    u_t = u_0 - \int_0^{t \wedge \tau}\mathcal{P}\mathcal{L}_{u_s}u_s\ ds - \nu\int_0^{t \wedge \tau} A u_s\, ds  - \int_0^{t \wedge \tau} \mathcal{P}Bu_s \circ d\mathcal{W}_s 
\end{equation}
$\mathbbm{P}-a.s.$ in $L^2_{\sigma}$ for all $t \geq 0$. 
If $N=2$ then one can choose $\tau := T$.
\end{proposition}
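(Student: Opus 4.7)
The plan is to apply Theorem \ref{theorem1} iteratively along the Sobolev scales $W^{k,2}_\sigma$ for $k = 2, \dots, m$, in the manner of [\cite{goodair2024improved}] Theorem 4.3. At the $k$-th iteration we invoke Theorem \ref{theorem1} with the triple
$$\mathscr{V} := W^{k+1,2}_\sigma, \qquad \mathscr{H} := W^{k,2}_\sigma, \qquad \mathscr{U} := W^{k-1,2}_\sigma,$$
using the exponential basis of $L^2_\sigma$ for the Galerkin truncations $\bar{\mathcal{P}}_n$; this basis diagonalises the Stokes Operator, and the torus analogues of Lemma \ref{lemma for passing stokes powers} and Proposition \ref{big boy} follow directly. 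By uniqueness of maximal solutions together with the a priori finiteness of the control norm at level $k-1$ on $[0,\tau]$ provided by the previous iteration, the blow-up time returned by Theorem \ref{theorem1} at level $k$ is identified with the original $\tau$.

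To verify the assumptions of Theorem \ref{theorem1}, observe that the Stokes term yields the dissipation $\inner{A\phi}{\phi}_{A^{k/2}} = \norm{\phi}_{A^{(k+1)/2}}^2$ since $\mathcal{P}$ commutes with derivatives on the torus. The nonlinear term is handled via the classical Sobolev-Gagliardo-Nirenberg estimate
$$\left\vert \inner{\mathcal{P}\mathcal{L}_\phi \phi}{\phi}_{A^{k/2}}\right\vert \leq c\norm{\phi}_{A^{k/2}}^{a}\norm{\phi}_{A^{(k+1)/2}}^{b}$$
with exponents satisfying $b<2$ so that Young's inequality produces a term of the form $c\norm{\phi}_{A^{k/2}}^\gamma + \delta \norm{\phi}_{A^{(k+1)/2}}^2$ to be absorbed into the dissipation. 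The key ingredient is the noise control supplied by Propositions \ref{prop high order estimates 1} and \ref{high order estimates 2}, whose proofs rest solely on the commutator bound (\ref{commutator bound higher order}), the identity $\mathcal{P}B_i = \mathcal{P}B_i\mathcal{P}$, and the adjoint structure of $B_i$; none of these ingredients invoke the boundary, so they transfer verbatim to the toroidal setting. Choosing $\varepsilon$ proportional to $\nu(\sum_i \norm{\xi_i}_{W^{k+2,\infty}}^2)^{-1}$ in Proposition \ref{prop high order estimates 1} absorbs the half-derivative loss into the viscosity.

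Once the pathwise regularity $C([0,\tau]; W^{m,2}_\sigma) \cap L^2([0,\tau]; W^{m+1,2}_\sigma)$ has been secured, the Itô-Stratonovich conversion follows directly from Theorem \ref{theorem for ito strat conversion} with $V = W^{m+1,2}_\sigma$, $H = W^{m,2}_\sigma$, $U = W^{m-1,2}_\sigma$, $X = L^2_\sigma$, mirroring the proof of Theorem \ref{Stratotheorem1}; the required hypotheses follow once more from Subsection \ref{subs noise estimates} together with the equivalence $\mathcal{P}B_i^2 = (\mathcal{P}B_i)^2$. For the final assertion with $N=2$, global existence of strong solutions in $W^{1,2}_\sigma$ for the 2D stochastic Navier-Stokes equation with SALT noise is well established (obtainable via Itô's formula applied to $\norm{u}_{A^{1/2}}^2$, exploiting the two-dimensional cancellation $\inner{\mathcal{P}\mathcal{L}_u u}{Au} = 0$ together with the bound (\ref{combinedterminenergyinequality})), and the resulting a priori bound $\sup_{t \in [0,T]} \norm{u_t}_{W^{1,2}_\sigma} < \infty$ $\mathbbm{P}$-a.s. combined with the blow-up alternative of Theorem \ref{theorem1} forces $\tau = T$ at every level of the bootstrapping. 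The main obstacle is the careful identification of blow-up times across successive regularity levels, resolved by uniqueness of maximal solutions and the cascading finiteness of the control norms.
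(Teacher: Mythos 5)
Your proposal is correct and follows essentially the same route as the paper: an iterated application of Theorem \ref{theorem1} along the Sobolev scale in the manner of [\cite{goodair2024improved}] Theorem 4.3, fed by the noise estimates of Propositions \ref{prop high order estimates 1} and \ref{high order estimates 2} (which indeed transfer to the torus, where the absence of a boundary even simplifies the integration by parts in the odd case), with blow-up times identified across levels by uniqueness and the cascading finiteness of the control norms, and concluded by the It\^{o}--Stratonovich conversion of Theorem \ref{theorem for ito strat conversion}. The paper only sketches this argument by reference; your write-up fills in the same steps in more detail, including the correct use of the 2D identity $\inner{\mathcal{P}\mathcal{L}_u u}{Au}=0$ to force $\tau = T$ when $N=2$.
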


We stress that the difference in this setting compared to the Navier boundary is that $\mathcal{P}\mathcal{L}: D(A^{\frac{m+1}{2}}) \rightarrow D(A^{\frac{m}{2}})$ for $m \geq 2$. At least for the Navier boundary we have that $\mathcal{P}\mathcal{L}: D(A) \rightarrow D(A^{\frac{1}{2}})$, which is untrue for the corresponding spaces of the no-slip condition, and represents the reason that the additional degree of regularity obtained in Theorem \ref{Stratotheorem2} is possible for this boundary condition but not for the no-slip condition.

\section{Appendix I: Weak and Strong Solutions to Nonlinear SPDEs} \label{section appendix I}

\subsection{Functional Framework}\label{subs functional framework}

This appendix is concerned with a variational framework for an abstract It\^{o} SPDE  \begin{equation} \label{thespde} \sy_t = \sy_0 + \int_0^t \mathcal{A}(s,\sy_s)ds + \int_0^t\mathcal{G} (s,\sy_s) d\mathcal{W}_s\end{equation}
which we pose for a triplet of embedded separable Hilbert Spaces $$V \hookrightarrow H \hookrightarrow U$$ whereby the embeddings are continuous linear injections, and $H \hookrightarrow U$ is compact. The equation (\ref{thespde}) is posed on a time interval $[0,T]$ for arbitrary but henceforth fixed $T \geq 0$. The mappings $\mathcal{A},\mathcal{G}$ are such that
    $\mathcal{A}:[0,T] \times V \rightarrow U,
    \mathcal{G}:[0,T] \times H \rightarrow \mathscr{L}^2(\mathfrak{U};U)$ are measurable. Understanding $\mathcal{G}$ as a mapping $\mathcal{G}: [0,T] \times H \times \mathfrak{U} \rightarrow U$, we introduce the notation $\mathcal{G}_i(\cdot,\cdot):= \mathcal{G}(\cdot,\cdot,e_i)$. We further impose the existence of a system of elements $(a_k)$ of $V$ which form an orthogonal basis of $U$ and a basis of $H$. Let us define the spaces $V_n:= \textnormal{span}\left\{a_1, \dots, a_n \right\}$ and $\mathcal{P}_n$ as the orthogonal projection to $V_n$ in $U$, that is $$\mathcal{P}_n:f \mapsto \sum_{k=1}^n\inner{f}{a_k}_Ua_k.$$
    It is required that the $(\mathcal{P}_n)$ are uniformly bounded in $H$, which is to say that there exists a constant $c$ independent of $n$ such that for all $f \in H$, \begin{equation} \label{uniform bounds of projection}
        \norm{\mathcal{P}_nf}_H \leq c\norm{f}_H.
    \end{equation}
    Moreover, our setup can be expanded by considering the induced Gelfand Triple $$H \xhookrightarrow{} U \xhookrightarrow{} H^*$$
defined relative to the inclusion mapping $i: H \rightarrow U$; indeed, the embedding of $U$ into $H^*$ is given by the composition of the isomorphism mapping $U$ into $U^*$ with the adjoint $i^*: U^* \rightarrow H^*$. In particular, the duality pairing between $H$ and $H^*$, $\inner{\cdot}{\cdot}_{H^* \times H}$, is compatible with $\inner{\cdot}{\cdot}_U$ in the sense that for for any $f \in U$, $g \in H$,
$$\inner{f}{g}_{H^* \times H} = \inner{f}{g}_U.$$
We assume that $\mathcal{A}:[0,T] \times H \rightarrow H^*$ is measurable. Specific bounds on the mappings $\mathcal{A}$ and $\mathcal{G}$ will be imposed in Assumption Sets 1, 2 and 3. We shall let $c_{\cdot}:[0,T]\rightarrow \R$ denote any bounded function, and for any constant $p \in \R$ we define the functions $K_U: U \rightarrow \R$, $K_H: H \rightarrow \R$, $K_V: V \rightarrow \R$ by
\begin{equation} \nonumber
    K_U(\phi)= 1 + \norm{\phi}_U^p, \quad K_H(\phi)= 1 + \norm{\phi}_H^p, \quad K_V(\phi)= 1 + \norm{\phi}_V^p.
\end{equation}
We may also consider these mappings as functions of two variables, e.g. $K_U: U \times U \rightarrow \R$ by $$K_U(\phi,\psi) = 1 + \norm{\phi}_U^p + \norm{\psi}_U^p.$$ Our assumptions will be stated for `the existence of a $K$ such that...' where we really mean `the existence of a $p$ such that, for the corresponding $K$, ...'.

\subsection{Assumption Set 1} \label{subby assumption}

 Recall the setup and notation of Subsection \ref{subs functional framework}. We assume that there exists a $c_{\cdot}$, $K$ and $\gamma > 0$ such that for all $\phi,\psi \in V$, $f \in H$ and $t \in [0,T]$:
 
 
  \begin{assumption} \label{new assumption 1} \begin{align}
     \label{111} \norm{\mathcal{A}(t,f)}_{H^*} +\sum_{i=1}^\infty \norm{\mathcal{G}_i(t,f)}^2_U &\leq c_t K_U(f)\left[1 + \norm{f}_H^2\right],\\ \label{222}
     \norm{\mathcal{A}(t,\phi) - \mathcal{A}(t,\psi)}_U^2 &\leq  c_tK_V\norm{\phi-\psi}_V^2,\\ \label{333}
    \sum_{i=1}^\infty \norm{\mathcal{G}_i(t,\phi) - \mathcal{G}_i(t,\psi)}_U^2 &\leq c_tK_V(\phi,\psi)\norm{\phi-\psi}_H^2.
 \end{align}
 \end{assumption}

\begin{assumption} \label{first assumption for uniform bounds}
    \begin{align}
   \label{uniformboundsassumpt1actual}  2\inner{\mathcal{A}(t,\phi)}{\phi}_U + \sum_{i=1}^\infty\norm{\mathcal{G}_i(t,\phi)}_U^2 &\leq c_t\left[1 + \norm{\phi}_U^2\right] - \gamma\norm{\phi}_H^2,\\  \label{uniformboundsassumpt2actual}
    \sum_{i=1}^\infty \inner{\mathcal{G}_i(t,\phi)}{\phi}^2_U &\leq c_t\left[1 + \norm{\phi}_U^4\right].
\end{align}
\end{assumption}

\begin{assumption}\footnote{In fact in (\ref{tightnessassumpt1}), the exponent $3/2$ could be replaced by any $q < 2$.}\label{tightness assumptions}
    \begin{align}
   \label{tightnessassumpt1}  \inner{\mathcal{A}(t,\phi)}{f}_U  &\leq c_t\left[K_U(\phi) + \norm{\phi}_H^{\frac{3}{2}} \right]\left[K_U(f) + \norm{f}_H^{\frac{3}{2}} \right],\\  \label{tightnessassumpt2}
    \sum_{i=1}^\infty \inner{\mathcal{G}_i(t,\phi)}{f}^2_U &\leq c_tK_U(\phi)K_H(f).
\end{align}
\end{assumption}

\begin{assumption}\label{limity assumptions}
    \begin{align}
   \label{limityassumpt1}  \inner{\mathcal{A}(t,\phi) - A(t,f)}{\psi}_{H^* \times H}  &\leq c_tK_V(\psi)\left[1 + \norm{\phi}_H + \norm{f}_H \right]\norm{\phi - f}_U,\\  \label{limityassumpt2}
    \sum_{i=1}^\infty \inner{\mathcal{G}_i(t,\phi) - \mathcal{G}_i(t,f)}{\psi}^2_U &\leq c_tK_V(\psi)\norm{\phi - f}_U^2.
\end{align}
\end{assumption}

\subsection{Assumption Set 2} \label{assumption set 2}

 Recall the setup and notation of Subsection \ref{subs functional framework}. We assume that there exists a $c_{\cdot}$, $K$ and $\gamma > 0$ such that for all $f,g \in H$ and $t \in [0,T]$:

  \begin{assumption} \label{new assumptionzizzle 1} \begin{align}
     \label{111zizzle} \norm{\mathcal{A}(t,f)}_{H^*}^2  &\leq c_t K_U(f)\left[1 + \norm{f}_H^2\right].
 \end{align}
 \end{assumption}

\begin{assumption} \label{therealcauchy assumptions}
\begin{align}
  \nonumber 2\inner{\mathcal{A}(t,f) - \mathcal{A}(t,g)}{f - g}_{H^* \times H} &+ \sum_{i=1}^\infty\norm{\mathcal{G}_i(t,f) - \mathcal{G}_i(t,g)}_U^2\\ \label{therealcauchy1} &\leq  c_{t}K_U(f,g)\left[1 + \norm{f}_H^2 + \norm{g}_H^2\right]\norm{f-g}_U^2 - \gamma\norm{f-g}_H^2,\\ \label{therealcauchy2}
    \sum_{i=1}^\infty \inner{\mathcal{G}_i(t,f) - \mathcal{G}_i(t,g)}{f-g}^2_U & \leq c_{t} K_U(f,g)\left[1 + \norm{f}_H^2 + \norm{g}_H^2\right] \norm{f-g}_U^4.
\end{align}
\end{assumption}

\subsection{Assumption Set 3} \label{assumption set 3}

Recall the setup and notation of Subsection \ref{subs functional framework}. We now impose the existence of a new Banach Space $\bar{H}$ which is an extension of $H$, or precisely, $H \subseteq \bar{H} \subseteq U$ and for every $f \in \bar{H}$, $\norm{f}_{\bar{H}} = \norm{f}_H.$ In addition, $\mathcal{G}:[0,T] \times V \rightarrow \mathscr{L}^2\left(\mathfrak{U};\bar{H}\right)$ is assumed measurable. We also suppose that there exists a real valued sequence $(\mu_n)$ with $\mu_n \rightarrow \infty$ such that for any $f \in \bar{H}$, \begin{align}
     \label{mu2}
    \norm{(I - \mathcal{P}_n)f}_U \leq \frac{1}{\mu_n}\norm{f}_{\bar{H}}
\end{align}
where $I$ represents the identity operator in $U$. Furthermore we assume that there exists a $\gamma > 0$ such that for any $\varepsilon > 0$, there exists a $c_\cdot$, $K$ (dependent on $\varepsilon$) such that for any $\phi \in V$, $\phi^n \in V_n$ and $t \in [0,T]$:

\begin{assumption} \label{newfacilitator}
\begin{align}
 \label{111fac} \norm{\mathcal{A}(t,\phi)}_{U}^2 + \sum_{i=1}^\infty\norm{\mathcal{G}_i(t,\phi)}_{\bar{H}}^2 \leq c_t K_U(\phi)\left[1 + \norm{\phi}_H^4 + \norm{\phi}_V^2\right]
    \end{align}
\end{assumption}

\begin{assumption} \label{assumptions for uniform bounds2}
 \begin{align}
   \label{uniformboundsassumpt1}  2\inner{\mathcal{P}_n\mathcal{A}(t,\phi^n)}{\phi^n}_H + \sum_{i=1}^\infty\norm{\mathcal{P}_n\mathcal{G}_i(t,\phi^n)}_H^2 &\leq c_tK_U(\phi^n)\left[1 + \norm{\phi^n}_H^4\right] - \gamma\norm{\phi^n}_V^2,\\  \label{uniformboundsassumpt2}
    \sum_{i=1}^\infty \inner{\mathcal{P}_n\mathcal{G}_i(t,\phi^n)}{\phi^n}^2_H &\leq c_tK_U(\phi^n)\left[1 + \norm{\phi^n}_H^6\right] + \varepsilon \norm{\phi^n}_V^2.
\end{align}
\end{assumption}

\subsection{Martingale Weak Solutions} \label{subby marty weak def and res}

We now state the definition and main result for martingale weak solutions.

\begin{definition} \label{definitionofspacetimeweakmartingale}
Let $\sy_0: \Omega \rightarrow U$ be $\mathcal{F}_0-$measurable. If there exists a filtered probability space $\left(\tilde{\Omega},\tilde{\mathcal{F}},(\tilde{\mathcal{F}}_t), \tilde{\mathbbm{P}}\right)$, a Cylindrical Brownian Motion $\tilde{\mathcal{W}}$ over $\mathfrak{U}$ with respect to $\left(\tilde{\Omega},\tilde{\mathcal{F}},(\tilde{\mathcal{F}}_t), \tilde{\mathbbm{P}}\right)$, an $\mathcal{F}_0-$measurable $\tilde{\sy}_0: \tilde{\Omega} \rightarrow U$ with the same law as $\sy_0$, and a progressively measurable process $\tilde{\sy}$ in $H$ such that for $\tilde{\mathbbm{P}}-a.e.$ $\tilde{\omega}$, $\tilde{\sy}_{\cdot}(\omega) \in C_w\left([0,T];U\right) \cap L^2\left([0,T];H\right)$\footnote{Note that $C_w\left([0,T];U\right) \subseteq L^{\infty}\left([0,T];U\right)$.} and
\begin{align} 
       \tilde{\sy}_t = \tilde{\sy}_0 + \int_0^t \mathcal{A}(s,\tilde{\sy}_s)ds + \int_0^t\mathcal{G} (s,\tilde{\sy}_s) d\mathcal{W}_s \label{newid1}
\end{align}
holds $\tilde{\mathbbm{P}}-a.s.$ in $H^*$ for all $t \in [0,T]$, then $\tilde{\sy}$ is said to be a martingale weak solution of the equation (\ref{thespde}).
\end{definition}


\begin{theorem} \label{theorem for martingale weak existence}
    Let Assumption Set 1 hold. For any given $\mathcal{F}_0-$measurable $\sy_0 \in L^\infty\left(\Omega;U\right)$, there exists a martingale weak solution of the equation (\ref{thespde}). 
\end{theorem}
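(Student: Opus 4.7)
The plan is to construct a martingale weak solution via a Galerkin approximation combined with a Skorokhod representation argument, exploiting the compact embedding $H \hookrightarrow U$. The roles of Assumption Set 1 divide naturally: Assumption \ref{new assumption 1} provides local well-posedness of the Galerkin SDEs, Assumption \ref{first assumption for uniform bounds} supplies uniform energy estimates, Assumption \ref{tightness assumptions} delivers tightness in a path space reflecting Definition \ref{definitionofspacetimeweakmartingale}, and Assumption \ref{limity assumptions} permits the passage to the limit in the nonlinear terms when tested against basis elements.

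First I would study the finite-dimensional Galerkin system on $V_n$,
\begin{equation}
\sy^n_t = \mathcal{P}_n\sy_0 + \int_0^t \mathcal{P}_n\mathcal{A}(s,\sy^n_s)\,ds + \int_0^t \mathcal{P}_n\mathcal{G}(s,\sy^n_s)\,d\mathcal{W}_s.
\end{equation}
The local Lipschitz bounds (\ref{222})--(\ref{333}) give existence and uniqueness of a local strong solution by classical finite-dimensional SDE theory. Applying It\^{o}'s formula to $\norm{\sy^n}_U^{2p}$ for arbitrary $p \geq 1$ together with (\ref{uniformboundsassumpt1actual}), and absorbing the stochastic integral via the Burkholder--Davis--Gundy inequality combined with (\ref{uniformboundsassumpt2actual}), yields the uniform bounds
\begin{equation}
\mathbbm{E}\sup_{t\in[0,T]}\norm{\sy^n_t}_U^{2p} + \mathbbm{E}\int_0^T \norm{\sy^n_s}_H^2\,ds \leq C,
\end{equation}
ruling out blow-up and producing global solutions; the hypothesis $\sy_0 \in L^\infty(\Omega;U)$ is exactly what enables these higher moments.

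The second step is to prove tightness of the laws of $(\sy^n)$ in the path space $C_w([0,T];U) \cap L^2_w([0,T];H)$. The bound (\ref{tightnessassumpt1}) gives equicontinuity of the drift integrals as $U$-valued processes in a fractional H\"{o}lder sense, while (\ref{tightnessassumpt2}) together with a BDG estimate does the same for the stochastic integral; combined with the uniform $L^2_t H$ bound and the compactness of $H \hookrightarrow U$, this suffices via an Aubin--Lions type compactness criterion adapted to the non-metrisable weak topologies. A Jakubowski--Skorokhod representation then produces, on a new filtered probability space, processes $\tilde{\sy}^n$ with the same laws as $\sy^n$, together with a cylindrical Brownian motion $\tilde{\mathcal{W}}$ and an initial datum $\tilde{\sy}_0$ equal in law to $\sy_0$, such that the Galerkin identity holds on the new space and $\tilde{\sy}^n \to \tilde{\sy}$ almost surely in that path space.

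The main obstacle, as is typical, is the identification of $\tilde{\sy}$ as a solution, since the continuity of $\mathcal{A}$ and $\mathcal{G}$ is only that of Assumption \ref{limity assumptions}. The remedy is to test the limit identity against an arbitrary basis element $a_k \in V$: the estimates (\ref{limityassumpt1}) and (\ref{limityassumpt2}) applied with $\psi = a_k$, combined with the almost sure convergence $\tilde{\sy}^n \to \tilde{\sy}$ (weakly in $U$ pointwise in time and in $L^2([0,T];U)$ after using the $L^2_t H$ bound and compact embedding), deliver convergence of the drift integrals in $L^1(\tilde{\Omega})$ by Vitali's theorem fed by the uniform moments, and convergence of the stochastic integrals in probability by the classical lemma on convergence of stochastic integrals. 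The density of $(a_k)$ in $H$ then upgrades the identity to hold in $H^*$ for $\tilde{\mathbbm{P}}$-a.e. $\tilde{\omega}$ and all $t \in [0,T]$, while the weak continuity in $U$ and the $L^2_t H$ regularity of $\tilde{\sy}$ are inherited from the topology of convergence, completing the verification of Definition \ref{definitionofspacetimeweakmartingale}.
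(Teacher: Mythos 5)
The paper does not prove this theorem itself but defers entirely to [\cite{goodair2024weak}] Theorem 2.7, and your outline reconstructs precisely the argument that framework is built for: Galerkin well-posedness from the local Lipschitz bounds, uniform moments from the coercivity and the $L^\infty(\Omega;U)$ initial datum, tightness in $C_w([0,T];U)\cap L^2([0,T];U)$ from Assumption \ref{tightness assumptions} with the compact embedding, Jakubowski--Skorokhod representation, and identification of the limit via Assumption \ref{limity assumptions} tested against basis elements. Your assignment of roles to the four assumptions matches their design and naming, so the proposal is correct and takes essentially the same approach as the cited proof.
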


\begin{proof}
    See [\cite{goodair2024weak}] Theorem 2.7.
\end{proof}


\subsection{Weak Solutions} \label{subbie weak def and res}

We now state the definitions and main result for weak solutions.

\begin{definition} \label{definitionofweak}
Let $\sy_0: \Omega \rightarrow U$ be $\mathcal{F}_0-$measurable. A process $\sy$ which is progressively measurable in $H$ and such that for $\mathbbm{P}-a.e.$ $\omega$, $\sy_{\cdot}(\omega) \in C\left([0,T];U\right) \cap L^2\left([0,T];H\right)$, is said to be a weak solution of the equation (\ref{thespde}) if the identity (\ref{thespde}) holds $\mathbbm{P}-a.s.$ in $H^*$ for all $t\in[0,T]$.
\end{definition}

\begin{definition} \label{definitionunique}
    A weak solution $\sy$ of the equation (\ref{thespde}) is said to be the unique solution if for any other such solution $\py$, $$ \mathbbm{P}\left(\left\{\omega \in \Omega: \sy_t(\omega) = \py_t(\omega) \quad \forall t \geq 0\right\}\right) = 1.$$
\end{definition}

\begin{theorem} \label{theorem for weak existence}
    Let Assumption Sets 1 and 2 hold. For any given $\mathcal{F}_0-$measurable $\sy_0: \Omega \rightarrow U$, there exists a unique weak solution of the equation (\ref{thespde}). 
\end{theorem}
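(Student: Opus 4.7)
The plan is to combine the martingale weak existence supplied by Theorem \ref{theorem for martingale weak existence} with a pathwise uniqueness result derived from Assumption Set 2, then invoke an (infinite dimensional) Yamada-Watanabe principle to upgrade martingale weak existence to weak existence on the original probability space, and finally strip the $L^\infty$ restriction on the initial datum by partitioning $\Omega$. I first verify that Assumption Set 1 holds so that Theorem \ref{theorem for martingale weak existence} is applicable whenever $\sy_0 \in L^\infty(\Omega;U)$, yielding a martingale weak solution in the sense of Definition \ref{definitionofspacetimeweakmartingale}.

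For pathwise uniqueness, suppose $\sy$ and $\py$ are two weak solutions relative to the same probability space, cylindrical Brownian motion and initial datum. Applying the It\^{o} formula in the Gelfand triple $H \hookrightarrow U \hookrightarrow H^*$ to $\norm{\sy_t - \py_t}_U^2$ (justified via (\ref{111zizzle}) and the regularity $\sy,\py \in C([0,T];U)\cap L^2([0,T];H)$) gives
\[
\norm{\sy_t - \py_t}_U^2 = 2\int_0^t \inner{\mathcal{A}(s,\sy_s) - \mathcal{A}(s,\py_s)}{\sy_s - \py_s}_{H^* \times H} ds + \sum_i\int_0^t \norm{\mathcal{G}_i(s,\sy_s) - \mathcal{G}_i(s,\py_s)}_U^2 ds + M_t,
\]
where $M_t$ is the local martingale arising from the stochastic integral. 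I would introduce the localising stopping times $\tau_R := \inf\{t : \norm{\sy_t}_U + \norm{\py_t}_U > R\} \wedge \inf\{t : \int_0^t(\norm{\sy_s}_H^2 + \norm{\py_s}_H^2)ds > R\} \wedge T$, which satisfy $\tau_R \to T$ almost surely. On $[0,\tau_R]$, estimate (\ref{therealcauchy1}) controls the drift plus quadratic variation by $C_R(1 + \norm{\sy_s}_H^2 + \norm{\py_s}_H^2)\norm{\sy_s - \py_s}_U^2 - \gamma\norm{\sy_s - \py_s}_H^2$, while (\ref{therealcauchy2}) combined with the Burkholder-Davis-Gundy inequality absorbs the supremum of $M$ into the same quantity plus a Gronwall-integrable multiplier. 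A stochastic Gronwall argument then forces $\sy \equiv \py$ on $[0,\tau_R]$, and taking $R\to\infty$ yields pathwise uniqueness in the sense of Definition \ref{definitionunique}.

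With martingale weak existence and pathwise uniqueness in hand for bounded initial data, the classical infinite dimensional Yamada-Watanabe theorem (in the Kurtz formulation, or equivalently the Gy\"ongy-Krylov characterisation of convergence in probability applied to a pair of martingale solutions on a common extension) produces a weak solution on the original filtered probability space $(\Omega,\mathcal{F},(\mathcal{F}_t),\mathbbm{P})$ driven by $\mathcal{W}$, with $\sy_0$ as its initial condition. To remove the $L^\infty$ hypothesis, decompose $\Omega$ into the $\mathcal{F}_0$-measurable sets $\Omega_k := \{\omega : \norm{\sy_0(\omega)}_U \in [k-1,k)\}$ for $k \in \N$. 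Each $\sy_0\mathbbm{1}_{\Omega_k}$ belongs to $L^\infty(\Omega;U)$, so the previous step delivers a weak solution $\sy^k$. Pathwise uniqueness applied to the trivial solution on $\Omega_k^c$ (where the initial datum vanishes, noting $\mathcal{A}(\cdot,0)=0$ or absorbing any constant via the growth bounds) ensures that $\sy^k = 0$ on $\Omega_k^c$, so $\sy := \sum_k \sy^k$ is well-defined and inherits the required regularity and the identity (\ref{thespde}) $\mathbbm{P}$-a.s. Uniqueness transfers by the same localisation.

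The main obstacle is the pathwise uniqueness step: one must propagate the local monotonicity (\ref{therealcauchy1}) through a stochastic Gronwall lemma in the presence of the $K_U(\sy,\py)[1 + \norm{\sy}_H^2 + \norm{\py}_H^2]$ factor, which is only path-locally bounded, and must use (\ref{therealcauchy2}) in a form compatible with BDG applied to the supremum norm. The subsequent Yamada-Watanabe upgrade and the $\Omega$-decomposition are routine once uniqueness is secured, though verifying that $\mathcal{A}(\cdot,0)$ and $\mathcal{G}(\cdot,0)$ behave well enough at zero to allow the trivial extension in Paragraph 3 requires a brief check via (\ref{111zizzle}) and (\ref{111}).
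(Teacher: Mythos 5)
The paper offers no internal argument for this theorem: its ``proof'' is a citation to [\cite{goodair2024weak}] Theorem 3.5. Your proposal is therefore the only substantive proof on the table, and the route you take --- martingale weak existence from Theorem \ref{theorem for martingale weak existence}, pathwise uniqueness from Assumption \ref{therealcauchy assumptions} via the Gelfand-triple It\^{o} formula and a localised stochastic Gr\"{o}nwall argument, an infinite-dimensional Yamada--Watanabe upgrade, and finally truncation of the initial datum --- is the canonical one for this class of variational SPDEs and is, in all likelihood, essentially the argument of the cited reference. The uniqueness step is correctly set up: the stopping times you choose make the multiplier $c_sK_U(\sy_s,\py_s)[1+\norm{\sy_s}_H^2+\norm{\py_s}_H^2]$ integrable up to $\tau_R$, and (\ref{therealcauchy2}) with Burkholder--Davis--Gundy absorbs the martingale supremum in the standard way.

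Two points need repair. First, your patching $\sy := \sum_k \sy^k$ relies on $\sy^k \equiv 0$ on $\Omega_k^c$, which would require $\mathcal{A}(\cdot,0)=0$ and $\mathcal{G}(\cdot,0)=0$; neither is implied by the assumptions ((\ref{111}) and (\ref{111zizzle}) only give $\norm{\mathcal{A}(t,0)}_{H^*}\leq c_t$, so the zero process need not solve the equation with zero initial datum), and ``absorbing any constant via the growth bounds'' does not rescue this. The correct construction is $\sy := \sum_k \sy^k\mathbbm{1}_{\Omega_k}$ together with the observation that pathwise uniqueness localises to $\mathcal{F}_0$-measurable sets: if two solutions have initial data agreeing a.s.\ on $\Omega_k$, the same Gr\"{o}nwall estimate applied to $\norm{\sy_t-\py_t}_U^2\mathbbm{1}_{\Omega_k}$ shows they are indistinguishable on $\Omega_k$, which is what makes the glued process consistent and a solution. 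Second, Definition \ref{definitionofspacetimeweakmartingale} only provides paths in $C_w([0,T];U)$, whereas Definition \ref{definitionofweak} demands strong continuity in $U$; you should record that the same Gelfand-triple It\^{o} formula yields continuity of $t\mapsto\norm{\sy_t}_U^2$, which combined with weak continuity gives $\sy_{\cdot}(\omega)\in C([0,T];U)$. With these two adjustments the argument closes.
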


\begin{proof}
    See [\cite{goodair2024weak}] Theorem 3.5.
\end{proof}


\subsection{Strong Solutions} \label{def and results strong sols}

We now state the definitions and main result for strong solutions.

\begin{definition} \label{definitionofstrong}
Let $\sy_0: \Omega \rightarrow H$ be $\mathcal{F}_0-$measurable. A process $\sy$ which is progressively measurable in $V$ and such that for $\mathbbm{P}-a.e.$ $\omega$, $\sy_{\cdot}(\omega) \in L^{\infty}\left([0,T];H\right) \cap L^2\left([0,T];V\right)$, is said to be a strong solution of the equation (\ref{thespde}) if the identity (\ref{thespde}) holds $\mathbbm{P}-a.s.$ in $U$ for all $t\in[0,T]$.
\end{definition}

Note that a strong solution necessarily has continuous paths in $U$, from the evolution equation satisfied in this space.

\begin{definition} \label{definitionunique2}
    A strong solution $\sy$ of the equation (\ref{thespde}) is said to be unique if for any other such solution $\py$, $$ \mathbbm{P}\left(\left\{\omega \in \Omega: \sy_t(\omega) = \py_t(\omega) \quad \forall t \geq 0\right\}\right) = 1.$$
\end{definition}

\begin{theorem}\label{theorem for strong existence}
    Let Assumption Sets 1, 2 and 3 hold. For any given $\mathcal{F}_0-$measurable $\sy_0:\Omega \rightarrow H$, there exists a unique strong solution of the equation (\ref{thespde}). 
\end{theorem}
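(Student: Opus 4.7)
The proof should proceed by a Galerkin approximation scheme, using Assumption Set 3 to upgrade the regularity of the weak solution provided by Theorem \ref{theorem for weak existence}. By the standard cutoff $\sy_0^R := \sy_0\mathbbm{1}_{\|\sy_0\|_H \leq R}$ and pathwise uniqueness (Assumption Set 2), it suffices to treat the case $\sy_0 \in L^\infty(\Omega; H)$. Setting $\sy_0^n := \mathcal{P}_n\sy_0$, the plan is first to solve the finite-dimensional projected SDE on $V_n$,
$$d\sy_t^n = \mathcal{P}_n\mathcal{A}(t,\sy_t^n)\,dt + \mathcal{P}_n\mathcal{G}(t,\sy_t^n)\,d\mathcal{W}_t,$$
whose existence up to an explosion time follows from the local Lipschitz bounds of Assumption \ref{new assumption 1} (all norms on the finite-dimensional $V_n$ being equivalent).

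The main step will be to derive a priori bounds uniform in $n$, at the level of $L^\infty([0,T];H)\cap L^2([0,T];V)$. Applying It\^{o}'s formula to $\|\sy^n\|_H^2$, Assumption \ref{assumptions for uniform bounds2} yields
$$\|\sy_t^n\|_H^2 + \gamma\int_0^t\|\sy_s^n\|_V^2\,ds \leq \|\sy_0^n\|_H^2 + \int_0^t c_s K_U(\sy_s^n)\bigl[1+\|\sy_s^n\|_H^4\bigr]ds + M_t^n,$$
where the Burkholder--Davis--Gundy inequality applied to the martingale term, combined with (\ref{uniformboundsassumpt2}), produces a further contribution of the form $\varepsilon\int_0^T \|\sy^n_s\|_V^2 ds$ that is absorbed by the viscous term on the left. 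The $U$-valued control already afforded by Theorem \ref{theorem for weak existence} then bounds $K_U(\sy^n)$ in expectation, and a stopping-time Gronwall argument delivers constants independent of $n$ both for $\mathbb{E}\sup_{t \leq T}\|\sy_t^n\|_H^2$ and $\mathbb{E}\int_0^T\|\sy_s^n\|_V^2 ds$, confirming in particular that the explosion times are almost surely equal to $T$.

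Next I would pass to the limit. Extracting a subsequence converging weakly in $L^2(\Omega\times[0,T];V)$ and weak-$*$ in $L^2(\Omega;L^\infty([0,T];H))$ yields a candidate $\tilde{\sy}$; the tail bound (\ref{mu2}) together with the coercivity (\ref{111fac}) ensures that $\mathcal{P}_n\mathcal{G}(t,\cdot)\to\mathcal{G}(t,\cdot)$ in $U$ uniformly on $V$-bounded sets, with the analogous statement for $\mathcal{A}$, so that one can pass to the limit in the Galerkin equation to exhibit $\tilde{\sy}$ as a weak solution of (\ref{thespde}) in the sense of Definition \ref{definitionofweak}. By the uniqueness part of Theorem \ref{theorem for weak existence} the limit is indistinguishable from the unique weak solution, which then inherits the stronger pathwise regularity by lower semicontinuity of the norms under the weak limits; uniqueness of strong solutions is inherited from uniqueness at the weak level.

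The hard part will be the identification step in the limit: since the drift is only bounded in $U$ (Assumption \ref{newfacilitator}) and not in $H$, recognising the weak $L^2(\Omega\times[0,T];U)$ limit of $\mathcal{P}_n\mathcal{A}(\cdot,\sy^n)$ as $\mathcal{A}(\cdot,\tilde{\sy})$ calls for a monotonicity-type argument exploiting the Cauchy estimate (\ref{therealcauchy1}) together with the compactness of $H\hookrightarrow U$, while the noise requires simultaneous use of the projection estimate (\ref{mu2}) and the $\bar{H}$-valued bound (\ref{111fac}) to ensure that Galerkin truncation of the diffusion is quantitatively harmless in the limit.
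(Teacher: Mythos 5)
The paper does not prove this theorem internally --- it is quoted verbatim from [\cite{goodair2024weak}] Theorem 4.5 --- but your route (Galerkin scheme on $V_n$, It\^{o} formula for $\norm{\cdot}_H^2$ with the coercivity of Assumption \ref{assumptions for uniform bounds2}, absorption of the $\varepsilon\norm{\cdot}_V^2$ contribution from (\ref{uniformboundsassumpt2}) after Burkholder--Davis--Gundy into the $\gamma\norm{\cdot}_V^2$ term, a stopping-time/stochastic Gr\"{o}nwall argument driven by the already-established lower-order bounds, weak-$*$ compactness, and identification of the limit with the unique weak solution of Theorem \ref{theorem for weak existence} so that the $V$-regularity transfers by lower semicontinuity) is essentially the argument that reference carries out, and which Section 3 of this paper reproduces in its concrete setting. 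The one imprecision is your claim that $\mathcal{P}_n\mathcal{A}(t,\cdot)\rightarrow\mathcal{A}(t,\cdot)$ in $U$ uniformly on $V$-bounded sets: the rate (\ref{mu2}) applies only to elements of $\bar{H}$ and $\mathcal{A}$ is merely $U$-valued, so no such uniform statement is available --- but this is harmless, since the limit is in any case pinned down by the uniqueness of weak solutions, exactly as you also observe, making the ``hard identification step'' you flag at the end unnecessary.
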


\begin{proof}
    See [\cite{goodair2024weak}] Theorem 4.5.
\end{proof}

\begin{lemma} \label{continuity lemma 2}
    Let Assumption Sets 1, 2 and 3 hold. Suppose that there exists a continuous bilinear form $\inner{\cdot}{\cdot}_{U \times V}: U \times V \rightarrow \R$  such that for every $f \in H$, $\phi \in V$, $$ \inner{f}{\phi}_{U \times V} = \inner{f}{\phi}_H.$$ In addition, suppose that $\bar{H}$ can be taken as $H$. Then for $\mathbbm{P}-a.e.$ $\omega$, $\sy_{\cdot}(\omega) \in C\left([0,T];H\right)$. 
\end{lemma}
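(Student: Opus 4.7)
The plan is to establish pathwise continuity in $H$ by combining weak continuity of $\sy$ in $H$ with continuity of the map $t \mapsto \norm{\sy_t}_H$. In a Hilbert space the Radon-Riesz property ensures that weak convergence together with convergence of norms implies strong convergence, which will yield the result. Fix an $\omega$ in the full measure set on which $\sy_\cdot$ lies in $C([0,T];U) \cap L^\infty([0,T];H) \cap L^2([0,T];V)$.

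For weak continuity, continuity in $U$ combined with the continuous bilinear form $\inner{\cdot}{\cdot}_{U \times V}$ gives continuity of $t \mapsto \inner{\sy_t}{\phi}_{U \times V}$ for every $\phi \in V$. Given any sequence $t_n \to t$, the essential bound on $\norm{\sy_\cdot}_H$ renders $(\sy_{t_n})$ bounded in $H$; any weakly convergent subsequence $\sy_{t_{n_k}} \rightharpoonup g$ in $H$ must satisfy $g = \sy_t$ by uniqueness of limits in $U$ and the compatibility $\inner{f}{\phi}_{U \times V} = \inner{f}{\phi}_H$ for $f \in H$. This forces $\sy_t \in H$ and weak convergence $\sy_{t_n} \rightharpoonup \sy_t$ in $H$ along the full sequence; density of $V$ in $H$ extends the pairing continuity to arbitrary test elements of $H$.

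For norm continuity, I would apply It\^{o}'s formula to $\norm{\sy_t}_H^2$. Assumption \ref{new assumptionzizzle 1} combined with $\sy_\cdot \in L^\infty([0,T];H)$ ensures $\mathcal{A}(\cdot,\sy_\cdot) \in L^2([0,T];H^*)$ pathwise, while the hypothesis $\bar{H} = H$ together with Assumption \ref{newfacilitator} and $\sy_\cdot \in L^2([0,T];V) \cap L^\infty([0,T];H)$ places $\mathcal{G}(\cdot,\sy_\cdot)$ in $L^2([0,T]; \mathscr{L}^2(\mathfrak{U}; H))$. Since $V \hookrightarrow H$ yields $H^* \hookrightarrow V^*$, the equation (\ref{thespde}) --- originally read in $U$ --- can be interpreted in the Gelfand triple $V \hookrightarrow H \hookrightarrow V^*$, so the classical variational It\^{o} formula delivers
\begin{align*}
\norm{\sy_t}_H^2 = \norm{\sy_0}_H^2 &+ \int_0^t \Bigl[2\inner{\mathcal{A}(s,\sy_s)}{\sy_s}_{H^* \times H} + \sum_{i=1}^\infty \norm{\mathcal{G}_i(s,\sy_s)}_H^2\Bigr]ds \\
&\qquad + 2\int_0^t \inner{\sy_s}{\mathcal{G}(s,\sy_s)\,d\mathcal{W}_s}_H.
\end{align*}
The right-hand side is pathwise continuous in $t$, so $t \mapsto \norm{\sy_t}_H$ is continuous, and combining with the weak continuity above together with the Radon-Riesz property gives $\sy_{t_n} \to \sy_t$ strongly in $H$.

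The main obstacle is justifying the It\^{o} formula rigorously, since the equation is framed in $U$ and the native Gelfand triple $H \hookrightarrow U \hookrightarrow H^*$ is not the one to which the classical variational formula is tailored. The compatibility of $\inner{\cdot}{\cdot}_{U \times V}$ with the $H$-inner product is precisely what allows the transition from the $U$-level duality to the $H^* \times H$-duality required by the variational It\^{o} formula; a careful verification that the $V^*$-valued interpretation of the drift coincides with the $U$-valued one along the trajectory of $\sy$ is the technical core of the argument.
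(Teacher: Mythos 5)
Your strategy --- weak $H$-continuity from $U$-continuity plus the bilinear form, norm continuity from an energy identity, then the Radon--Riesz property --- is essentially the variational-framework argument that the paper delegates to [\cite{goodair2024weak}] Lemma 4.14; indeed the paper later describes that proof as ``posing $u$ in the variational framework \dots with bilinear form'' exactly as you do, so the route is the intended one. Three points need tightening. First, $\sy_{\cdot}(\omega) \in L^{\infty}([0,T];H)$ only bounds $\norm{\sy_t}_H$ for a.e.\ $t$, so you cannot immediately assert that $(\sy_{t_n})$ is bounded in $H$ for an \emph{arbitrary} sequence $t_n \to t$; you must first run the weak-compactness argument along times outside the exceptional null set to conclude that $\sy_t \in H$ with $\norm{\sy_t}_H \leq \mathrm{ess\,sup}_s\norm{\sy_s}_H$ for \emph{every} $t$, and only then treat arbitrary sequences --- as written there is a small circularity. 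Second, in your displayed energy identity the drift term should be paired through $\inner{\mathcal{A}(s,\sy_s)}{\sy_s}_{U \times V}$, the extension of $\inner{\cdot}{\cdot}_H$ furnished by the hypothesis, and not through $\inner{\cdot}{\cdot}_{H^* \times H}$, which in this paper's notation is the pairing of the triple $H \hookrightarrow U \hookrightarrow H^*$ and extends $\inner{\cdot}{\cdot}_U$; since $\mathcal{A}(s,\sy_s)$ is only $U$-valued the two pairings genuinely differ, and it is precisely the $U \times V$ form that realises $U$ as a subspace of $V^*$ in the Gelfand triple $V \hookrightarrow H \hookrightarrow V^*$ you invoke. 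Third, only pathwise (not $L^2(\Omega \times [0,T])$) integrability of $\norm{\sy}_V^2$, $\norm{\mathcal{A}(\cdot,\sy_{\cdot})}_U$ and the Hilbert--Schmidt norm of the noise is available from Definition \ref{definitionofstrong}, so the classical variational It\^{o} formula must be applied after localisation by stopping times. With these repairs the argument is complete and matches the cited proof.
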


\begin{proof}
   See [\cite{goodair2024weak}] Lemma 4.14.
\end{proof}

\section{Appendix II: Maximal Solutions to Nonlinear SPDEs} \label{section maximal solution}

\subsection{Functional Framework} \label{sub functional for local}

Consider the same It\^{o} SPDE (\ref{thespde}), \begin{equation} \nonumber
    \sy_t = \sy_0 + \int_0^t \mathcal{A}(s,\sy_s)ds + \int_0^t\mathcal{G} (s,\sy_s) d\mathcal{W}_s
\end{equation}
which we pose for a triplet of embedded, separable Hilbert Spaces $$\mathscr{V} \hookrightarrow \mathscr{H} \hookrightarrow \mathscr{U}$$ whereby the embeddings are continuous linear injections. We ask that there is a continuous bilinear form $\inner{\cdot}{\cdot}_{\mathscr{U} \times \mathscr{V}}: \mathscr{U} \times \mathscr{V} \rightarrow \R$ such that for $f \in \mathscr{H}$ and $\psi \in \mathscr{V}$, \begin{equation}  \nonumber
    \inner{f}{\psi}_{\mathscr{U} \times \mathscr{V}} =  \inner{f}{\psi}_{\mathscr{H}}.
\end{equation}
The equation (\ref{thespde}) is posed on a time interval $[0,T]$ for arbitrary $T \geq 0$. The mappings $\mathcal{A},\mathcal{G}$ are such that
    $\mathcal{A}:[0,T] \times \mathscr{V} \rightarrow \mathscr{U},
    \mathcal{G}:[0,T] \times \mathscr{V} \rightarrow \mathscr{L}^2(\mathfrak{U};\mathscr{H})$ are measurable. Understanding $\mathcal{G}$ as a mapping $\mathcal{G}: [0,T] \times \mathscr{V} \times \mathfrak{U} \rightarrow \mathscr{H}$, we introduce the notation $\mathcal{G}_i(\cdot,\cdot):= \mathcal{G}(\cdot,\cdot,e_i)$. We further impose the existence of a system of elements $(a_k)$ of $\mathscr{V}$ with the following properties. Let us define the spaces $\mathscr{V}_n:= \textnormal{span}\left\{a_1, \dots, a_n \right\}$ and $\mathcal{P}_n$ as the orthogonal projection to $\mathscr{V}_n$ in $\mathscr{U}$. 
    It is required that the $(\mathcal{P}_n)$ are uniformly bounded in $H$, which is to say that there exists a constant $c$ independent of $n$ such that for all $\phi \in H$, \begin{equation}  \nonumber
        \norm{\mathcal{P}_nf}_\mathscr{H} \leq c\norm{f}_\mathscr{H}.
    \end{equation}
We also suppose that there exists a real valued sequence $(\mu_n)$ with $\mu_n \rightarrow \infty$ such that for any $f \in \mathscr{H}$, \begin{align}
      \nonumber
    \norm{(I - \mathcal{P}_n)f}_\mathscr{U} \leq \frac{1}{\mu_n}\norm{f}_{\mathscr{H}}
\end{align}
where $I$ represents the identity operator in $\mathscr{U}$. Specific bounds on the mappings $\mathcal{A}$ and $\mathcal{G}$ will be imposed in the following subsection. We shall use the same notation of $c,K$ from Subsection \ref{subs functional framework}.

\subsection{Assumptions} \label{assumptionschapter}

 We assume that there exists a $c_{\cdot}$, $K$ and $\gamma > 0$ such that for all $\phi,\psi \in \mathscr{V}$, $\phi^n \in \mathscr{V}_n$, $f \in H$ and $t \in [0,T]$:
 

  \begin{assumption}   \begin{align}
      \nonumber \norm{\mathcal{A}(t,\phi)}^2_\mathscr{U} +\sum_{i=1}^\infty \norm{\mathcal{G}_i(t,\phi)}^2_\mathscr{H} &\leq c_t K_\mathscr{U}(\phi)\left[1 + \norm{\phi}_\mathscr{V}^2\right],\\  \nonumber
     \norm{\mathcal{A}(t,\phi) - \mathcal{A}(t,\psi)}_\mathscr{U}^2 &\leq  c_tK_\mathscr{V}(\phi,\psi)\norm{\phi-\psi}_\mathscr{V}^2,\\  \nonumber
    \sum_{i=1}^\infty \norm{\mathcal{G}_i(t,\phi) - \mathcal{G}_i(t,\psi)}_\mathscr{U}^2 &\leq c_tK_\mathscr{U}(\phi,\psi)\norm{\phi-\psi}_\mathscr{H}^2.
 \end{align}
 \end{assumption}

\begin{assumption} \label{uniform assumpt}
 \begin{align}
    \nonumber  2\inner{\mathcal{P}_n\mathcal{A}(t,\phi^n)}{\phi^n}_\mathscr{H} + \sum_{i=1}^\infty\norm{\mathcal{P}_n\mathcal{G}_i(t,\phi^n)}_\mathscr{H}^2 &\leq c_tK_\mathscr{U}(\phi^n)\left[1 + \norm{\phi^n}_\mathscr{H}^4\right] - \gamma\norm{\phi^n}_\mathscr{V}^2,\\  \nonumber
    \sum_{i=1}^\infty \inner{\mathcal{P}_n\mathcal{G}_i(t,\phi^n)}{\phi^n}^2_\mathscr{H} &\leq c_tK_\mathscr{U}(\phi^n)\left[1 + \norm{\phi^n}_\mathscr{H}^6\right].
\end{align}
\end{assumption}

\begin{assumption} 
\begin{align}
  \nonumber 2\inner{\mathcal{A}(t,\phi) - \mathcal{A}(t,\psi)}{\phi - \psi}_\mathscr{U} &+ \sum_{i=1}^\infty\norm{\mathcal{G}_i(t,\phi) - \mathcal{G}_i(t,\psi)}_\mathscr{U}^2\\  \nonumber &\leq  c_{t}K_\mathscr{U}(\phi,\psi)\left[1 + \norm{\phi}_\mathscr{H}^2 + \norm{\psi}_\mathscr{H}^2\right] \norm{\phi-\psi}_\mathscr{U}^2 - \gamma\norm{\phi-\psi}_\mathscr{H}^2,\\  \nonumber
    \sum_{i=1}^\infty \inner{\mathcal{G}_i(t,\phi) - \mathcal{G}_i(t,\psi)}{\phi-\psi}^2_\mathscr{U} & \leq c_{t} K_\mathscr{U}(\phi,\psi)\left[1 + \norm{\phi}_\mathscr{H}^2 + \norm{\psi}_\mathscr{H}^2\right] \norm{\phi-\psi}_\mathscr{U}^4.
\end{align}
\end{assumption}

\begin{assumption} \label{my2.4}
\begin{align}
    \nonumber 2\inner{\mathcal{A}(t,\phi)}{\phi}_\mathscr{U} + \sum_{i=1}^\infty\norm{\mathcal{G}_i(t,\phi)}_\mathscr{U}^2 &\leq c_tK_\mathscr{U}(\phi)\left[1 +  \norm{\phi}_\mathscr{H}^2\right],\\ \nonumber
    \sum_{i=1}^\infty \inner{\mathcal{G}_i(t,\phi)}{\phi}^2_\mathscr{U} &\leq c_tK_\mathscr{U}(\phi)\left[1 + \norm{\phi}_\mathscr{H}^4\right].
\end{align}
\end{assumption}

\begin{assumption} 
 \begin{equation}  \nonumber
    \inner{\mathcal{A}(t,\phi)-\mathcal{A}(t,\psi)}{f}_\mathscr{U} \leq c_tK_\mathscr{U}(\phi,\psi)(1+\norm{f}_\mathscr{H})\left[1 + \norm{\phi}_\mathscr{V} + \norm{\psi}_\mathscr{V}\right]\norm{\phi-\psi}_\mathscr{H}.
    \end{equation}
\end{assumption}

\subsection{Definitions and Main Result} \label{subsection:notionsofsolution}

We state the definitions and main result.

\begin{definition} \label{v valued local def}
Let $\sy_0:\Omega \rightarrow \mathscr{H}$ be $\mathcal{F}_0-$ measurable. A pair $(\sy,\tau)$ where $\tau$ is a $\mathbbm{P}-a.s.$ positive stopping time and $\sy$ is a process such that for $\mathbbm{P}-a.e.$ $\omega$, $\sy_{\cdot}(\omega) \in C\left([0,T];\mathscr{H}\right)$ and $\sy_{\cdot}(\omega)\mathbbm{1}_{\cdot \leq \tau(\omega)} \in L^2\left([0,T];\mathscr{V}\right)$ for all $T \geq 0$ and with $\sy_{\cdot}\mathbbm{1}_{\cdot \leq \tau}$ progressively measurable in $\mathscr{V}$, is said to be a local strong solution of the equation (\ref{thespde}) if the identity
\begin{equation} \nonumber
    \sy_{t} = \sy_0 + \int_0^{t\wedge \tau} \mathcal{A}(s,\sy_s)ds + \int_0^{t \wedge \tau}\mathcal{G} (s,\sy_s) d\mathcal{W}_s
\end{equation}
holds $\mathbbm{P}-a.s.$ in $\mathscr{U}$ for all $t \geq 0$.
\end{definition}

\begin{definition} \label{V valued maximal definition}
A pair $(\sy,\Theta)$ such that there exists a sequence of stopping times $(\theta_j)$ which are $\mathbbm{P}-a.s.$ monotone increasing and convergent to $\Theta$, whereby $(\sy_{\cdot \wedge \theta_j},\theta_j)$ is a local strong solution of the equation (\ref{thespde}) for each $j$, is said to be a maximal strong solution of the equation (\ref{thespde}) if for any other pair $(\py,\Gamma)$ with this property then $\Theta \leq \Gamma$ $\mathbbm{P}-a.s.$ implies $\Theta = \Gamma$ $\mathbbm{P}-a.s.$.
\end{definition}

\begin{remark}
We do not require $\Theta$ to be finite in this definition, in which case we mean that the sequence $(\theta_j)$ is monotone increasing and unbounded for such $\omega$. 
\end{remark}

\begin{definition} \label{v valued maximal unique}
A maximal strong solution $(\sy,\Theta)$ of the equation (\ref{thespde}) is said to be unique if for any other such solution $(\py,\Gamma)$, then $\Theta = \Gamma$ $\mathbbm{P}-a.s.$ and \begin{equation} \nonumber\mathbbm{P}\left(\left\{\omega \in \Omega: \sy_{t}(\omega) =  \py_{t}(\omega)  \quad \forall t \in [0,\Theta) \right\} \right) = 1. \end{equation}
\end{definition}

\begin{theorem} \label{theorem1}
For any given $\mathcal{F}_0-$ measurable $\sy_0:\Omega \rightarrow \mathscr{H}$, there exists a unique maximal strong solution $(\sy,\Theta)$ of the equation (\ref{thespde}). Moreover at $\mathbbm{P}-a.e.$ $\omega$ for which $\Theta(\omega)<\infty$, we have that \begin{equation} \nonumber \sup_{r \in [0,\Theta(\omega))}\norm{\sy_r(\omega)}_\mathscr{U}^2 + \int_0^{\Theta(\omega)}\norm{\sy_r(\omega)}_\mathscr{H}^2dr = \infty\end{equation}
and in consequence for any $\mathbbm{P}-a.s.$ positive stopping time $\tau$ such that $$\sup_{r \in [0,\tau(\omega))}\norm{\sy_r(\omega)}_\mathscr{U}^2 + \int_0^{\tau(\omega)}\norm{\sy_r(\omega)}_\mathscr{H}^2dr < \infty $$
$\mathbbm{P}-a.s.$, $(\sy_{\cdot \wedge \tau}, \tau)$ is a local strong solution of the equation (\ref{thespde}).
\end{theorem}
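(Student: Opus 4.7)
The strategy is the classical truncation-exhaustion argument, reducing the maximal existence problem to the global result of Theorem \ref{theorem for strong existence} from Appendix I. Fix $R > 0$ and a smooth cutoff $\chi_R : [0,\infty) \to [0,1]$ with $\chi_R \equiv 1$ on $[0,R]$, $\chi_R \equiv 0$ on $[2R, \infty)$. For a candidate process $\psi$ introduce
\begin{equation*}
    N_t(\psi) := \sup_{s \in [0,t]}\norm{\psi_s}_{\mathscr{U}}^2 + \int_0^t\norm{\psi_r}_{\mathscr{H}}^2\,dr,
\end{equation*}
and consider the equation driven by $\chi_R(N_{t}(\sy))\mathcal{A}(t,\sy_t)$ and $\chi_R(N_{t}(\sy))\mathcal{G}(t,\sy_t)$. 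Since the cutoff is active only where the trajectory has an a priori bound, the local estimates of Subsection \ref{assumptionschapter} upgrade to the global bounds of Assumption Sets 1--3 of Appendix I for the triple $\mathscr{V} \hookrightarrow \mathscr{H} \hookrightarrow \mathscr{U}$; in particular the local $\mathscr{U}$-monotonicity, whose coefficient has the form $c_t K_{\mathscr{U}}(\phi,\psi)[1 + \norm{\phi}_{\mathscr{H}}^2 + \norm{\psi}_{\mathscr{H}}^2]$, becomes uniformly bounded on the support of the cutoff. Theorem \ref{theorem for strong existence} then produces a unique strong solution $\sy^R$ of the truncated equation, with the pathwise $\mathscr{H}$-continuity furnished by Lemma \ref{continuity lemma 2} invoked on the bilinear form postulated in Subsection \ref{sub functional for local}.

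Define the exit time $\tau_R := \inf\{t : N_t(\sy^R) \geq R\} \wedge T$. On $[0,\tau_R]$ the truncation is inactive, so $(\sy^R_{\cdot \wedge \tau_R}, \tau_R)$ is a local strong solution of (\ref{thespde}) in the sense of Definition \ref{v valued local def}. For $R < R'$, applying It\^o's formula to $\norm{\sy^R - \sy^{R'}}_{\mathscr{U}}^2$ stopped at $\tau_R \wedge \tau_{R'}$, together with the local $\mathscr{U}$-monotonicity, yields a pathwise Gr\"onwall estimate with coefficient $c_t(1 + \norm{\sy^R}_{\mathscr{H}}^2 + \norm{\sy^{R'}}_{\mathscr{H}}^2)$, integrable on this interval, which forces $\sy^R \equiv \sy^{R'}$ on $[0,\tau_R \wedge \tau_{R'}]$. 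Consequently $(\tau_R)$ is monotone increasing $\mathbbm{P}$-a.s., and one sets $\Theta := \lim_R \tau_R$ and $\sy_t := \sy^R_t$ for $t \leq \tau_R$; this provides the candidate pair $(\sy,\Theta)$ with the approximating sequence $(\tau_R)$ required by Definition \ref{V valued maximal definition}.

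For the blow-up characterisation, suppose $\Theta(\omega) < \infty$ yet
\begin{equation*}
 M(\omega) := \sup_{r \in [0,\Theta(\omega))}\norm{\sy_r(\omega)}_{\mathscr{U}}^2 + \int_0^{\Theta(\omega)}\norm{\sy_r(\omega)}_{\mathscr{H}}^2\,dr < \infty.
\end{equation*}
Then $\tau_R(\omega) = \Theta(\omega)$ for every integer $R > M(\omega)$, and the pathwise $\mathscr{H}$-continuity extracts an $\mathscr{H}$-valued limit $\sy_{\Theta(\omega)}$; restarting the construction from this data produces a strict extension of $\sy$, contradicting maximality. This yields the blow-up identity, from which maximality and uniqueness of $(\sy,\Theta)$ in the sense of Definition \ref{v valued maximal unique} follow by comparison with any competitor $(\py,\Gamma)$ using the same local uniqueness argument across the stopping times $\tau_R$. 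The final assertion is then immediate: any $\mathbbm{P}$-a.s.\ positive stopping time $\tau$ with $N_\tau(\sy) < \infty$ a.s.\ must satisfy $\tau \leq \Theta$ a.s.\ by contraposition of the blow-up identity, and $(\sy_{\cdot \wedge \tau},\tau)$ coincides with $(\sy^R_{\cdot \wedge \tau},\tau)$ on the localising event $\{N_\tau(\sy) \leq R\}$, so it is a local strong solution.

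The main obstacle I anticipate is rigorously placing the path-dependent truncated equation into the framework of Appendix I; specifically, verifying Assumption \ref{therealcauchy assumptions} for the truncated coefficients requires the cutoff $\chi_R$ to interact cleanly both with the $\mathscr{H}$-dependent monotonicity coefficients and with the Galerkin projections $\mathcal{P}_n$ implicit in the global theorem. Should the direct truncation prove delicate at this interface, the fallback I would pursue is to construct $\sy^R$ directly through a Galerkin scheme inside $\mathscr{V}_n$ with its own exit times $\tau_R^n$, derive uniform estimates via Assumption \ref{uniform assumpt}, and pass to the limit in $n$ and $R$ simultaneously along the lines of the proof of Theorem \ref{theorem for strong existence}, invoking the local monotonicity of Paragraph 2 at the limit stage to secure both uniqueness and pathwise $\mathscr{H}$-continuity across $\tau_R \nearrow \Theta$.
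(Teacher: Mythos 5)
First, a point of comparison: the paper does not prove Theorem \ref{theorem1} at all — it is imported verbatim from [\cite{goodair2024improved}] Theorem 2.9 — so there is no internal argument to measure you against, only the question of whether your proposal would work. It has a genuine gap at its central step. You claim that multiplying the coefficients by $\chi_R(N_t(\sy))$ ``upgrades'' the local assumptions of Subsection \ref{assumptionschapter} to Assumption Sets 1--3 of Appendix I, so that Theorem \ref{theorem for strong existence} applies. This fails twice over. The truncated coefficients depend on the whole history of $\sy$ through $N_t(\sy)$, whereas the assumptions of Appendix I are pointwise bounds on measurable maps $\mathcal{A}(t,\phi)$, $\mathcal{G}(t,\phi)$ for an \emph{arbitrary} $\phi \in V$ at a fixed time; a path-dependent equation is not of the form (\ref{thespde}) and Theorem \ref{theorem for strong existence} simply does not speak to it. More fundamentally, even a state-dependent cutoff cannot convert Assumption \ref{my2.4}, which reads $2\inner{\mathcal{A}(t,\phi)}{\phi}_{\mathscr{U}} + \sum_i \norm{\mathcal{G}_i(t,\phi)}_{\mathscr{U}}^2 \leq c_t K_{\mathscr{U}}(\phi)\left[1 + \norm{\phi}_{\mathscr{H}}^2\right]$ with no negative term, into (\ref{uniformboundsassumpt1actual}), which demands a bound of the form $c_t\left[1+\norm{\phi}_U^2\right] - \gamma\norm{\phi}_H^2$: multiplying $\mathcal{A}$ and $\mathcal{G}$ by a scalar in $[0,1]$ can neither create the dissipative term $-\gamma\norm{\phi}_H^2$ nor remove the superlinear growth in $\norm{\phi}_{\mathscr{H}}$, and your cutoff, being a function of $\sup_s\norm{\sy_s}_{\mathscr{U}}^2 + \int_0^t\norm{\sy_r}_{\mathscr{H}}^2\,dr$, does not control the instantaneous $\mathscr{H}$-norm of the generic argument in any case. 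The reduction to the global theorem therefore collapses.

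The blow-up step has a second gap. Finiteness of $M(\omega) = \sup_{r\in[0,\Theta(\omega))}\norm{\sy_r(\omega)}_{\mathscr{U}}^2 + \int_0^{\Theta(\omega)}\norm{\sy_r(\omega)}_{\mathscr{H}}^2\,dr$ bounds only the time \emph{integral} of the $\mathscr{H}$-norm, not its supremum, so ``pathwise $\mathscr{H}$-continuity extracts an $\mathscr{H}$-valued limit $\sy_{\Theta(\omega)}$'' is unjustified; without an element of $\mathscr{H}$ to restart from, the extension-by-contradiction does not run. The genuine content of the second assertion of Theorem \ref{theorem1} is precisely the a priori estimate showing that finiteness of this weak functional propagates to $\sup_{[0,\tau]}\norm{\sy}_{\mathscr{H}}^2 + \int_0^{\tau}\norm{\sy}_{\mathscr{V}}^2 < \infty$, obtained from Assumption \ref{uniform assumpt} and a stochastic Gr\"{o}nwall argument on the Galerkin system in which the stochastic integrand is controlled only up to stopping times where the $\mathscr{U}$- and $L^2_t\mathscr{H}$-quantities are bounded — you assume this propagation rather than prove it. Your ``fallback'' (Galerkin scheme in $\mathscr{V}_n$ with exit times, uniform estimates from Assumption \ref{uniform assumpt}, limit passage via the local monotonicity) is in fact the route taken in [\cite{goodair2024improved}], but as written it is a one-sentence sketch rather than an argument, so it does not close the gap.
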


\begin{proof}
    See [\cite{goodair2024improved}] Theorem 2.9.
\end{proof}

\section{Appendix III: Infinite Dimensional It\^{o}-Stratonovich Conversion} \label{Appendix II}

This theory is taken from [\cite{goodair2022stochastic}] Subsections 2.2 and 2.3, and is provided here for simplicity to apply in Subsection \ref{subs strong strat}. We work with a quartet of embedded Hilbert Spaces $$V \hookrightarrow H \hookrightarrow U \hookrightarrow X$$ where the embeddings are assumed to be continuous linear injections. We start from an SPDE \begin{equation} \label{stratoSPDE}
    \sy_t = \sy_0 + \int_0^t \mathcal{Q}\sy_sds + \int_0^t\mathcal{G}\sy_s \circ d\mathcal{W}_s.
\end{equation}
where the mappings $\mathcal{Q}$, $\mathcal{G}$ satisfy the following conditions, with the general operator $\tilde{K}:H \rightarrow \R$ defined by $$\tilde{K}(\phi):= c\left(1 + \norm{\phi}_U^p + \norm{\phi}_H^q\right)$$ for any constants $c,p,q$ independent of $\phi$.
 \begin{assumption} \label{Qassumpt}
$\mathcal{Q}: V \rightarrow U$ is measurable and for any $\phi \in V$, $$\norm{\mathcal{Q}\phi}_U \leq \tilde{K}(\phi)[1 + \norm{\phi}_V^2].$$
 \end{assumption}
 
\begin{assumption} \label{Gassumpt}
$\mathcal{G}$ is understood as a measurable mapping \begin{align*}
    \mathcal{G}: V \rightarrow \mathscr{L}^2(\mathfrak{U};H), \qquad
     \mathcal{G}: H \rightarrow \mathscr{L}^2(\mathfrak{U};U), \qquad
       \mathcal{G}: U \rightarrow \mathscr{L}^2(\mathfrak{U};X)
\end{align*}
defined over $\mathfrak{U}$ by its action on the basis vectors $\mathcal{G}(\cdot, e_i):= \mathcal{G}_i(\cdot).$ In addition each $\mathcal{G}_i$ is linear and there exists constants $c_i$ such that for all $\phi \in V$, $\psi \in H$, $\eta \in U$:
\begin{align*}
    \norm{\mathcal{G}_i\phi}_{H} \leq c_i \norm{\phi}_V, \qquad
    \norm{\mathcal{G}_i\psi}_{U} \leq c_i \norm{\psi}_H, \qquad
    \norm{\mathcal{G}_i\eta}_{X} \leq c_i \norm{\eta}_U, \qquad
    \sum_{i=1}^\infty c_i^2 < \infty.
\end{align*}
\end{assumption}

In this setting, we have the following result ([\cite{goodair2022stochastic}] Theorem 2.3.1).

\begin{theorem} \label{theorem for ito strat conversion}
    Suppose that $\sy$ is a process whereby for $\mathbbm{P}-a.e.$ $\omega$, $\sy_{\cdot}(\omega) \in C\left([0,T];H\right)\cap L^2\left([0,T];V\right)$, $\sy$ is progressively measurable in $V$, and moreover satisfies the identity
\begin{equation} \nonumber
    \sy_{t} = \sy_0 + \int_0^{t} \left(\mathcal{Q} + \frac{1}{2}\sum_{i=1}^\infty \mathcal{G}_i^2\right)\sy_sds + \int_0^{t }\mathcal{G}\sy_s d\mathcal{W}_s
\end{equation}
$\mathbbm{P}-a.s.$ in $U$ for all $t \in [0,T]$. Then $\sy$ satisfies the identity $$\sy_{t} = \sy_0 + \int_0^{t} \mathcal{Q}\sy_sds + \int_0^{t}\mathcal{G}\sy_s \circ d\mathcal{W}_s $$
$\mathbbm{P}-a.s.$ in $X$ for all $t \geq 0$.
\end{theorem}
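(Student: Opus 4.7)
The plan is to adopt the standard semimartingale definition of the Stratonovich integral and then identify the It\^{o}-Stratonovich correction with the drift already present in the hypothesised equation. For the $X$-valued local martingale $M_t := \int_0^t \mathcal{G}\sy_s\, d\mathcal{W}_s$, one declares
$$\int_0^t \mathcal{G}\sy_s \circ d\mathcal{W}_s := \int_0^t \mathcal{G}\sy_s\, d\mathcal{W}_s + \tfrac{1}{2}[\mathcal{G}\sy,\mathcal{W}]_t,$$
where the $X$-valued cross-variation is built from summing the scalar brackets $[\mathcal{G}_i\sy, W^i]_t$ over $i \in \N$. The theorem then reduces to proving the identity
$$[\mathcal{G}\sy,\mathcal{W}]_t = \int_0^t \sum_{i=1}^\infty \mathcal{G}_i^2 \sy_s\, ds$$
as $X$-valued processes; substituting this back into the It\^{o} form and cancelling the $\tfrac{1}{2}\sum_i \mathcal{G}_i^2$ drift immediately yields the Stratonovich identity in $X$.

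The central step is to recognise each $\mathcal{G}_i\sy$ as an $X$-valued semimartingale with an explicitly computable martingale part. Since $\mathcal{G}_i:U \to X$ is bounded and linear by Assumption \ref{Gassumpt}, and since $\sy$ satisfies the It\^{o} SPDE as an equality in $U$ with paths continuous in $H$, bounded linear operators commute with both Bochner and It\^{o} integrals, producing
$$\mathcal{G}_i\sy_t = \mathcal{G}_i\sy_0 + \int_0^t \mathcal{G}_i\left(\mathcal{Q} + \tfrac{1}{2}\sum_{j=1}^\infty \mathcal{G}_j^2\right)\sy_s\, ds + \sum_{j=1}^\infty \int_0^t \mathcal{G}_i\mathcal{G}_j\sy_s\, dW^j_s$$
as an identity in $X$. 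Reading off the cross-variation of this $X$-valued semimartingale with the scalar Brownian motion $W^i$ picks out only the $j=i$ term in the martingale part, giving the Bochner integral $\int_0^t \mathcal{G}_i^2 \sy_s\, ds$, which by the second and third conditions of Assumption \ref{Gassumpt} is valued in $U \hookrightarrow X$.

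Summing over $i$, the chain $\norm{\mathcal{G}_i^2\sy_s}_X \leq c_i\norm{\mathcal{G}_i\sy_s}_U \leq c_i^2\norm{\sy_s}_H$ from Assumption \ref{Gassumpt} combined with $\sy \in C([0,T];H)$ and $\sum_i c_i^2 < \infty$ gives absolute convergence of $\sum_i \int_0^t \mathcal{G}_i^2 \sy_s\, ds$ in $X$, uniformly on $[0,T]$. The same estimate underpins the convergence of the double sum $\sum_{i,j}\int_0^t \mathcal{G}_i\mathcal{G}_j \sy_s\, dW^j_s$ in $X$, which is needed so that the cross-variation computation above is well-posed. Once these convergences are pinned down, the desired identification of $[\mathcal{G}\sy,\mathcal{W}]_t$ follows, and the conversion is complete.

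The principal obstacle I expect lies in rigorously formalising the $X$-valued cross-variation against the cylindrical Brownian motion, which strictly lives in the enlargement $\mathfrak{U}'$ of $\mathfrak{U}$. One must give coherent meaning to $[\mathcal{G}\sy,\mathcal{W}]_t$ by summing the scalar brackets $[\mathcal{G}_i\sy, W^i]_t$ and verify that the resulting process is independent of the enlargement and agrees in $X$ with the drift read off from the $\mathcal{G}_i$-transformed semimartingale above. Once this infinite-dimensional definition is in place, the remaining arguments are essentially bookkeeping driven by the linearity, boundedness, and summability properties of the family $(\mathcal{G}_i)$ supplied by Assumption \ref{Gassumpt}.
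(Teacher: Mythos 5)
Your proposal is correct and follows essentially the route the paper relies on: the paper gives no proof of this theorem, deferring to [\cite{goodair2022stochastic}] Theorem 2.3.1, where the Stratonovich integral is defined exactly as the It\^{o} integral plus half the sum of the $X$-valued brackets $[\mathcal{G}_i\sy,W^i]$, and the conversion is carried out by applying the bounded operators $\mathcal{G}_i:U\rightarrow X$ to the It\^{o} identity in $U$ and reading off the $j=i$ term of the resulting martingale part, just as you do. Your summability checks via $\norm{\mathcal{G}_i^2\sy_s}_X\leq c_i^2\norm{\sy_s}_H$ and $\sum_i c_i^2<\infty$ are precisely the estimates needed to make the pointwise limit $\frac{1}{2}\sum_i\mathcal{G}_i^2$ and the bracket identification rigorous.
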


The mapping $\frac{1}{2}\sum_{i=1}^\infty \mathcal{G}_i^2$ is understood as a pointwise limit, which is justified in [\cite{goodair2022stochastic}] Subsection 2.3.

\section{Appendix IV: Explosion of Galerkin Projections} \label{last appendix}





To illustrate a difficulty of high order estimates on the bounded domain, we prove the explosion of the Galerkin Projections when not restricted to a space satisfying the boundary condition. This was mentioned in the introduction. The result holds true for the no-slip condition as well, even including the case $k = 1$. 

\begin{lemma} \label{lemma for explosion of galerkin}
    Fix $k \in \N$ with $k \geq 2$. For every constant $c$, there exists an $n \in N$ and an $f \in W^{k,2} \cap L^2_{\sigma}$ such that $$ \norm{\bar{\mathcal{P}}_nf}_{W^{k,2}} > c\norm{f}_{W^{k,2}}.$$
\end{lemma}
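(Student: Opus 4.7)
I would argue by contradiction. Suppose there were a single constant $c$ such that $\|\bar{\mathcal{P}}_n f\|_{W^{k,2}} \leq c\|f\|_{W^{k,2}}$ for every $n\in\N$ and every $f \in W^{k,2} \cap L^2_\sigma$. The goal is to derive from this that \emph{every} such $f$ must satisfy the full Navier boundary condition, a conclusion which is visibly false. The engine of the argument is weak compactness in $W^{k,2}$ combined with the fact that, for $k \geq 2$, the finite-dimensional spaces $\bar{V}_n$ are contained in the closed subspace $\bar{W}^{2,2}_\alpha$.

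Fix an arbitrary $f \in W^{k,2} \cap L^2_\sigma$. By the assumed uniform bound and reflexivity of $W^{k,2}$, a subsequence $(\bar{\mathcal{P}}_{n_j} f)$ converges weakly in $W^{k,2}$ to some $\tilde f$. Since $(\bar a_k)$ is an orthonormal basis of $L^2_\sigma$ and $f \in L^2_\sigma$, we also have $\bar{\mathcal{P}}_n f \to f$ strongly in $L^2$. Weak $W^{k,2}$ convergence entails weak $L^2$ convergence, so uniqueness of weak limits in $L^2$ forces $\tilde f = f$. Moreover, since $k \geq 2$, Lemma \ref{eigenfunctions for navier} places each $\bar a_j$ in $\bar{W}^{2,2}_\alpha$, and by linearity of the Navier condition the entire span $\bar V_n$ is contained in $\bar{W}^{2,2}_\alpha$. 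The continuous embedding $W^{k,2} \hookrightarrow W^{2,2}$ transfers the weak convergence $\bar{\mathcal{P}}_{n_j} f \rightharpoonup f$ to $W^{2,2}$, and Remark \ref{first labelled remark} tells us that $\bar{W}^{2,2}_\alpha$ is closed, hence weakly closed, in $W^{2,2}$. Consequently $f \in \bar{W}^{2,2}_\alpha$ for every $f \in W^{k,2}\cap L^2_\sigma$.

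The remaining task is to exhibit a single $f \in W^{k,2}\cap L^2_\sigma$ lying outside $\bar{W}^{2,2}_\alpha$, which is where the only real work sits. I would use a stream-function construction: pick $\psi \in C^\infty(\bar{\mathscr{O}};\R)$ vanishing on $\partial \mathscr{O}$ and set $f := (-\partial_2\psi, \partial_1\psi)$. Then $f$ is smooth and divergence-free, and a short boundary computation gives $f\cdot\mathbf{n} = -\partial_\iota\psi = 0$ and $f\cdot\iota = \partial_\mathbf{n}\psi$ on $\partial\mathscr O$, so $f \in L^2_\sigma$. The Navier condition reduces to a linear relation between $\partial_\mathbf{n}\psi$ and the second derivatives of $\psi$ along the boundary; since the boundary jets of $\psi$ can be prescribed independently, one can easily arrange $2(Df)\mathbf{n}\cdot\iota + \alpha f\cdot\iota \not\equiv 0$ on $\partial\mathscr O$, producing the required contradiction. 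The main obstacle is thus the construction of such an $f$, and it is mild: the Navier condition imposes a genuine additional constraint on $\nabla f|_{\partial \mathscr{O}}$ beyond $f\cdot\mathbf n = 0$, so "most" tangentially-flowing divergence-free vector fields fail it.
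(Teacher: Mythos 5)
Your argument is essentially the paper's own proof: assume a uniform bound, extract a weakly convergent subsequence landing in the weakly closed subspace $\bar{W}^{2,2}_{\alpha}$ (since each $\bar{V}_n \subseteq \bar{W}^{2,2}_{\alpha}$), identify the weak limit with $f$ via the strong $L^2_{\sigma}$ convergence, and contradict $f \notin \bar{W}^{2,2}_{\alpha}$. The only difference is that you additionally construct an explicit $f \in W^{k,2}\cap L^2_{\sigma}\setminus \bar{W}^{2,2}_{\alpha}$ via a stream function, a step the paper leaves implicit by simply taking ``any'' such $f$; your construction is correct and a reasonable supplement.
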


\begin{proof}
    Take any $f \in W^{k,2} \cap L^2_{\sigma}$ which is not in $\bar{W}^{2,2}_{\alpha}$, in particular it does not satisfy the Navier boundary conditions. We assume for a contradiction that there exists a $c$ for which for all $n \in \N$, $$ \norm{\bar{\mathcal{P}}_nf}_{W^{k,2}} \leq c\norm{f}_{W^{k,2}}.$$
    In particular $$ \norm{\bar{\mathcal{P}}_nf}_{W^{2,2}} \leq c\norm{f}_{W^{k,2}}$$ so the sequence $(\bar{\mathcal{P}}_nf)$ is uniformly bounded in $\bar{W}^{2,2}_{\alpha}$ and thus admits a weakly convergent subsequence in $\bar{W}^{2,2}_{\alpha}$. This subsequence is also weakly convergent in $L^2_{\sigma}$ to the same limit, due to the embedding of $\bar{W}^{2,2}_{\alpha}$ into $L^2_{\sigma}$. However as $f \in L^2_{\sigma}$ then $(\bar{\mathcal{P}}_nf)$ converges to $f$ strongly in $L^2_{\sigma}$ hence weakly as well, so by uniqueness of limits in the weak topology then $f \in \bar{W}^{2,2}_{\alpha}$ which is a contradiction.
\end{proof}

\addcontentsline{toc}{section}{References}
\bibliographystyle{newthing}
\bibliography{myBibBib}

\end{document}